\theoremstyle{plain}
\newtheorem{mainthm}{Theorem}
\newtheorem{maincor}[mainthm]{Corollary}
\newtheorem{mainprop}[mainthm]{Proposition}
\newtheorem{thm}{Theorem}[subsection]
\newtheorem{cor}[thm]{Corollary}
\newtheorem{lem}[thm]{Lemma}
\newtheorem{prop}[thm]{Proposition}
\newtheorem{ex-t}[thm]{Example}
\theoremstyle{definition}
\newtheorem*{assumption-L}{Assumption $\mathscr{L}$}
\theoremstyle{remark}
\newtheorem{rem}[thm]{Remark}
\newtheorem*{remnonum}{Remark}
\newtheorem*{remsnonum}{Remarks}
\newcommand{\cntrs}{\setcounter{thm}{0}
  \renewcommand{\thethm}{\thesection.\Alph{thm}}}
\newcommand{\cntrsb}{\setcounter{thm}{0}
  \renewcommand{\thethm}{\thesubsection.\Alph{thm}}}
\newcommand{\Qed}{\hfill \qedsymbol \medskip}
\newcommand{\rrightarrow}{\mathrel{\mathrlap{\longrightarrow}\mkern1mu
    \rightarrow \;}}
\newcommand{\cmred}{}
\newcommand{\pbred}{}
\begin{document}

\title{The Lagrangian Cubic Equation}

\author{Paul Biran}

\author{Cedric Membrez} \thanks{The second author
  was partially supported by the Swiss National Science Foundation
(grant number 200021\_134747).}

\date{\today}

\address{Paul Biran, Department of Mathematics, ETH Z\"urich,
  R\"amistrasse 101, 8092 Z\"urich, Switzerland}
\email{biran@math.ethz.ch}

\address{Cedric Membrez, School of Mathematical Sciences, Tel-Aviv
  University, Tel-Aviv 69978, Israel}
\email{cedric.membrez@math.ethz.ch}

\bibliographystyle{alphanum}

\maketitle

%
%

\begin{abstract}
   Let $M$ be a closed symplectic manifold and $L \subset M$ a
   Lagrangian submanifold.  Denote by $[L]$ the homology class induced
   by $L$ viewed as a class in the quantum homology of $M$. The
   present paper is concerned with properties and identities involving
   the class $[L]$ in the quantum homology ring. We also study the
   relations between these identities and invariants of $L$ coming
   from Lagrangian Floer theory. We pay special attention to the case
   when $L$ is a Lagrangian sphere.
\end{abstract}

\section{Introduction and main results} \cntrs
\label{s:intro}

Let $M^{2n}$ be a closed symplectic $2n$-dimensional manifold. Assume
further that $M$ is monotone with minimal Chern number $C_M$
(see~\S\ref{sb:monotone} below for the definitions). Denote by $QH(M)$
the quantum homology of $M$ with coefficients in the ring
$\mathbb{\mathbb{Z}}[q]$, where the degree of the variable $q$ is
$|q|=-2$. Denote by $*$ the quantum product on $QH(M)$ and for a class
$a \in QH(M)$, $k \in \mathbb{N}$, we write $a^{*k}$ for the $k$'th
power of $a$ with respect to this product.

Let $S \subset M$ be an oriented Lagrangian $n$-sphere. Denote by $[S]
\in QH_n(M)$ the homology class represented by $S$ in the quantum
homology of $M$. Our first result shows that $[S]$ always satisfies a
cubic or quadratic equation of a very specific type:
\begin{mainthm} \label{t:cubic-eq-sphere}
   \begin{enumerate}
     \item \label{i:n-odd} If $n=$~odd then $[S]*[S]=0$.
     \item Assume $n=$~even. Then:
      \begin{enumerate}[(i)]
        \item \label{i:div} If $C_M | n$ then there exists a unique
         $\gamma_S \in \mathbb{Z}$ such that $[S]^{*3} = \gamma_S [S]
         q^{n}$. If we assume in addition that $2C_M \centernot| n$,
         then $\gamma_S$ is divisible by $4$, while if $2C_M | n$ then
         $\gamma_S$ is either $0 (\bmod \,4)$ or $1 (\bmod \,4)$.
        \item \label{i:ndiv} If $C_M \centernot| n$ then $[S]^{*3} =
         0$.
      \end{enumerate}
   \end{enumerate}
\end{mainthm}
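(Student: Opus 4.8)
The plan is to reduce the whole computation to the self-Floer theory of $S$, encoded in its Lagrangian quantum homology $QH(S)$, and to transport it to $QH(M)$ through the quantum inclusion $i_S\colon QH(S)\to QH(M)$. First I would record the structure of $QH(S)$. Since $S\cong S^n$ is simply connected for $n\geq 2$, its minimal Maslov number is $N_S=2C_M$, and a dimension count shows that the pearl differential of a perfect Morse function on $S$ vanishes identically; hence $QH(S)\cong H_*(S;\mathbb{Z})\otimes\mathbb{Z}[\tau]$, with $|\tau|=-2C_M$, free on the unit $u=[S]_{QH(S)}$ (degree $n$) and the point class $p$ (degree $0$). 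Writing $s=n/(2C_M)$, the only product not forced by unitality lands in degree $-n$, so
\[
 p*p \;=\; a\,\tau^{s}\,p \;+\; b\,\tau^{2s}\,u,
\]
where $a,b\in\mathbb{Z}$ are disk-counting invariants, the term $a\tau^{s}p$ present only when $2C_M\mid n$ and $b\tau^{2s}u$ only when $C_M\mid n$. The map $i_S$ is a degree-preserving $\mathbb{Z}[q]$-module homomorphism with $i_S(u)=[S]$, $i_S(p)=[\mathrm{pt}]$ and $i_S(\tau)=q^{C_M}$.

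The crux is the structural identity, which I would establish from the module action of $QH(M)$ on $QH(S)$ together with quantum Poincaré duality: quantum multiplication by $[S]$ preserves $\mathrm{Im}(i_S)$ and satisfies $[S]*i_S(\alpha)=i_S(e_S*\alpha)$, where $e_S\in QH_0(S)$ is the quantum Euler class of $S$. Computing $e_S$ relative to the \emph{quantum} dual basis is the technical heart of the argument: for $n$ odd the graded symmetry of the quantum intersection form forces $e_S=0$, while for $n$ even one finds $e_S=2p-a\,\tau^{s}u$, the correction $-a\tau^{s}u$ being exactly the deviation of the quantum dual basis from the classical one. I expect \emph{this evaluation of the correction term, and in particular the verification that its coefficient is precisely $-a$, to be the main obstacle}, since it is what kills the point-class contribution to $[S]^{*3}$ and produces the arithmetic in the divisibility statement.

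Granting this, part (1) is immediate: for $n$ odd, $[S]*[S]=[S]*i_S(u)=i_S(e_S*u)=i_S(e_S)=0$. For $n$ even I would first compute $[S]*[S]=i_S(e_S)=2[\mathrm{pt}]-a\,q^{n/2}[S]$, and then cube by iterating the structural identity. Concretely, multiplication by $e_S$ on the rank-two module $QH(S)$ satisfies its characteristic equation over $\mathbb{Z}[\tau]$; feeding $u$ into this relation and applying $i_S$ yields, after substituting the value of $[S]*[S]$, that the $[\mathrm{pt}]\,q^{n/2}$-terms cancel (here the precise form $e_S=2p-a\tau^{s}u$ is used) and
\[
 [S]^{*3} \;=\; (a^2+4b)\,[S]\,q^{n},
\]
so $\gamma_S=a^2+4b$ is the required, manifestly unique, integer.

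Finally I would read off the divisibility from $\gamma_S=a^2+4b$ according to which terms of $p*p$ survive. If $C_M\nmid n$ then $s,2s\notin\mathbb{Z}$, so $a=b=0$ and $[S]^{*3}=0$, giving case (ii). If $C_M\mid n$ but $2C_M\nmid n$ then $s\notin\mathbb{Z}$ forces $a=0$, whence $\gamma_S=4b\equiv 0\pmod 4$. If $2C_M\mid n$ then both terms are present and $\gamma_S=a^2+4b\equiv a^2\equiv 0\text{ or }1\pmod 4$, completing case (i). The remaining case $n=1$ is odd and handled directly: $[S]*[S]\in QH_0(M)$ has vanishing classical part $\chi(S^1)[\mathrm{pt}]=0$, and the same Euler-class vanishing gives $[S]*[S]=0$.
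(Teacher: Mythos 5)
Your reduction to the rank-two Lagrangian quantum homology $QH(S)$, followed by transport through the quantum inclusion $i_S$ and the module identity $[S]*i_S(\alpha)=i_S(([S]*e_S)*\alpha)$, is exactly the paper's strategy, and three of the four cases come out essentially as in the paper: for $n$ odd and for $C_M \nmid n$ the relevant groups vanish (or inject into $H_0$ via the augmentation), and for $C_M \mid n$, $2C_M \nmid n$ your computation $[S]^{*3}=i_S\bigl(e_S*e_S\bigr)=4b\,[S]q^n$ is the paper's divisibility-by-$4$ argument in slightly different clothing. (One inaccuracy: your claim that the pearl differential of a perfect Morse function on $S^n$ vanishes for all $n\geq 2$ fails when $n$ is odd and $2C_M \mid n+1$, where $dx_0$ can hit $x_n t^{(n+1)/(2C_M)}$, cf.\ Proposition~\ref{p:HF-lag-spheres}; this is harmless for you, since in that degenerate case $QH_0(S;\Lambda)=0$ and the vanishing $[S]*e_S=0$ you need holds trivially.)

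The genuine gap is the case $2C_M \mid n$, and it is precisely the step you flagged as the main obstacle and then granted: the evaluation $e_S = 2p - a\,\tau^{s}u$. Writing $[S]*u = \xi p + \mu\, u\tau^{s}$ with $\xi=\pm 2$, your claim $\mu=-a$ is \emph{equivalent} to the vanishing of the coefficient $\sigma_S$ of $[S]^{*2}q^{n/2}$ in the cubic equation; without it one does not even get existence of $\gamma_S$, because $[S]^{*2}=\xi\, i_S(p)+\mu\,[S]q^{n/2}$ contains the point class of $M$ and is therefore not proportional to $[S]q^{n/2}$, so a nonzero quadratic term cannot be absorbed into $\gamma_S[S]q^n$. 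Your proposed justification---``quantum Poincar\'e duality''/the deviation of the quantum dual basis---amounts to asserting a Cardy-type relation, namely that the closed-open image $[S]*e_S$ of $[S]=i_S(e_S)$ equals the quantum Euler class $\sum_i x_i * x_i^{\vee} = 2p - a\,u\tau^{s}$; no such identity is established in the framework the paper uses (pearl homology over $\mathbb{Z}$), and you do not sketch a proof of it. The paper obtains the crucial vanishing $\sigma_S=0$ by an entirely different, geometric mechanism: it first proves the general cubic equation (Theorem~\ref{t:cubic-eq}, via the quadratic algebra $QH_n(S)$), and then applies the Dehn twist $\varphi$ along $S$, which by the Picard-Lefschetz formula satisfies $\varphi_*[S]=-[S]$ for $n$ even; pushing the cubic equation through $\varphi_*$ and comparing kills the quadratic term (Corollary~\ref{c:sig=0}). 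That this geometric input is not known to be dispensable is made explicit in the paper's remarks: for $\mathbb{Z}$-homology spheres with $2C_M\mid n$---where your duality argument, if valid, would apply verbatim---the authors state they can only prove the cubic equation with a possibly nonzero quadratic term, precisely because the Dehn twist is unavailable. So as written, your proof is incomplete in the case $2C_M \mid n$, for both the existence of $\gamma_S$ and the statement $\gamma_S\equiv 0,1 \pmod 4$; it becomes complete if you replace the duality claim by the Dehn twist argument.
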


The proof of Theorem~\ref{t:cubic-eq-sphere}, given
in~\S\ref{sb:prf-cubic-eq-S}, follows from a simple argument involving
Lagrangian Floer homology.  The cases~\eqref{i:n-odd},~\eqref{i:ndiv}
are particularly simple, whereas case~\eqref{i:div} splits into two
sub-cases:
\begin{enumerate}
  \item [(2i-a)] \label{i:2CM-n} $2C_M | n$.
  \item[(2i-b)] \label{i:CM-n} $C_M | n$, but $2C_M \centernot| n$.
\end{enumerate}
We will see below that out of these two sub-cases the most interesting
is~(2i-a). In that case the constant $\gamma_S$ has other
interpretations coming from Floer theory and enumerative geometry of
holomorphic disks. These will be explained in detail in the sequel.

\begin{rem}
   \begin{enumerate}
     \item When $n$ is even it is easy to see that $[S]  
     \in H_n(M)$ is neither $0$ nor a torsion class. Therefore in that
      case $\gamma_S$ is uniquely determined.
     \item Points~\eqref{i:n-odd} and~\eqref{i:ndiv} of the theorem
      cover the symplectically aspherical case (i.e.
      $[\omega]|_{\pi_2(M)}=0$) if we set $C_M = \infty$. Of course,
      the statement in that case is completely obvious.
     \item A version of Theorem \ref{t:cubic-eq-sphere} also holds 
     in the non-monotone case for Lagrangian 2-spheres, the precise 
     statement can be found in~\S\ref{s:non-monotone}.
    \item Theorem~\ref{t:cubic-eq-sphere} continues to hold also when
     $S$ is a $\mathbb{Z}$-homology sphere, except possibly when $2C_M
     | n$.  The difference between the case $2C_M | n$ and the others
     is that in that case $[S]$ a priori satisfies only the cubic
     equation~\eqref{eq:cubic-L} from Theorem~\ref{t:cubic-eq} below.
     For the vanishing of the coefficient of $[S]^{*2}$ we will use
     the Dehn twist along $S$ (see Corollary~\ref{c:sig=0} and the
     short discussion after it), hence we need to assume that $S$ is
     diffeomorphic to a sphere. At the same time we are not aware of
     interesting computable examples where $S$ is a
     $\mathbb{Z}$-homology sphere yet not a genuine sphere.
   \end{enumerate}
\end{rem}

For the rest of the introduction we concentrate on case~(2i-a) and its
possible generalizations. Assume from now on that $L \subset M$ is a
Lagrangian submanifold (not necessarily a sphere). Denote by
$HF_*(L,L)$ the self Floer homology of $L$ with coefficients in
$\mathbb{Z}$. See~\S\ref{s:floer-setting} for the Floer theoretical
setting. In what follows we will recurringly appeal to the following
set of assumptions or to a subset of it:
\begin{assumption-L}
   \begin{enumerate}
     \item $L$ is closed (i.e. compact without boundary). Furthermore
      $L$ is monotone with minimal Maslov number $N_L$ that satisfies
      $N_L \mid n$ (see~\S\ref{sb:monotone} for the definitions). Set
      $\nu = n/N_L$.
     \item $L$ is oriented. Moreover we assume that $L$ is spinable
      (i.e. can be endowed with a spin structure).
     \item $HF_n(L,L)$ has rank $2$.
     \item Write $\chi = \chi(L)$ for Euler-characteristic of $L$. We
      assume that $\chi \neq 0$.
   \end{enumerate}
\end{assumption-L}
Note that conditions $(1)$ and $(2)$ together imply that $n=$ even,
since orientable Lagrangians have $N_L=$ even. Independently,
conditions $(2)$ and $(4)$ also imply that $n=$even. As we will see
later there are many Lagrangian submanifolds that satisfy
Assumption~$\mathscr{L}$ -- for example, even dimensional Lagrangian
spheres in monotone symplectic manifolds $M$ with $2C_M | n$.
See~\S\ref{sb:exps} and~\S\ref{s:examples} for more examples.

\pbred{Unless otherwise stated, from now on we implicitly assume all
  Lagrangian submanifolds to be connected.}

\subsection{The Lagrangian cubic equation} \label{sb:lcubic-eq} Here
we need to work with $\mathbb{Q}$ as the base ring. Denote by $QH(M;
\mathbb{Q}[q])$ the quantum homology of $M$ with coefficients in the
ring $\mathbb{Q}[q]$. Given an oriented Lagrangian submanifold $L
\subset M$ denote by $[L] \in QH_n(M;\mathbb{Q}[q])$ its homology
class in the quantum homology of the ambient manifold $M$. We will
also make use of the following notation $\varepsilon =
(-1)^{n(n-1)/2}$.

Our first result is the following.
\begin{mainthm}[The Lagrangian cubic equation] \label{t:cubic-eq} Let
   $L \subset M$ be a Lagrangian submanifold satisfying
   assumption~$\mathscr{L}$. Then there exist unique \cmred{constants
     $\sigma_L \in \tfrac{1}{\chi^2}\mathbb{\mathbb{Z}}$, $\tau_L \in
     \tfrac{1}{\chi^3} \mathbb{Z}$} such that the following equation
   holds in $QH(M;\mathbb{Q}[q])$:
   \begin{equation} \label{eq:cubic-L} [L]^{*3} - \varepsilon \chi
      \sigma_L [L]^{*2}q^{n/2} - \chi^2 \tau_L [L]q^{n} = 0.
   \end{equation}
   \cmred{If $\chi$ is square-free then $\sigma_L \in
     \tfrac{1}{\chi}\mathbb{Z}$ and $\tau_L \in
     \tfrac{1}{\chi^2}\mathbb{Z}$.} Moreover, the constant $\sigma_L$
   can be expressed in terms of genus $0$ Gromov-Witten invariants as
   follows:
   \begin{equation} \label{eq:GW-LLL} \sigma_L = \frac{1}{\chi^2}
      \sum_A GW^M_{A,3}([L],[L],[L]),
   \end{equation}
   where the sum is taken over all classes $A \in H_2(M)$ with
   $\langle c_1, A \rangle = n/2$.
\end{mainthm}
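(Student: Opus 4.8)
The plan is to work inside the Floer homology $HF_*(L,L)$ and exploit the fact that it carries a ring structure (the quantum/Donaldson product on Floer homology) together with a module structure over $QH(M;\mathbb{Q}[q])$, intertwined by the two natural maps: the map $i_L \colon QH(M) \to HF_*(L,L)$ (essentially the closed-open map / acceleration of $[L]\mapsto$ fundamental class) and the map $j_L \colon HF_*(L,L) \to QH(M)$ going the other way, which on the unit component sends the fundamental class of $L$ to $[L]$. The key algebraic input is that $HF_*(L,L)$ has total rank $2$ in degree $n$ by Assumption~$\mathscr{L}(3)$; combined with monotonicity and $N_L \mid n$, one sees $HF_*(L,L)$ is concentrated, as a $\mathbb{Q}[q,q^{-1}]$-module, in degrees $\equiv 0, n \pmod{N_L}$, so that in the relevant grading range it is a free rank-$2$ module generated by the unit $e_L \in HF_0$ and a second generator. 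I would then show that the composite $j_L \circ i_L \colon QH(M) \to QH(M)$ is given by quantum multiplication by $[L]$ (this is a standard compatibility of the two maps with module structures), so any relation satisfied by the image of $[L]$ in the rank-$2$ algebra $HF_*(L,L)$ pushes forward to the corresponding relation for $[L]^{*k}$ in $QH(M)$ after applying $j_L$, and conversely $i_L$ lets us pull the computation into $HF$.

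The core of the argument is therefore a purely algebraic fact about the ring $A := HF_*(L,L)\otimes \mathbb{Q}[q,q^{-1}]$, which is a graded-commutative (or rather, by a sign argument using $\varepsilon = (-1)^{n(n-1)/2}$, appropriately commutative) unital $\mathbb{Q}[q,q^{-1}]$-algebra of rank $2$. Any rank-$2$ unital algebra over a field is generated by a single element $x$ subject to a quadratic relation $x^2 = \beta x + \delta$ for unique scalars $\beta, \delta$; here I take $x$ to be the image $i_L([L])$, which lives in degree $n$, forcing $\beta$ to have degree $n$ (hence $\beta = \varepsilon\chi\sigma_L q^{n/2}$ up to a normalization, using $N_L\mid n/2$ when $2C_M\mid n$ — this is where the half-integrality and the factor $\varepsilon$ enter) and $\delta$ to have degree $2n$ (hence $\delta = \chi^2\tau_L q^{n}$). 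Multiplying the quadratic relation by $x$ gives $x^3 = \beta x^2 + \delta x$; applying $j_L$, using $j_L \circ i_L = [L]*(-)$ and $j_L(e_L) = [L]$, yields precisely equation~\eqref{eq:cubic-L} in $QH(M;\mathbb{Q}[q])$. The appearance of $\chi$ and $\chi^2$ as denominators is traced to the fact that $j_L(x^2) = j_L(i_L([L])\cdot i_L([L]))$ and the Poincaré pairing / trace on $HF_*(L,L)$ evaluates the unit against itself as $\chi(L)$ (the Euler characteristic is the self-intersection-type number governing the rank-$2$ Frobenius structure), so inverting these traces produces the asserted denominators; the square-free refinement follows by a more careful integrality bookkeeping on the two generators.

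For the identity~\eqref{eq:GW-LLL}, I would identify $\sigma_L$ with the coefficient $\beta$ (up to the normalization $\varepsilon\chi$ and the power of $q$) and then compute that coefficient by pairing the quadratic relation $x^2 = \beta x + \delta$ against the unit using the Frobenius trace: $\langle x^2, e_L\rangle = \beta\langle x, e_L\rangle + \delta\langle e_L,e_L\rangle$. The term $\langle x^2, e_L\rangle = \langle i_L([L]) \cdot i_L([L]), e_L\rangle$ is a triple Floer product of the fundamental class of $L$ with itself three times, which by the standard open-closed comparison (the structure maps count holomorphic disks with boundary on $L$ that, in the monotone setting and after summing over Maslov-index contributions, reduce to closed genus-$0$ curves through three copies of $[L]$) equals $\sum_A GW^M_{A,3}([L],[L],[L])$ over classes with $\langle c_1, A\rangle = n/2$; meanwhile $\langle e_L, e_L\rangle = \chi$ and $\langle x, e_L\rangle$ vanishes for degree reasons in the relevant $q$-component, isolating $\sigma_L = \chi^{-2}\sum_A GW^M_{A,3}([L],[L],[L])$.

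The main obstacle I anticipate is \emph{bookkeeping the gradings, signs, and coefficient normalizations simultaneously}: one must pin down the $\mathbb{Z}/2$-graded versus $\mathbb{Z}$-graded structure of $HF_*(L,L)$, verify that the second generator can be placed in exactly degree $n$ (not merely $n \bmod N_L$) so that $\beta$ is forced into degree $n$, justify the sign $\varepsilon$ from the graded-commutativity of the Floer product, and control the spin-structure-dependent orientations that make the counts in~\eqref{eq:GW-LLL} come out with the stated sign. The algebraic skeleton — rank-$2$ algebra $\Rightarrow$ quadratic relation $\Rightarrow$ multiply by $x$ $\Rightarrow$ push down via $j_L$ — is short; essentially everything subtle is in establishing the properties of $HF_*(L,L)$ and the maps $i_L, j_L$ cited above, and in the disk-counting-to-$GW$ comparison for~\eqref{eq:GW-LLL}.
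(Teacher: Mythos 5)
Your skeleton is essentially the paper's: the paper also works in the rank-two algebra $QH_n(L;\Lambda_{\mathbb Q})\cong HF_n(L,L;\mathbb Q)$, takes $x=\tfrac{1}{\xi}\,[L]*e_L$ (the module action of $[L]$ on the unit, i.e.\ your $i_L([L])$ normalized so that $\widetilde\epsilon_L(x)$ is the point class), writes the quadratic relation $x*x=\sigma x t^\nu+\tau e_L t^{2\nu}$, and pushes it into $QH(M)$ via the quantum inclusion (your $j_L$) using its $QH(M)$-linearity. However, your derivation of \eqref{eq:cubic-L} has a concrete slip: you first multiply the quadratic relation by $x$ to get $x^3=\beta x^2+\delta x$ and then apply $j_L$. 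Since $j_L(x^k)=[L]^{*(k+1)}$ (each application of the module property raises the power by one, and $j_L(e_L)=[L]$), this yields $[L]^{*4}=\beta[L]^{*3}+\delta[L]^{*2}$, which is equation \eqref{eq:cubic-L} multiplied by an extra factor of $[L]$ --- a strictly weaker statement from which you cannot recover \eqref{eq:cubic-L} without knowing that $[L]$ is not a zero divisor. The fix is simply to apply $j_L$ to the quadratic relation itself: $j_L(x^2)=\tfrac{1}{\xi^2}[L]^{*3}$, $j_L(x)=\tfrac{1}{\xi}[L]^{*2}$, $j_L(e_L)=[L]$ give \eqref{eq:cubic-L} directly. (Uniqueness of $\sigma_L,\tau_L$, which you do not address, then follows because $[L]$ is non-torsion when $\chi\neq 0$; the integrality claims are obtained in the paper by applying the augmentation $\epsilon_L$ to the unnormalized element $y=\xi x$.)

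The second genuine issue is your route to \eqref{eq:GW-LLL}. You propose to evaluate $\langle x^2,e_L\rangle$ by a Frobenius trace in $HF_*(L,L)$ and then invoke an ``open-closed comparison'' reducing counts of holomorphic disks with boundary on $L$ to closed genus-$0$ Gromov--Witten invariants through three copies of $[L]$. That comparison is nontrivial and is not established anywhere in your argument; as stated it is the missing step. The paper avoids it entirely: once \eqref{eq:cubic-L} is known in $QH(M;\mathbb Q[q])$, one applies the \emph{ambient} augmentation $\widetilde\epsilon_M$ (coefficient of the point class) to both sides and uses the elementary Lemma~\ref{l:p-class} --- a dimension count showing that for classical classes $a,b$ the point-class component of $a*b$ has no quantum corrections, i.e.\ $\widetilde\epsilon_M(a*b)=\widetilde\epsilon_M(a\cdot b)$. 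This isolates the coefficient of $p_M q^{n/2}$ as $\#([L]\cdot\alpha_n)=\sum_A GW_{A,3}([L],[L],[L])$ with $\langle c_1,A\rangle=n/2$, where $\alpha_n$ is the $q^{n/2}$-component of $[L]*[L]$; no disk-to-sphere comparison is needed. I recommend replacing your trace argument by this one.
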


In~\S\ref{s:lag-cubic-eq} we will prove a more general result
concerning a Lagrangian submanifold $L$ and an arbitrary class $c \in
H_n(M)$ which satisfies $c \cdot [L] \neq 0$. We will prove that they
satisfy a mixed equation of degree three involving $[L]$ and $c$.
Equation~\eqref{eq:cubic-L} is the special case $c = [L]$.

Here is an immediate corollary of Theorem~\ref{t:cubic-eq}:
\begin{maincor} \label{c:sig=0} Let $L \subset M$ be a Lagrangian
   submanifold satisfying Assumption $\mathscr{L}$. Assume in addition
   that there exists a symplectic diffeomorphism $\varphi: M
   \longrightarrow M$ such that $\varphi_*([L]) = -[L]$. Then
   $\sigma_L = 0$, hence equation~\eqref{eq:cubic-L} reads in this
   case:
   \begin{equation*}
      [L]^{*3} - \chi^2 \tau_L [L]q^{n} = 0.
   \end{equation*}
\end{maincor}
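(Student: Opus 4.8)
The plan is to leverage the uniqueness clause in Theorem~\ref{t:cubic-eq} together with the naturality of quantum homology under symplectic diffeomorphisms. First I would recall that $\varphi$ induces a graded ring automorphism $\varphi_*$ of $QH(M;\mathbb{Q}[q])$: since $\varphi^*\omega = \omega$, the pullback $\varphi^*J$ of a compatible almost complex structure is again compatible, so $\varphi$ identifies the relevant moduli spaces of holomorphic spheres and $\varphi^* c_1 = c_1$; consequently $\varphi_*$ is $\mathbb{Q}[q]$-linear (in particular it fixes $q$), preserves the grading, and satisfies $\varphi_*(a * b) = \varphi_*(a) * \varphi_*(b)$ for all $a,b$, this last point following from the deformation invariance of the genus~$0$ Gromov--Witten invariants defining $*$ and the transformation law $GW^M_{\varphi_*A,3}(\varphi_*a,\varphi_*b,\varphi_*c) = GW^M_{A,3}(a,b,c)$.

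Next I would apply $\varphi_*$ to equation~\eqref{eq:cubic-L}. Using $\varphi_*([L]) = -[L]$ and bilinearity of $*$ over $\mathbb{Q}[q]$ we get $\varphi_*([L]^{*3}) = (-[L])^{*3} = -[L]^{*3}$ and $\varphi_*([L]^{*2}) = (-[L])^{*2} = [L]^{*2}$, so the image of~\eqref{eq:cubic-L} reads
\begin{equation*}
  -[L]^{*3} - \varepsilon \chi \sigma_L [L]^{*2} q^{n/2} + \chi^2 \tau_L [L] q^{n} = 0 .
\end{equation*}
Multiplying by $-1$,
\begin{equation*}
  [L]^{*3} - \varepsilon \chi (-\sigma_L)\, [L]^{*2} q^{n/2} - \chi^2 \tau_L [L] q^{n} = 0 ,
\end{equation*}
which is again an equation of the form prescribed by Theorem~\ref{t:cubic-eq}, with the pair $(-\sigma_L, \tau_L)$ in place of $(\sigma_L, \tau_L)$. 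Since $L$ satisfies Assumption~$\mathscr{L}$ (so $\chi \neq 0$), the uniqueness part of Theorem~\ref{t:cubic-eq} applies and forces $-\sigma_L = \sigma_L$, hence $\sigma_L = 0$. Substituting back into~\eqref{eq:cubic-L} gives the displayed equation of the corollary.

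There is essentially no deep obstacle here; the only point worth stating carefully is that $\varphi_*$ genuinely respects the ring structure and the grading of $QH(M;\mathbb{Q}[q])$ and fixes $q$ — i.e.\ that the sphere counts defining the quantum product are $\varphi$-invariant and that the structure constants before and after applying $\varphi$ are indexed by the same set of spherical classes, organized by their Chern number. Once this functoriality is in hand the argument is purely formal. As an alternative that sidesteps even the uniqueness statement, one can run the same input through the Gromov--Witten formula~\eqref{eq:GW-LLL}: as $A$ ranges over the classes with $\langle c_1, A\rangle = n/2$ so does $\varphi_*A$, whence $\chi^2 \sigma_L = \sum_A GW^M_{A,3}([L],[L],[L]) = \sum_A GW^M_{\varphi_*A,3}(-[L],-[L],-[L]) = -\chi^2 \sigma_L$, again yielding $\sigma_L = 0$.
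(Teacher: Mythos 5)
Your proposal is correct and follows essentially the same route as the paper: both apply $\varphi_*$ to equation~\eqref{eq:cubic-L} using $\varphi_*([L])=-[L]$. The only cosmetic difference is the last step — you invoke the uniqueness clause of Theorem~\ref{t:cubic-eq} to force $-\sigma_L=\sigma_L$, whereas the paper subtracts the two equations to get $\varepsilon\chi\sigma_L[L]^{*2}=0$ and then notes $[L]^{*2}\neq 0$ because $[L]\cdot[L]=\varepsilon\chi\neq 0$; both are valid.
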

When $L$ is a Lagrangian sphere in a symplectic manifold $M$ with
$2C_M | n$ then point~\eqref{i:div} of Theorem~\ref{t:cubic-eq-sphere}
follows from Corollary~\ref{c:sig=0}. Indeed, we can take $\varphi$ to
be the Dehn twist along $L$. The Picard-Lefschetz formula (see
e.g.~\cite{Dimca:sing-hyper-book, Ar:sing-theory-I}) gives
$\varphi_*([L]) = -[L]$ since $n=\dim L$ is even and $\chi = 2$.
Corollary~\ref{c:sig=0} then implies that $\sigma_L = 0$ (and we have
$\gamma_L = 4 \tau_L$).  \cmred{ Note that in this case we have
  $\tau_L \in \tfrac{1}{4}\mathbb{Z}$.}

\begin{proof}[Proof of Corollary~\ref{c:sig=0}]
   Applying $\varphi_*$ to the equation~\eqref{eq:cubic-L} and
   comparing the result to~\eqref{eq:cubic-L} yields $\varepsilon \chi
   \sigma_L [L]^{*2}=0$. Since $\chi \neq 0$ it follows that $\sigma_L
   [L]^{*2}=0$. But $[L] \cdot [L] = \varepsilon \chi \neq 0$, hence
   $[L]^{*2} \neq 0$. This implies that $\sigma_L=0$.
\end{proof}

\subsection{The discriminant} \label{sb:discr-intro} \cntrsb Let $A$
be a quadratic algebra over $\mathbb{Z}$. By this we mean that $A$ is
a commutative unital ring such that $\mathbb{Z}$ embeds as a subring
of $A$, $\mathbb{Z} \to A$, and furthermore that $A/\mathbb{Z} \cong
\mathbb{Z}$. Thus the underlying additive abelian group of $A$ is a
free abelian group of rank $2$. Pick a generator $p \in A/\mathbb{Z}$
so that $A/\mathbb{Z} = \mathbb{Z} p$. We have the following exact
sequence:
\begin{equation} \label{eq:ex-seq-A} 0 \longrightarrow \mathbb{Z}
   \longrightarrow A \xrightarrow{\;\;\epsilon \;\;} \mathbb{Z} p
   \longrightarrow 0,
\end{equation}
where the first map is the ring embedding and $\epsilon$ is the
obvious projection.  Choose a lift $x \in A$ of $p$, i.e. $\epsilon(x)
= p$. Then additively we have $A \cong \mathbb{Z} x \oplus
\mathbb{Z}$.  With these choices there exist $\sigma(p,x), \tau(p,x)
\in \mathbb{Z}$ such that $$x^2 = \sigma(p,x) x + \tau(p,x).$$ The
integers $\sigma(p,x), \tau(p,x)$ depend on the choices of $p$ and of
$x$.  However, a simple calculation (see~\S\ref{sbsb:discr-well-def})
shows that the following expression
\begin{equation} \label{eq:discr-A} \Delta_A := \sigma(p,x)^2 + 4
   \tau(p,x) \in \mathbb{Z}
\end{equation}
is independent of $p$ and $x$, hence is an invariant of the
isomorphism type of $A$. In fact in~\S\ref{sbsb:discr-inv-class} we
show that $\Delta_A$ determines the isomorphism type of $A$. We call
$\Delta_A$ the discriminant of $A$.

\begin{remsnonum}
   \begin{enumerate}
     \item Another description of $\Delta_A$ is the following. Write
      $A$ as $A \cong \mathbb{Z}[T] / (f(T))$, where $f(T) \in
      \mathbb{Z}[T]$ is a monic quadratic polynomial. Then $\Delta_A$
      is the discriminant of $f(T)$ (and is independent of the choice
      of $f(T)$). In particular $A_{\mathbb{C}} := A \otimes
      \mathbb{C}$ is semi-simple iff $\Delta_A \neq 0$.
     \item When $\Delta_A$ is not a square $A_{\mathbb{Q}} := A
      \otimes \mathbb{Q}$ is a quadratic number field. The
      discriminant $\Delta_A$ is related to the discriminant of
      $A_{\mathbb{Q}}$ as defined in number theory.
     \item It is easy to see from~\eqref{eq:discr-A} that the only
      values $\Delta_A (\bmod \,4)$ can assume are $0$ and $1$.
   \end{enumerate}
\end{remsnonum}

Let $L$ be a Lagrangian submanifold satisfying conditions $(1)-(3)$ of
Assumption~$\mathscr{L}$ and choose a spin structure on $L$ compatible
with its orientation. Consider $A = HF_n(L,L)$ endowed with the
Donaldson product $$*: HF_n(L,L) \otimes HF_n(L,L) \longrightarrow
HF_n(L,L), \quad a \otimes b \longmapsto a*b.$$ Recall that $A$ is a
unital ring with a unit which we denote by $e_L \in HF_n(L,L)$. The
conditions $(1)-(3)$ of Assumption~$\mathscr{L}$ ensure that $A$ is a
quadratic algebra over $\mathbb{Z}$. (In case $A$ has torsion we just
replace it by $A/T$, where $T$ is its torsion ideal.) Denote by
$\Delta_L$ the discriminant of $A$, $\Delta_L := \Delta_A$ as defined
in~\eqref{eq:discr-A}. (We suppress here the dependence on the spin
structure, as we will soon see that in our case $\Delta_L$ does not
depend on it.)

The following theorem shows that the discriminant $\Delta_L$ depends
only on the class $[L] \in QH_n(M)$ and can be computed by means of
the ambient quantum homology of $M$.
\begin{mainthm} \label{t:rel-to-discr} Let $L \subset M$ be a
   Lagrangian submanifold satisfying Assumption~$\mathscr{L}$. Let
   $\sigma_L, \tau_L \in \mathbb{Q}$ be the constants from the cubic
   equation~\eqref{eq:cubic-L} in Theorem~\ref{t:cubic-eq}. Then
   $$\Delta_L = \sigma_L^2 + 4 \tau_L.$$
\end{mainthm}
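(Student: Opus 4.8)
The plan is to compute both sides of the claimed identity $\Delta_L = \sigma_L^2 + 4\tau_L$ by relating the cubic equation~\eqref{eq:cubic-L} in the ambient quantum homology to the quadratic equation defining the discriminant of $A = HF_n(L,L)$ (modulo torsion). The bridge between the two is the unital ring homomorphism provided by Lagrangian Floer theory: there is a map $QH(M;\mathbb{Q}[q]) \to HF_*(L,L)\otimes \mathbb{Q}$ (module structure / quantum inclusion) sending $[L]$ to $\chi \cdot e_L$ in degree $n$, since the "self-intersection'' of $L$ in Floer homology is the Euler class and $[L]\cdot[L] = \varepsilon\chi$. The key point is that this map intertwines the quantum product on $QH(M)$ with the Donaldson product on $HF_*(L,L)$. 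So I would first recall (or cite from the earlier Floer-theoretic sections) that $[L] \mapsto c_L$ for some class $c_L \in HF_n(L,L)$ with $\epsilon(c_L)$ a generator of $A/\mathbb{Z}$ — in fact one expects $c_L$ to be a specific lift of the generator $p$, up to the normalization by $\chi$. The subtlety is keeping track of the powers of $q$: each quantum product in degree drops the Maslov index by the appropriate amount, and since $N_L \mid n$, the element $c_L$ and its Donaldson powers all land in $HF_n(L,L)$, with $q^{n/2}$ and $q^n$ accounting for the bookkeeping.

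Next I would push equation~\eqref{eq:cubic-L} forward under this ring map. Setting $x := c_L \in A$ (after killing torsion, and dividing by $\chi$ where appropriate), the image of~\eqref{eq:cubic-L} becomes
\begin{equation*}
   x^{*3} - \varepsilon\chi\sigma_L\, x^{*2} - \chi^2\tau_L\, x = 0
\end{equation*}
in $A$, possibly after rescaling by powers of $\chi$; here I am suppressing the $q$'s which are units and only serve to match degrees. Since $A$ has rank $2$ and $x$ generates $A/\mathbb{Z}$, the element $x$ already satisfies its own monic quadratic relation $x^2 = \sigma(p,x)\,x + \tau(p,x)$ with $\sigma(p,x),\tau(p,x)\in\mathbb{Z}$ and $\Delta_L = \sigma(p,x)^2 + 4\tau(p,x)$. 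Multiplying the quadratic by $x$ gives a cubic relation for $x$; comparing it with the pushed-forward cubic~\eqref{eq:cubic-L}, and using that $1, x$ are linearly independent over $\mathbb{Q}$ in $A\otimes\mathbb{Q}$, I would read off $\sigma(p,x) = \varepsilon\chi\sigma_L$ (or a rescaled version) and $\tau(p,x) = \chi^2\tau_L$ — with the normalization chosen so that $x$ corresponds to $[L]/\chi$ in the appropriate sense, so that these come out as the honest $\sigma(p,x) = \sigma_L\cdot(\text{unit})$ etc. Plugging into $\Delta_L = \sigma(p,x)^2 + 4\tau(p,x)$ then yields $\Delta_L = \sigma_L^2 + 4\tau_L$ after the $\chi$'s cancel; the statement that $\sigma_L\in\frac1{\chi^2}\mathbb Z$ and $\tau_L\in\frac1{\chi^3}\mathbb Z$ from Theorem~\ref{t:cubic-eq} is exactly what makes $\sigma_L^2+4\tau_L$ an integer a priori, consistent with $\Delta_L\in\mathbb Z$.

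The main obstacle, and the step requiring the most care, is the precise normalization: pinning down the exact constant by which the Floer-theoretic image of $[L]$ differs from a chosen lift $x$ of the generator $p\in A/\mathbb Z$, and checking that the factors of $\chi$, $\varepsilon$, and powers of $q$ all cancel correctly so that the cubic~\eqref{eq:cubic-L} maps precisely to $x\cdot(x^2 - \sigma(p,x)x - \tau(p,x))$ and not to some twisted version. One must also verify that $\Delta_L$ is genuinely independent of the choice of lift $x$ and of the spin structure — the former is the computation referenced in~\S\ref{sbsb:discr-well-def}, and the latter should follow because changing the spin structure changes the Floer product by an overall automorphism that does not affect the discriminant. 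A secondary point is to make sure the quantum inclusion map is actually a ring homomorphism in the relevant degree range and that $HF_n(L,L)$ having rank $2$ forces the image of $[L]$ to be a generator of $A/\mathbb Z$ rather than landing in the $\mathbb Z$-part; this uses the nondegeneracy of the Floer pairing together with $[L]\cdot[L]=\varepsilon\chi\neq 0$.
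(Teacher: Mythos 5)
Your overall strategy---relating the cubic in $QH(M;\mathbb{Q}[q])$ to the quadratic relation in $HF_n(L,L)$ via a multiplicative map---is the right one, but you run it in the wrong direction, and this creates a genuine gap. You propose to push the cubic equation~\eqref{eq:cubic-L} \emph{into} Floer homology via $a \mapsto a*e_L$ (note this is the module action, not the quantum inclusion $i_L$, which goes the other way; note also that $[L]*e_L$ is $\varepsilon\chi$ times a lift of the point class, living in degree $0$ of the periodic grading, not $\chi e_L$ in degree $n$), and then to compare the resulting cubic relation for $x$ with $x$ times its minimal quadratic $x^{*2}=\sigma(p,x)\,x\,t^{\nu}+\tau(p,x)\,e_L t^{2\nu}$. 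This comparison does not determine $\sigma_L,\tau_L$: all it yields is that the monic quadratic $X^2-\sigma(p,x)X-\tau(p,x)$ divides $X\bigl(X^2-\sigma_L X-\tau_L\bigr)$. When $\tau(p,x)\neq 0$ the two quadratics must coincide and you are done; but when $\tau(p,x)=0$ (i.e.\ $x$ is a zero divisor, which does occur --- compare the many examples with $\Delta_L=0$) you only obtain the single relation $\sigma(p,x)^2=\sigma_L\,\sigma(p,x)+\tau_L$, which is compatible with values of $(\sigma_L,\tau_L)$ for which $\sigma_L^2+4\tau_L=(\sigma_L-2\sigma(p,x))^2\neq\sigma(p,x)^2=\Delta_L$ (for instance $\sigma_L=0$, $\tau_L=\sigma(p,x)^2$). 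Your appeal to the linear independence of $e_L$ and $x$ is not enough to close this case.

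The paper avoids the issue by arguing in the opposite direction (proof of Theorem~\ref{t:cubic_eq-ccl}): one \emph{starts} from the quadratic relation satisfied by the specific lift $x_0=\tfrac{1}{\varepsilon\chi}[L]*e_L$ of the point class and applies the quantum inclusion $i_L$ (a $QH(M)$-module map with $i_L(e_L)=[L]$) to both sides. This produces a cubic equation in $QH(M;R^+_{\mathbb{Q}})$ whose coefficients are literally $\sigma(p,x_0)$ and $\tau(p,x_0)$; the uniqueness of the coefficients in~\eqref{eq:cubic-L} then gives $\sigma_L=\sigma(p,x_0)$ and $\tau_L=\tau(p,x_0)$, and $\Delta_L=\sigma_L^2+4\tau_L$ follows at once from the well-definedness of the discriminant computed over $\mathbb{Q}$ (\S\ref{sbsb:discr-ext-Q}), with no degenerate case to worry about. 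If you want to keep your direction, you would need to supplement the comparison with an extra step in the case $\tau(p,x)=0$, for example by applying the augmentation or $i_L$ to isolate the coefficient of $e_L$.
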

The proof appears in~\S\ref{s:lag-cubic-eq}.
\begin{remsnonum}
   \begin{enumerate}
     \item \textbf{Warning:} The pair of coefficients $\sigma_L,
      \tau_L$ and $\sigma(p,x), \tau(p,x)$ should not be confused.
      The first pair is always uniquely determined by $[L]$ and can be
      read off the ambient quantum homology of $M$ via the cubic
      equation~\eqref{eq:cubic-L}. In contrast, the second pair
      $\sigma(p,x), \tau(p,x)$ are defined via Lagrangian Floer
      homology and strongly depend on the choice of the lift $x$ of
      $p$. For example, we have seen that if $L$ is a sphere then
      $\sigma_L=0$, but as we will see later (e.g.
      in~\S\ref{s:disc-lcob}) for some (useful) choices of $x$ we have
      $\sigma(p,x) \neq 0$. Additionally, $\sigma(p,x), \tau(p,x) \in
      \mathbb{Z}$ while $\sigma_L, \tau_L \in \mathbb{Q}$. Still, the
      two pairs of coefficients are related in that $\sigma(p,x)^2 + 4
      \tau(p,x) = \sigma_L^2 + 4 \tau_L = \Delta_L$.

      As we will see in the proof of Theorem~\ref{t:rel-to-discr}, the
      coefficients $\sigma_L, \tau_L$ do occur as $\sigma(p,x_0),
      \tau(p,x_0)$ but for a special choice of $x_0$, which however
      requires working over $\mathbb{Q}$.

     \item A different version of the discriminant $\Delta_L$ was
      previously defined and studied by Biran-Cornea
      in~\cite{Bi-Co:lagtop}. In that paper the discriminant occurs as
      an invariant of a quadratic form defined on $H_{n-1}(L)$ via
      Floer theory. In the case $L$ is a $2$-dimensional Lagrangian
      torus the discriminant from~\cite{Bi-Co:lagtop} and $\Delta_L$,
      as defined above, happen to coincide due to the associativity of
      the product of $HF_n(L,L)$.  Moreover, in dimension $2$,
      $\Delta_L$ has an enumerative description in terms of counting
      holomorphic disks with boundary on $L$ which satisfy certain
      incidence conditions. This description continues to hold also
      for $2$-dimensional Lagrangian spheres with $N_L = 2$ (or more
      generally for all $2$-dimensional Lagrangian submanifolds
      satisfying Assumption~$\mathscr{L}$) and the proof is the same
      as in~\cite{Bi-Co:lagtop}.
     \item Since $\sigma_L, \tau_L$ do not depend on the spin
      structure chosen for $L$ (although $\sigma(p,x)$ and $\tau(p,x)$
      do) it follows from Theorem~\ref{t:rel-to-discr} that $\Delta_L$
      does not depend on that choice either. As for the orientation on
      $L$, if we denote $\bar{L}$ the Lagrangian $L$ with the opposite
      orientation then it follows from Theorem~\ref{t:cubic-eq} that
      $\sigma_{\bar{L}} = - \sigma_{L}$ and $\tau_{\bar{L}} = \tau_L$.
      In particular $\Delta_{\bar{L}} = \Delta_L$.
   \end{enumerate}
\end{remsnonum}

The next theorem is concerned with the behavior of the discriminant
under Lagrangian cobordism. We refer the reader to~\cite{Bi-Co:cob1}
for the definitions.

\begin{mainthm} \label{t:cob} Let $L_1, \ldots, L_r \subset M$ be
   monotone Lagrangian submanifolds, each satisfying conditions $(1)$
   -- $(3)$ of Assumption~$\mathscr{L}$. Let $V^{n+1} \subset
   \mathbb{R}^2 \times M$ be a connected monotone Lagrangian cobordism
   whose ends correspond to $L_1, \ldots, L_r$ and assume that $V$
   admits a spin structure. Denote by $N_V$ the minimal Maslov number
   of $V$ and assume that:
   \begin{enumerate}
     \item $H_{j N_V}(V,\partial V) = 0$ for every $j$.
     \item $H_{1+jN_V}(V) = 0$ for every $j$.
   \end{enumerate}
   Then $\Delta_{L_1} = \cdots = \Delta_{L_r}$. Moreover if $r \geq 3$
   then $\Delta_{L_i}$ is a perfect square for every $i$.
\end{mainthm}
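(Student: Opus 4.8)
The plan is to exploit the Lagrangian cobordism $V$ to transport the cubic equation~\eqref{eq:cubic-L} from one end to another, using the machinery of Biran--Cornea on the algebraic structure of Lagrangian cobordisms (cf.~\cite{Bi-Co:cob1}). The key point is that a monotone cobordism $V \subset \mathbb{R}^2 \times M$ with ends $L_1,\ldots,L_r$ induces, via Floer-theoretic invariants, an iterated cone decomposition relating the Floer homologies $HF_*(L_i,L_i)$; the homological hypotheses $H_{jN_V}(V,\partial V)=0$ and $H_{1+jN_V}(V)=0$ are precisely what is needed to force the relevant spectral sequence/cone to degenerate in the degree~$n$ part. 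First I would recall that, as a class in $QH_n(M;\mathbb{Q}[q])$, one has $[L_1] = [L_2] = \cdots = [L_r]$ up to the change of variables coming from the monotonicity rescaling (this is essentially the statement that a cobordism gives a homology between its ends in the ambient manifold, seen through the quantum inclusion). Granting this, Theorem~\ref{t:cubic-eq} applied at each end shows that $[L_i]$ satisfies~\eqref{eq:cubic-L} with coefficients $\sigma_{L_i}, \tau_{L_i}$; but uniqueness of the constants $\sigma, \tau$ in that equation, together with the fact that $[L_i]^{*2}$ and $[L_i]q^n$ are linearly independent over $\mathbb{Q}[q]$ (which follows from $[L_i]\cdot[L_i] = \varepsilon\chi_i \neq 0$), forces $\sigma_{L_1} = \cdots = \sigma_{L_r}$ and $\tau_{L_1} = \cdots = \tau_{L_r}$, hence $\Delta_{L_1} = \cdots = \Delta_{L_r}$ by Theorem~\ref{t:rel-to-discr}.

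The more delicate assertion is that $\Delta_{L_i}$ is a perfect square when $r \geq 3$. For this I would argue directly on the Floer-theoretic side rather than through the ambient quantum homology. The cobordism $V$, being connected with $r \geq 3$ ends and satisfying hypotheses~(1)--(2), should yield (again via~\cite{Bi-Co:cob1}) a nontrivial module/algebra map or an extra element in $HF_n(L_1,L_1)$ produced by the ``extra'' ends: concretely, counting the holomorphic curves in $\mathbb{R}^2 \times M$ with boundary on $V$ that connect three or more of the ends produces a distinguished class $x \in HF_n(L_1,L_1)$ which, together with the unit $e_{L_1}$, generates the quadratic algebra $A = HF_n(L_1,L_1)$ and which moreover satisfies $x^2 = \sigma(p,x)\,x$, i.e. $\tau(p,x) = 0$, because the relevant product of curves from distinct ends has no ``diagonal'' contribution. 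Then $\Delta_{L_1} = \sigma(p,x)^2 + 4\tau(p,x) = \sigma(p,x)^2$ is a perfect square by the discriminant formula~\eqref{eq:discr-A}, and by the first part of the theorem the same value is $\Delta_{L_i}$ for all $i$.

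The main obstacle I anticipate is the second part: making precise the claim that three (or more) ends of a connected cobordism force the existence of an idempotent-like generator $x$ with $\tau(p,x)=0$. This requires a careful analysis of the cone decomposition of $V$ in the spirit of~\cite{Bi-Co:cob1}, identifying which piece of that decomposition lands in degree $n$ and checking that the incidence/dimension conditions on the moduli of disks kill the self-intersection term. In particular one has to verify that the homological vanishing hypotheses~(1)--(2) are strong enough to guarantee both that the cone degenerates so that $HF_n(L_i,L_i)$ has rank~$2$ (consistent with condition~(3) of Assumption~$\mathscr{L}$) and that the extra generator coming from the multi-ended structure is genuinely non-proportional to $e_{L_1}$. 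Everything else---the transport of~\eqref{eq:cubic-L}, the uniqueness of $\sigma,\tau$, and the passage to $\Delta$ via Theorem~\ref{t:rel-to-discr}---is then essentially formal.
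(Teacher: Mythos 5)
There is a genuine gap, and it occurs already in the first step. You assert that the cobordism forces $[L_1]=[L_2]=\cdots=[L_r]$ in $QH_n(M;\mathbb{Q}[q])$ and then transport the cubic equation~\eqref{eq:cubic-L} from one end to another. But a cobordism with ends $L_1,\ldots,L_r$ only gives a relation of the form $\sum_i \pm [L_i]=0$ in $H_n(M)$, not equality of the individual classes. In the main application of the theorem (Corollary~\ref{c:del_1=del_2} via the surgery cobordism of Proposition~\ref{p:surgery}) the three ends are $L_1$, $L_2$ and $L_1\# L_2$ with $[L_1\# L_2]=[L_1]+[L_2]$, and in the explicit examples of~\S\ref{sbsb:intro-exp-blcp2} the spheres in the classes $E_1-E_2$ and $E_2-E_3$ in $M_3$ are not homologous yet have equal discriminants. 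So the premise on which your first part rests is false, and the route through Theorem~\ref{t:cubic-eq} is in any case unavailable here: the $L_i$ are only assumed to satisfy conditions $(1)$--$(3)$ of Assumption~$\mathscr{L}$, so $\chi(L_i)$ may vanish (or the $\chi(L_i)$ may differ), and the normalized coefficients $\sigma_{L_i},\tau_{L_i}$ need not even be defined.

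The actual mechanism is intrinsic to the cobordism and does not pass through the ambient class $[L]$ at all. One works with the quantum connectant $\delta\colon Q^+H_1(V,\partial V)\to Q^+H_0(\partial V)=\oplus_i Q^+H_0(L_i)$, which is multiplicative for the quantum products; the hypotheses $(1)$--$(2)$ of the theorem are used to show (via the exact sequence $0\to t\mathcal{C}^+\to\mathcal{C}^+\to CM\to 0$ and a spectral sequence) that suitable classes $\alpha_i\in H_1(V,\partial V)$ with $\partial\alpha_i=p_1-p_i$ admit lifts $\overline{\alpha}_i$ whose images $\delta(\overline{\alpha}_i)$, together with $(e_{L_1}+\cdots+e_{L_r})t^{\nu}$, form a basis of $\operatorname{image}\delta$. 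Expanding the identity $\delta(\overline{\alpha}_2*\overline{\alpha}_2)=\delta(\overline{\alpha}_2)*\delta(\overline{\alpha}_2)$, whose right-hand side is essentially $\overline{p}_1^{*2}+\overline{p}_2^{*2}$ plus multiples of the units, and comparing coefficients yields $\sigma_1=-\sigma_2$ and $\tau_1=\tau_2$ (for $r=2$), and for $r\geq 3$ the coefficient of $e_{L_3}$ additionally kills the $e_V$-term and forces $\tau_1=\tau_2=0$, whence $\Delta_{L_i}=\sigma_i^2$ is a perfect square. Your second paragraph correctly guesses that the perfect-square statement comes from finding a lift with vanishing $\tau$, but the distinguished element is $\delta(\overline{\alpha}_2)\approx\overline{p}_1-\overline{p}_2$ living in $\oplus_i QH_0(L_i)$ rather than a single class in $HF_n(L_1,L_1)$, and without the multiplicativity of $\delta$ and the basis statement above the claim that the ``diagonal contribution'' vanishes remains unsubstantiated.
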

The proof is given in~\S\ref{s:disc-lcob}. As a corollary we obtain:
\begin{maincor} \label{c:del_1=del_2} Let $(M, \omega)$ be a monotone
   symplectic manifold with $2 C_M \mid n$, where $C_M$ is the minimal
   Chern number of $M$. Let $L_1, L_2 \subset M$ be two Lagrangian
   spheres that intersect transversely at exactly one point. Then
   $\Delta_{L_1} = \Delta_{L_2}$ and moreover this number is a perfect
   square.
\end{maincor}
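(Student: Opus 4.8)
The plan is to exhibit $L_1$, $L_2$, together with their Lagrangian surgery $L_1\#L_2$, as the three ends of a single connected Lagrangian cobordism, and then to apply Theorem~\ref{t:cob} with $r=3$. First note that $2C_M\mid n$ forces $n$ to be even and $n\geq 2$, so that $L_1$ and $L_2$ are simply connected; since $M$ is monotone this implies that $L_1,L_2$ are monotone Lagrangians — every disk with boundary on a simply connected $L_i$ can be capped off to a sphere, so $\omega$ and the Maslov class are proportional on $\pi_2(M,L_i)$ with constant $\lambda_M/2$ — and their minimal Maslov number is $N_{L_i}=2C_M$, which divides $n$. Recall from the introduction that an even-dimensional Lagrangian sphere in a monotone $M$ with $2C_M\mid n$ satisfies Assumption~$\mathscr{L}$; in particular $L_1$ and $L_2$ satisfy conditions $(1)$--$(3)$. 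As $L_1,L_2$ meet transversely at a single point, the Polterovich--Biran--Cornea Lagrangian surgery produces a Lagrangian $L_3:=L_1\#L_2\subset M$ which is smoothly $S^n\#S^n\cong S^n$, again monotone with monotonicity constant $\lambda_M/2$, hence $L_3$ also satisfies conditions $(1)$--$(3)$ of Assumption~$\mathscr{L}$.

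Next I would invoke the Lagrangian surgery \emph{cobordism} of Biran--Cornea associated with the single transverse intersection point: it gives a connected monotone Lagrangian cobordism $V^{n+1}\subset\mathbb{R}^2\times M$ whose three ends are $L_1$, $L_2$ and $L_3=L_1\#L_2$, and whose monotonicity constant again equals $\lambda_M/2$. Smoothly, $V$ is diffeomorphic to the trace of the corresponding $1$-handle surgery turning $L_1\sqcup L_2$ into $L_1\#L_2$, so it deformation retracts onto $L_1\cup L_2$ joined by an arc; thus $V\simeq L_1\vee L_2\simeq S^n\vee S^n$. In particular $V$ is simply connected (as $n\geq 2$), with $\widetilde H_*(V;\mathbb{Z})\cong\mathbb{Z}^2$ concentrated in degree $n$. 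Since $V$ is simply connected, $w_1(V)=0$, and $w_2(V)$ vanishes because it evaluates to $0$ on the two sphere classes generating $H_2(V)$ (and $H^2(V;\mathbb{Z}/2)=0$ altogether when $n>2$); hence $V$ admits a spin structure. Each end is an $n$-sphere with $n\geq 2$, so it carries a unique spin structure, which is automatically the restriction of the chosen one on $V$.

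It then remains to verify hypotheses $(1)$ and $(2)$ of Theorem~\ref{t:cob} for $V$. Being an orientable Lagrangian, $V$ has even minimal Maslov number $N_V$. From $V\simeq S^n\vee S^n$ one gets $H_i(V;\mathbb{Z})=0$ unless $i\in\{0,n\}$, and by Lefschetz duality $H_k(V,\partial V;\mathbb{Z})\cong H^{n+1-k}(V;\mathbb{Z})$, which vanishes unless $k\in\{1,n+1\}$; moreover $H_0(V,\partial V)=0$ since $V$ is connected with non-empty boundary. As $n$ is even, the degrees $1$, $n-1$ and $n+1$ are all odd, whereas $jN_V$ is even for every $j$ (and the degree-$0$ case is covered by $H_0(V,\partial V)=0$ and by $1+jN_V\neq 0$). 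Therefore $H_{jN_V}(V,\partial V)=0$ and $H_{1+jN_V}(V)=0$ for all $j$, so conditions $(1)$ and $(2)$ of Theorem~\ref{t:cob} hold. Applying Theorem~\ref{t:cob} to $V$ with the ends $L_1,L_2,L_3$ (so $r=3$) yields $\Delta_{L_1}=\Delta_{L_2}=\Delta_{L_3}$, and since $r=3\geq 3$ each of these discriminants is a perfect square, proving the corollary.

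The step I expect to be the main obstacle is the second paragraph: carefully recalling the Biran--Cornea Lagrangian surgery construction at a single point, verifying that it yields a \emph{monotone} cobordism with exactly the three required ends (the relevant monotonicity hypothesis — equality of the monotonicity constants of $L_1$ and $L_2$ — holds here since both equal $\lambda_M/2$), and identifying its diffeomorphism, or at least homotopy, type with that of the naive smooth surgery trace. Once this is in place, the homological verification in the third paragraph is routine.
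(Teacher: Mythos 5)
Your proposal is correct and follows essentially the same route as the paper: the surgery cobordism with ends $L_1$, $L_2$, $L_1\#L_2$ that you invoke is exactly Proposition~\ref{p:surgery} (including the identification of its homotopy type with $L_1\vee L_2$ and the spin/monotonicity statements), and the paper then verifies conditions $(1)$--$(2)$ of Theorem~\ref{t:cob} just as you do in your third paragraph, via the more general Corollary~\ref{c:L1-L2-1pt}. The only cosmetic difference is that the paper obtains the spin structure on $V$ from the connected-sum description rather than from your characteristic-class computation, and it states the intermediate result for general Lagrangians satisfying suitable homological vanishing rather than only for spheres.
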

We will in fact prove a stronger result in~\S\ref{sb:lag-1-pt} (see
Corollary~\ref{c:L1-L2-1pt}).

\subsection{Examples} \label{sb:exps} \cntrsb

We begin with a topological criterion that assures that condition~(3)
in Assumption~$\mathscr{L}$ is satisfied. This provides us with examples
of Lagrangian submanifolds to which the theory applies.
\begin{mainprop} \label{p:criterion} Let $L \subset M$ be an oriented
   Lagrangian submanifold satisfying condition~$(1)$ of
   Assumption~$\mathscr{L}$. Assume in addition that:
   \begin{enumerate}
     \item $[L] \neq 0 \in H_n(M;\mathbb{Q})$ (this is satisfied e.g.
      when $\chi(L) \neq 0$).
     \item $H_{j N_L}(L) = 0$ for every $0<j<\nu$. 
   \end{enumerate}
   Then condition $(3)$ in Assumption~$\mathscr{L}$ is satisfied too.
   In particular Lagrangian spheres $L$ that satisfy condition $(1)$
   of Assumption~$\mathscr{L}$ satisfy the other three conditions in
   Assumption~$\mathscr{L}$.
\end{mainprop}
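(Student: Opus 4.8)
The plan is to compute the self-Floer homology $HF_*(L,L)$ (with $\mathbb{Z}$-coefficients) by means of the Oh spectral sequence, whose $E_1$-page is built from the singular homology of $L$ shifted by multiples of $N_L$. Since $L$ is monotone with minimal Maslov number $N_L$ and $N_L \mid n$, the Floer homology is $\mathbb{Z}/N_L$-graded (or, after lifting via the Novikov variable, $\mathbb{Z}$-graded), and the spectral sequence takes the form
\begin{equation*}
  E_1^{p,q} \;\cong\; H_{q-p N_L}(L) \quad \Longrightarrow \quad HF_{q}(L,L),
\end{equation*}
with differentials $d_r$ of degree $(-r, r N_L - 1)$ or the appropriate analogue in the index convention used in \S\ref{s:floer-setting}. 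The key input is that the degree-$n$ part of the $E_1$-page, which is what governs $HF_n(L,L)$, receives contributions only from the groups $H_{n - jN_L}(L) = H_{(\nu - j)N_L}(L)$ for $j \geq 0$, i.e. from $H_{\nu N_L}(L) = H_n(L)$, $H_{(\nu-1)N_L}(L)$, $H_{(\nu-2)N_L}(L), \ldots$, down to $H_0(L)$ in the $j = \nu$ term (and from $H_{n+jN_L}(L)$ for the negative shifts, which vanish for $j \geq 1$ since $n + jN_L > n = \dim L$).

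First I would invoke hypothesis $(2)$: $H_{jN_L}(L) = 0$ for all $0 < j < \nu$. This kills every intermediate term, leaving exactly two potentially nonzero contributions to the relevant diagonal, namely $H_n(L) \cong \mathbb{Z}$ (as $L$ is connected, oriented, closed) from $j = 0$, and $H_0(L) \cong \mathbb{Z}$ from $j = \nu$. Next I would argue that these two survive to $E_\infty$: any differential into or out of the $H_n(L)$-term or the $H_0(L)$-term would have to originate from, or land in, one of the intermediate groups $H_{jN_L}(L)$ with $0 < j < \nu$ (because the differentials shift degree by multiples of $N_L$), and these are all zero. Hence $HF_n(L,L)$ has a filtration with graded pieces $H_n(L) \cong \mathbb{Z}$ and $H_0(L) \cong \mathbb{Z}$, so it has rank $2$, which is condition $(3)$. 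One should note here that hypothesis $(1)$, $[L] \neq 0 \in H_n(M;\mathbb{Q})$, guarantees that $HF_*(L,L)$ is nonzero (equivalently that the fundamental class is not a boundary in the Floer complex, so the spectral sequence does not collapse to zero) — this rules out the degenerate possibility that the two surviving $\mathbb{Z}$'s cancel against each other through some exotic behaviour; alternatively, the unit $e_L \in HF_n(L,L)$ and the point class are linearly independent precisely when $[L]$ is nonzero rationally, via the augmentation / $PSS$-type comparison with $QH(M)$.

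For the final sentence of the proposition — that a Lagrangian sphere $L$ satisfying condition $(1)$ satisfies all of Assumption $\mathscr{L}$ — I would simply verify the hypotheses of the proposition itself and of the remaining conditions. A sphere $S^n$ has $H_j(S^n) = 0$ for $0 < j < n$, so in particular $H_{jN_L}(L) = 0$ for $0 < j < \nu$ (when $\nu \geq 2$; when $\nu = 1$ the condition is vacuous), giving condition $(2)$ of the proposition; thus condition $(3)$ of Assumption $\mathscr{L}$ holds. For condition $(3)$ of Assumption $\mathscr{L}$ in the case $\nu=1$, i.e. $N_L = n$, the $E_1$-page already has only the two terms $H_n$ and $H_0$ with no room for differentials, so the conclusion is immediate. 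Conditions $(1)$ and $(2)$ of Assumption $\mathscr{L}$ (closed, monotone with $N_L \mid n$; oriented and spinable) are part of the hypothesis that $L$ satisfies condition $(1)$ of Assumption $\mathscr{L}$, together with the standard fact that $S^n$ with $n \geq 2$ is spin (and that an orientable Lagrangian automatically has $N_L$ even, forcing $n$ even). Finally, $\chi(S^n) = 2 \neq 0$ when $n$ is even, which is condition $(4)$.

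The main obstacle I expect is not the algebra of the spectral sequence but a bookkeeping subtlety: making sure that no $d_r$-differential for $r \geq 2$ can connect the $H_n$-term to the $H_0$-term \emph{directly}, skipping the vanishing intermediate groups — this requires knowing that the differentials move by steps that are \emph{multiples of} $N_L$ in the appropriate grading, so that going from degree $n = \nu N_L$ to degree $0$ takes exactly $\nu$ steps and must pass through the intervening degrees $(\nu-1)N_L, \ldots, N_L$, all of which are zero on the $E_1$-page and hence on every later page. One must also double-check the case $\nu = 1$ separately (where there is nothing to prove) and confirm the nonvanishing of $HF_*(L,L)$ under hypothesis $(1)$ so that rank is exactly $2$ and not $0$.
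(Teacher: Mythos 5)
There is a genuine gap in your main argument for why the two surviving terms persist to $E_\infty$. You claim that any differential into or out of the $H_n(L)$-term or the $H_0(L)$-term would have to pass through the vanishing groups $H_{jN_L}(L)$, $0<j<\nu$, ``because the differentials move by steps that are multiples of $N_L$''. This is false: the pearl differential decomposes as $d=\sum_r \partial_r\otimes t^r$ with $\partial_r$ shifting the Morse (i.e.\ singular) degree by $rN_L-1$, so the $r$-th page differential connects singular homology groups in degrees differing by $rN_L-1$, not by $rN_L$. Concretely, on the diagonal computing $QH_n(L;\Lambda_{\mathbb{Q}})$ the term $H_n(L)\otimes P_0$ can receive a differential originating from $H_{(\nu-r)N_L+1}(L)\otimes P_{rN_L}$, and the term $H_0(L)\otimes P_{n}$ can emit a differential landing in $H_{rN_L-1}(L)\otimes P_{(\nu-r)N_L}$; these groups live in degrees congruent to $\pm 1 \pmod{N_L}$ and are not controlled by hypothesis $(2)$. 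Hence the spectral sequence alone gives only the upper bound $\dim_{\mathbb{Q}} QH_n(L;\Lambda_{\mathbb{Q}})\le 2$ (on this part your argument agrees with the paper's).

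The lower bound is where hypothesis $(1)$ genuinely enters, and your ``alternatively'' clause points at the right mechanism without carrying it out. The paper argues: first, $e_L\neq 0$ because $i_L(e_L)=[L]\neq 0$; second, by Poincar\'e duality one picks $c\in H_n(M;\mathbb{Q})$ with $c\cdot[L]\neq 0$ and sets $x=c*e_L\in QH_0(L;\Lambda_{\mathbb{Q}})$, so that $\epsilon_L(x)=\#(c\cdot[L])\neq 0$ by~\eqref{eq:kron-aug} while the augmentation vanishes on $e_L$; therefore $\{e_L,\,xt^{-\nu}\}$ is linearly independent in $QH_n(L;\Lambda_{\mathbb{Q}})$ and the rank is exactly $2$. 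For Lagrangian spheres your bookkeeping happens to be harmless since all intermediate homology vanishes, but for the general statement of the proposition the survival argument as you wrote it does not close, and the quantum-inclusion/augmentation step must be made the actual proof of the lower bound rather than a side remark.
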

The proof appears in~\S\ref{sb:prf-p-criterion}.

We now provide a sample of examples. More details will be given
in~\S\ref{s:examples}

\subsubsection{Lagrangian spheres in blow-ups of $\mathbb{C}P^2$}
\label{sbsb:intro-exp-blcp2}

Let $(M_k, \omega_k)$ be the monotone symplectic blow-up of
${\mathbb{C}}P^2$ at $2 \leq k \leq 6$ points. We normalize $\omega_k$
so that it is cohomologous to $c_1$. Denote by $H \in H_2(M_k)$ the
homology class of a line not passing through the blown up points and
by $E_1, \ldots, E_k \in H_2(M_k)$ the homology classes of the
exceptional divisors over the blown up points. With this notation the
Poincar\'{e} dual of the cohomology class of the symplectic form
$[\omega_k] \in H^2(M_k)$ satisfies $$PD [\omega_k] = PD(c_1) = 3H -
E_1 - \cdots - E_k.$$

The Lagrangian spheres $L \subset M_k$ lie in the following homology
classes (see~\S\ref{sb:lag-spheres-blow-ups} for more details):
\begin{enumerate}
  \item For $k=2$: $\pm (E_1 - E_2)$.
  \item For $2 \leq k \leq 5$: $\pm(E_i - E_j)$, $i < j$, and $\pm(H -
   E_i - E_j - E_l)$ with $i<j<l$.
  \item For $k=6$ we have the same homology classes as in~$(2)$ and in
   addition the class $\pm(2H - E_1 - \cdots - E_6)$.
\end{enumerate}
Note that all these Lagrangian spheres satisfy
Assumption~$\mathscr{L}$ since $N_L = 2$.

The discriminants of these Lagrangian spheres are gathered in
Table~\ref{tb:classes-intro}, the detailed computations being
postponed to~\S\ref{s:examples}. The column under $\lambda_L$ will be
explained in~\S\ref{sb:eigneval}.

\begin{table}[h]
   \begin{center}
      \begin{tabular}{l | l | r | r}
         & $[L]$ & $\Delta_L$ & $\lambda_L$ \\
         \hline
         $M_2$ & $\pm(E_1 - E_2)$ & 5 & -1 \\ 
         \hline
         $M_3$ & $\pm(E_i - E_j)$ & 4 & -2 \\ 
         & $\pm(H - E_1 - E_2 - E_3)$ & -3 & -3 \\ 
         \hline 
         $M_4$ & $\pm(E_i - E_j)$ & 1 & -3 \\ 
         & $\pm(H - E_i - E_j - E_l)$ & 1 & -3 \\ 
         \hline
         $M_5$ & $\pm(E_i - E_j)$ & 0 & -4 \\ 
         & $\pm(H - E_i - E_j - E_l)$ & 0 & -4 \\   
         \hline
         $M_6$ & $\pm(E_i - E_j)$ & 0 & -6 \\ 
         & $\pm(H - E_i - E_j - E_l)$ & 0 & -6 \\ 
         & $\pm(2H - E_1 - \ldots - E_6)$ & 0 & -6
      \end{tabular}
      \vspace{4mm}
      \caption{Classes representing Lagrangian spheres and their
        discriminants.}
      \label{tb:classes-intro}
   \end{center}
\end{table}

The Lagrangian spheres in the three homology classes $E_i - E_j$,
$i<j$, of $M_3$ all have the same discriminant. This can also be seen
by noting that one can choose three Lagrangian spheres $L_1, L_2,
L_3$, one in each of these homology classes so that every pair of them
intersects transversely at exactly one point. The equality of their
discriminants as well (as the fact that they are perfect squares)
follows then by Corollary~\ref{c:del_1=del_2}. We elaborate more on
these examples in~\S\ref{s:examples}.

\subsubsection{Lagrangian spheres in hypersurfaces of
  ${\mathbb{C}}P^{n+1}$} \label{sbsb:exp-intro-hypsurf} Let $M^{2n}
\subset \mathbb{C}P^{n+1}$ be a hypersurface of degree $d \leq n+1$
endowed with the induced symplectic form. By the assumption on $d$,
$M$ is monotone (in fact Fano) and the minimal Chern number is $C_M =
n+2-d$. Note that when $d \geq 2$, $M$ contains Lagrangian spheres.
Assume further that $n\geq 3$, and $d \geq 3$.  Let $L \subset M$ be a
Lagrangian sphere, hence $[L]$ belongs to the primitive homology of
$M$ (see~\cite{Gr-Ha:alg-geom, Voi:hodge-book-I}). Using the
description of the quantum homology of $M$
from~\cite{Co-Ji:QH-hpersurf, Gi:equivariant} we obtain $[L]^{*3}=0$.

Whenever $n$ is a multiple of $2C_M = 2(n+2-d)$ the Lagrangian spheres
$L \subset M$ satisfy Assumption~$\mathscr{L}$, hence the discriminant
is defined and we obtain $\Delta_L = 0$.

Consider now the case $d=2$, i.e. $M$ is the quadric of complex
dimension $n$, and let $S \subset M$ be a Lagrangian sphere. We have
$C_M = n$, so case~\eqref{i:div} of Theorem~\ref{t:cubic-eq-sphere}
applies. If $n=$~odd, then $H_n(M)=0$, hence $[S]=0$. If $n=$~even,
then from the quantum product in the quadric we obtain:
\[ [S]^{*3} = (-1)^{\frac{n(n-1)}{2}+1}4 [S] q^n.
\]

More details on all the above calculations are given
in~\S\ref{s:examples}.

\subsection*{Acknowledgments}
We would like to thank Jean-Yves Welschinger for a discussion
convincing us that all Lagrangian tori in symplectic $4$-manifolds
with $b_2^+=1$ are null-homologous.


\subsection*{Organization of the paper}
The rest of the paper is organized as follows.
In~\S\ref{s:floer-setting} we briefly recall the necessary ingredients
from Lagrangian Floer and quantum homologies used in the sequel.
In~\S\ref{sb:more-discr} we also give more details on the
discriminant. \S\ref{s:lag-cubic-eq} is devoted to the Lagrangian
cubic equation. We prove in that section more general versions of
Theorems~\ref{t:cubic-eq} and~\ref{t:rel-to-discr}. Then
in~\S\ref{sb:prf-cubic-eq-S} we prove Theorem~\ref{t:cubic-eq-sphere}.
We also prove in~\S\ref{sb:further-res} additional corollaries derived
from these theorems. In~\S\ref{s:disc-lcob} we study the discriminant
in the realm of Lagrangian cobordism and prove Theorem~\ref{t:cob} and
Corollary~\ref{c:del_1=del_2}.  \S\ref{s:examples} is dedicated to
examples. We briefly explain how to construct Lagrangian spheres in
various homology classes on symplectic Del Pezzo surfaces and carry
out the calculation of the discriminants of those Lagrangians. We
discuss some higher dimensional examples too.  In~\S\ref{s:coeffs} we
explain an extension of the discriminant and the Lagrangian cubic
equation over a more general ring of coefficients that takes into
account the different homology classes of the holomorphic curves that
contribute to our invariants.  In~\S\ref{sb:examples-revisited} we
recalculate some of the examples from~\S\ref{s:examples} over this
ring. In~\S\ref{s:enum} we discuss the relation of the discriminant to
the enumerative geometry of holomorphic disks.  Finally,
in~\S\ref{s:non-monotone} we consider the non-monotone case and state
a version of Theorem~\ref{t:cubic-eq-sphere} for not necessarily
monotone Lagrangian 2-spheres.

\section{Lagrangian Floer theory} \label{s:floer-setting} \cntrs

Here we briefly recall some ingredients from Floer theory that are
relevant for this paper. These include Lagrangian Floer homology and
especially its realization as Lagrangian quantum homology (a.k.a pearl
homology). The reader is referred to~\cite{Oh:HF1, Oh:spectral,
  FO3:book-vol1, FO3:book-vol2, Bi-Co:rigidity, Bi-Co:lagtop} for more
details.

\subsection{Monotone symplectic manifolds and Lagrangians} \cntrsb
\label{sb:monotone}

Let $(M, \omega)$ be a symplectic manifold. Denote by $c_1 \in H^2(M)$
the first Chern class of the tangent bundle $T(M)$ of $M$. Denote by
$H_2^S(M)$ the image of the Hurewicz homomorphism $\pi_2(M)
\longrightarrow H_2(M)$. We call $(M, \omega)$ {\em monotone} if there
exists a constant $\vartheta>0$ such that
$$A_{\omega} = \vartheta I_{c_1},$$ where
$A_{\omega}: H_2^S(M) \longrightarrow \mathbb{R}$ is the homomorphism
defined by integrating $\omega$ over spherical classes and $I_{c_1}$
is viewed as a homomorphism $H_2^S(M) \longrightarrow \mathbb{Z}$. We
denote by $C_M$ the positive generator of the subgroup
$\textnormal{image\,} I_{c_1} \subset \mathbb{Z}$ so that
$\textnormal{image\,} I_{c_1} = C_M \mathbb{Z}$. If
$\textnormal{image\,} I_{c_1} = 0$ we set $C_M = \infty$.

$L \subset M$ a Lagrangian submanifold. Denote by $H_2^D(M,L)$ the
image of the Hurewicz homomorphism $\pi_2(M,L) \longrightarrow
H_2(M,L)$.  We say that $L$ is {\em monotone} if there exists a
constant $\rho >0$ such that
$$A_{\omega} = \rho \mu,$$ where $A_{\omega}:H_2^D(M,L) \longrightarrow
\mathbb{R}$ is the homomorphism defined by integrating $\omega$ over
homology classes and $\mu: H_2^D(M,L) \longrightarrow \mathbb{Z}$ is
the Maslov index homomorphism. We denote by $N_L$ the positive
generator of the subgroup $\textnormal{image\,} \mu \subset
\mathbb{Z}$ so that $\textnormal{image\,}\mu = N_L \mathbb{Z}$.

Finally, denote by $j: H_2^S(M) \longrightarrow H_2^D(M,L)$ the
obvious homomorphism. Then we have $\mu(j(A)) = 2I_{c_1}(A)$ for every
$A \in H_2^S(M)$. Therefore, if $L$ is a monotone Lagrangian and
$I_{c_1} \neq 0$ then $(M, \omega)$ is also monotone and we have $N_L
\mid 2C_M$. When $\pi_1(L) = \{1\}$ we actually have $N_L = 2C_M$.

\subsection{Floer homology and Lagrangian quantum homology}
\label{sb:HF} \cntrsb

Let $L \subset M$ be a closed monotone Lagrangian submanifold with $2
\leq N_L \leq \infty$. Under the additional assumptions that $L$ is
spin one can define the self Floer homology $HF(L,L)$ with
coefficients in $\mathbb{Z}$. This group is cyclically graded, with
grading in $\mathbb{Z} / N_L \mathbb{Z}$.

From the point of view of the present paper it is more natural to work
with Lagrangian quantum homology $QH(L)$ rather than with the Floer
homology $HF(L,L)$. This is justified by the fact that for an
appropriate choice of coefficients we have an isomorphism of rings
$QH(L) \cong HF(L,L)$. The advantage of $QH(L)$ in our context is that
it bears a simple and explicit relation to the singular homology
$H(L)$ of $L$. For example, under certain circumstances (relevant for
our considerations) and with the right coefficient ring, $QH(L)$ can
be viewed as a deformation of the singular homology ring $H(L)$
endowed with the intersection product.

We will now summarize the most basic properties of Lagrangian quantum
homology. The reader is referred to~\cite{Bi-Co:rigidity,
  Bi-Co:lagtop} for the foundations of the theory.

Denote by $\Lambda = \mathbb{Z}[t^{-1}, t]$ the ring of Laurent
polynomials over $\mathbb{Z}$ graded so that the degree of $t$ is
$|t|=-N_L$. We denote by $QH^{\#}(L)$ the Lagrangian quantum homology
of $L$ with coefficients in $\mathbb{Z}$ and by $QH(L; \Lambda)$ the
one with coefficients in $\Lambda$. Thus $QH^{\#}(L)$ is cyclically
graded modulo $N_L$ and $QH(L;\Lambda)$ is $\mathbb{Z}$-graded and
$N_L$-periodic, i.e.  $QH_i(L;\Lambda) \cong QH_{i-N_L}(L;\Lambda)$,
the isomorphism being given by multiplication by $t$. And we have
$QH_i(L; \Lambda) \cong QH^{\#}_{i \pmod{N_L}}(L)$, hence the grading
on $QH(L;\Lambda)$ is an unwrapping of the cyclic grading of
$QH^{\#}(L)$. Sometimes, when the context is clear we will write
$QH(L)$ for $QH(L;\Lambda)$.

The Lagrangian quantum homology has the following algebraic
structures. There exists a quantum product $$QH_i(L;\Lambda) \otimes
QH_j(L;\Lambda) \longrightarrow QH_{i+j-n}(L;\Lambda), \quad \alpha
\otimes \beta \longmapsto \alpha * \beta,$$ which turns
$QH(L;\Lambda)$ into a unital associative ring with unity $e_L \in
QH_n(L;\Lambda)$.

We now briefly recall relations between the Lagrangian and ambient
quantum homologies. Denote by $R = \mathbb{Z}[q^{-1}, q]$ the ring of
Laurent polynomials in the variable $q$, whose degree we set to be
$|q|=-2$. Denote by $QH(M;R)$ the quantum homology of $M$ with
coefficients in $R$, endowed with the quantum product $*$. The
Lagrangian quantum homology $QH(L; \Lambda)$ is a module over the
subring $QH(M; \Lambda) \subset QH(M;R)$, where $\Lambda$ is embedded
in $R$ by $t \mapsto q^{N_L/2}$. We denote this operation by
$$QH_i(M; \Lambda) \otimes QH_j(L; \Lambda)
\longrightarrow QH_{i+j-2n}(L; \Lambda), \quad a \otimes \alpha
\longmapsto a*\alpha.$$ The reason for using the same notation $*$ as
for the quantum product on $L$ is that the module operation is
compatible with the latter in the following sense:
\begin{equation} \label{eq:alg-identity} c*(\alpha*\beta) =
   (c*\alpha)*\beta = (-1)^{(2n-|c|)(n-|\alpha|)}\alpha*(c*\beta),
\end{equation}
for every $c \in QH(M;\Lambda)$, $\alpha, \beta \in QH(L;\Lambda)$.
\pbred{Note that the sign conventions in~\eqref{eq:alg-identity} are
  compatible with the standard sign conventions for the intersection
  product in singular homology.}

The proof of identity~\eqref{eq:alg-identity} has been carried out
in~\cite{Bi-Co:qrel-long, Bi-Co:rigidity} over $\mathbb{Z}_2$ (hence
without taking signs into account), and the same proof carries over in
a straightforward way over $\mathbb{Z}$ using~\cite{Bi-Co:lagtop}.
Thus $QH(L;\Lambda)$ is an algebra (in the graded sense) over
$QH(M;\Lambda)$.

There is also a quantum inclusion map
$$i_L: QH_i(L; \Lambda) \longrightarrow QH_i(M; \Lambda),$$ which
is linear over the ring $QH(M; \Lambda)$, i.e. $i_L(c*\alpha) =
c*i_L(\alpha)$ for every $c \in QH(M;\Lambda)$ and $\alpha \in
QH(L;\Lambda)$. An important property of $i_L$ is that $i_L(e_L) =
[L]$, see~\cite{Bi-Co:lagtop}.

Next there is an augmentation morphism
$$\epsilon_L: QH(L;\Lambda) \longrightarrow \Lambda,$$ which is
induced from a chain level extension of the classical augmentation.
The augmentation satisfies the following identity,
see~\cite{Bi-Co:rigidity}:
\begin{equation} \label{eq:kron-aug} \langle PD(h), i_L(\alpha)
   \rangle = \epsilon_L(h * \alpha), \quad \forall h \in H_*(M), \;
   \alpha \in QH(L; \Lambda),
\end{equation}
where $PD$ stands for Poincar\'{e} duality and $\langle \cdot, \cdot
\rangle$ denotes the Kronecker pairing extended over $\Lambda$ in an
obvious way. Sometimes it will be more convenient to view the
augmentation as a map
$$\widetilde{\epsilon}_L: QH(L;\Lambda) \longrightarrow H_0(L;\Lambda)
= \Lambda [\textnormal{point}].$$ This augmentations $\epsilon_L$ and
$\widetilde{\epsilon}_L$ descend also to $QH^{\#}(L)$ and by slight
abuse of notation we denote them the same:
$$\epsilon_L: QH^{\#}(L) \longrightarrow \mathbb{Z}, \quad
\widetilde{\epsilon}_L: QH^{\#}(L) \longrightarrow H_0(L).$$

As mentioned earlier we will not really use Floer homology in this
paper, but Lagrangian quantum homology instead. The justification for
replacing $HF(L,L)$ by $QH^{\#}(L)$ is due to the PSS isomorphism
$$PSS: HF_*(L,L) \longrightarrow QH^{\#}_*(L).$$ This is a ring
isomorphism which intertwines the Donaldson product and the quantum
product on $QH^{\#}(L)$.  A version of $PSS$ works with coefficients
in $\Lambda$ too. For more details on the PSS isomorphism
see~\cite{Alb:PSS, Bar-Cor:NATO, Cor-La:Cluster-1, Bi-Co:rigidity}.
See also~\cite{Hu-La:Seidel-morph, Hu-La-Le:monodromy} for the
extension to $\mathbb{Z}$-coefficients.

Finally, we remark that everything mentioned above in this section
continues to hold (with obvious modifications) also with other choices
of base rings, replacing $\mathbb{Z}$ by $\mathbb{Q}$ or $\mathbb{C}$.
For $K = \mathbb{Q}$ or $\mathbb{C}$ we write $\Lambda_K =
K[t^{-1},t]$, $R_{K} = K[q^{-1}, q]$ for the associated rings of
Laurent polynomials and by $HF(L,L; \Lambda_K)$, $QH(L;\Lambda_K)$ and
$QH(M; R_K)$ the corresponding homologies.  Sometimes it will be
useful to drop the Laurent polynomial rings $\Lambda_K$ and $R_K$ and
simply work with $HF(L,L;K)$, $QH(L;K)$ and $QH(M;K)$. Another
variation that will be used in the sequel is to replace $\Lambda_K$
and $R_K$ by polynomial rings (rather than Laurent polynomials), i.e.
work with coefficients in $\Lambda^+_K = K[t]$ and $R^+_K = K[q]$.
See~\cite{Bi-Co:rigidity, Bi-Co:Yasha-fest, Bi-Co:lagtop} for a
detailed account on this choice of coefficients. When the base ring
$K$ is obvious we will abbreviate $Q^+H(L) := QH(L; \Lambda^+_K)$ and
similarly for $Q^+H(M)$. (There has been only one exception to this
notation. In the introduction~\S\ref{s:intro} we denoted by $QH(M)$
the quantum homology $QH(M; R^+)$ in order to facilitate the notation,
but henceforth we will stick to the notation we have just described.)
The homologies of the type $Q^+H$ will be called {\em positive quantum
  homologies}. Again, everything described above continues to work for
the positive versions of quantum homologies with one important
exception: the PSS isomorphism does not hold over $\Lambda^+_K$ (at
least not for a straightforward version of Floer homology).

\subsection{Proof of Proposition~\ref{p:criterion}}
\label{sb:prf-p-criterion} \cntrsb
The proof appeals to a spectral sequence for calculating Lagrangian
quantum homology which is rather standard in symplectic topology. For
the sake of readability we have included in~\S\ref{s:app-calc} a
summary of the main ingredients of this technique.

Let $\{E^r_{p,q}, d^r\}_{_{r \geq 0}}$ be the spectral sequence
described in~\S\ref{sb:spec-seq}. By Theorem~\ref{t:spectral-seq} and
the assumptions of Proposition~\ref{p:criterion} we see that the $E^1$
terms of the sequence has the following form:
\[
\bigoplus_{p + q = n} E^1_{p,q} = (H_n(L; \mathbb{Q})\otimes P_0)
\oplus (H_0(L; \mathbb{Q})\otimes P_{n}).
\]
It now follows easily that the dimension of
$QH_n(L;\Lambda_{\mathbb{Q}})$ as a $\mathbb{Q}$-vector space is at
most $2$. We will now show that the dimension is exactly $2$.

We first claim that the unity is not trivial, $e_L \neq 0 \in
QH_n(L;\Lambda_{\mathbb{Q}})$. To see this consider the quantum
inclusion map $i_L:QH_n(L;\Lambda_{\mathbb{Q}}) \longrightarrow
QH_n(M;R_{\mathbb{Q}})$ from~\S\ref{sb:HF}. It is well
known~\cite{Bi-Co:lagtop} that $i_L(e_L) = [L]$. As $[L] \neq 0$ it
follows that $e_L \neq 0$.

By Poincar\'{e} duality there exists a class $c \in H_n(M;\mathbb{Q})$
such that $c \cdot [L] \neq 0$. Put $x: = c * e_L \in
QH_0(L;\Lambda_{\mathbb{Q}})$. From~\eqref{eq:kron-aug} we get that
$\epsilon_L(x) \neq 0$. This implies that the two elements $xt^{-\nu},
e_L \in QH_n(L; \Lambda_{\mathbb{Q}})$ are linearly independent. It
follows that $\dim QH_n(L;\Lambda_{\mathbb{Q}}) = 2$.

From the above it now follows that the rank of of $QH^{\#}_n(L)$ is
$2$.  Finally, from the PSS isomorphism we obtain that $HF_n(L,L)$ has
rank $2$. \Qed

\subsection{Eigenvalues of $c_1$ and Lagrangian submanifolds}
\label{sb:eigneval} \cntrsb

Let $L \subset M$ be a closed spin monotone Lagrangian submanifold
with $QH(L; \mathbb{C}) \neq 0$. Assume in addition that $N_L=2$. With
these assumptions one can define an invariant $\lambda_L \in
\mathbb{Z}$ which counts the number of Maslov-$2$ pseudo-holomorphic
disks $u:(D,
\partial D) \longrightarrow (M,L)$ whose boundary $u(\partial D)$ pass
through a generic point $p \in L$. The value of $\lambda_L$ turns out
to be independent of the almost complex structure as well as of the
generic point $p$.  See~\cite{Bi-Co:lagtop} for more details.  We
extend the definition of $\lambda_L$ to the case $N_L > 2$ by setting
$\lambda_L=0$.

Consider now the following operator $$P: QH(L;\Lambda_\mathbb{C})
\longrightarrow QH(L; \Lambda_{\mathbb{C}}), \quad \alpha \longmapsto
PD(c_1)*\alpha,$$ where $PD$ stands for Poincar\'{e} duality. By abuse
of notation we have denoted here by $c_1 \in H^2(M; \mathbb{C})$ the
image of the first Chern class of $T(M)$ under the change of
coefficients map $H^2(M;\mathbb{Z}) \to H^2(M;\mathbb{C})$.

The following is well known:
\begin{enumerate}
  \item If $N_L = 2$, then $P(\alpha) = \lambda_L \alpha t$ for every
   $\alpha \in QH(L;\Lambda_{\mathbb{C}})$.
  \item If $N_L > 2$, then $P \equiv 0$.
\end{enumerate}
For the proof of~$(1)$, See~\cite{Aur:t-duality} for a special case
(where the statement is attributed to folklore, in particular also to
Kontsevich and to Seidel) and~\cite{Sher:Fano} for the general case.
As for~$(2)$, it follows immediately from the fact that the
restriction of $c_1$ to $L$ vanishes, $c_1|_{L} = 0 \in
H^2(L;\mathbb{C})$, together with degree reasons.

Denote by $\mathcal{I}_L \subset QH(M;R_{\mathbb{C}})$ the image of
the quantum inclusion map $i_L: QH(L;\Lambda_{\mathbb{C}})
\longrightarrow QH(M;R_{\mathbb{C}})$. Note that $\mathcal{I}_L$ is an
ideal of the ring $QH(M; R_{\mathbb{C}})$.

\begin{prop} \label{p:I_L-lambda} $\mathcal{I}_L \neq 0$ iff $QH(L;
   \Lambda_{\mathbb{C}}) \neq 0$ and in that case $\lambda_L$ is an
   eigenvalue of the operator
   $$Q: QH(M; R_{\mathbb{C}}) \longrightarrow QH(M; R_{\mathbb{C}}),
   \quad a \longmapsto PD(c_1)*a q^{-1}.$$ Moreover, $\mathcal{I}_L$
   is a subspace of the eigenspace of $Q$ corresponding to the
   eigenvalue $\lambda_L$. In particular if $[L] \neq 0 \in
   H_n(M;\mathbb{C})$ then $[L]$ is an eigenvector of $Q$
   corresponding to $\lambda_L$.
\end{prop}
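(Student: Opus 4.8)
The plan is to deduce everything from the module structure of $QH(L;\Lambda_{\mathbb{C}})$ over $QH(M;\Lambda_{\mathbb{C}})$ together with the two facts recalled above about the operator $P$. First I would establish the equivalence $\mathcal{I}_L \neq 0 \iff QH(L;\Lambda_{\mathbb{C}}) \neq 0$. The implication $\Leftarrow$ is the nontrivial direction: if $QH(L;\Lambda_{\mathbb{C}}) \neq 0$, then since $e_L \in QH_n(L;\Lambda_{\mathbb{C}})$ is the unit of a unital ring, $e_L \neq 0$, and by $i_L(e_L) = [L]$ together with the fact (used already in the proof of Proposition~\ref{p:criterion}) that $i_L$ detects nonvanishing on the unit — more precisely, one can pair against a suitable $c \in H_n(M;\mathbb{C})$ using~\eqref{eq:kron-aug} to see $i_L(e_L) \neq 0$, or simply invoke $[L] \neq 0$ when $\chi \neq 0$; in general one shows $i_L$ is injective on the line spanned by $e_L$ by the augmentation argument — we conclude $\mathcal{I}_L \neq 0$. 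The reverse implication $\Rightarrow$ is immediate: $\mathcal{I}_L$ is the image of $i_L$, so if the source is zero the image is zero.

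Next, assuming $QH(L;\Lambda_{\mathbb{C}}) \neq 0$, I would show $\mathcal{I}_L$ lies in the $\lambda_L$-eigenspace of $Q$. Take any $a = i_L(\alpha) \in \mathcal{I}_L$ with $\alpha \in QH(L;\Lambda_{\mathbb{C}})$. Using $\Lambda$-linearity of $i_L$ over $QH(M;\Lambda_{\mathbb{C}})$, namely $i_L(c*\alpha) = c*i_L(\alpha)$, applied to $c = PD(c_1)$, we get
\begin{equation*}
Q(a) = PD(c_1)*a\,q^{-1} = i_L\bigl(PD(c_1)*\alpha\bigr)q^{-1} = i_L\bigl(P(\alpha)\bigr)q^{-1}.
\end{equation*}
Now invoke the dichotomy: if $N_L = 2$, then $P(\alpha) = \lambda_L\,\alpha\, t$, and under the embedding $t \mapsto q^{N_L/2} = q$ this gives $i_L(P(\alpha))q^{-1} = \lambda_L\, i_L(\alpha)\, q\cdot q^{-1} = \lambda_L\, a$; if $N_L > 2$, then $P \equiv 0$ and $\lambda_L = 0$ by our convention, so $Q(a) = 0 = \lambda_L\, a$. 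In both cases $Q(a) = \lambda_L\, a$, so $\mathcal{I}_L$ is contained in the $\lambda_L$-eigenspace of $Q$; since $\mathcal{I}_L \neq 0$ this shows $\lambda_L$ is genuinely an eigenvalue. Finally, when $[L] \neq 0 \in H_n(M;\mathbb{C})$, the element $[L] = i_L(e_L)$ is a nonzero member of $\mathcal{I}_L$, hence a $\lambda_L$-eigenvector of $Q$.

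The main subtlety — rather than a deep obstacle — is bookkeeping the grading and the coefficient embedding $\Lambda \hookrightarrow R$, $t \mapsto q^{N_L/2}$, so that the factor of $t$ appearing in $P(\alpha) = \lambda_L \alpha t$ is correctly converted to the factor $q$ that cancels the $q^{-1}$ in the definition of $Q$; one must check that $|PD(c_1)| = 2n - 2$ so that $PD(c_1)*\alpha$ lands in the expected degree and the map $i_L$ is degree-preserving on the relevant pieces. One should also be slightly careful in the $\Leftarrow$ direction of the first equivalence: a priori $e_L$ could map to something that vanishes only after a torsion quotient, but over $\mathbb{C}$ there is no torsion, and the pairing argument via~\eqref{eq:kron-aug} — choosing $h = PD(c)$ with $c\cdot[L]\neq 0$ — shows $\epsilon_L(h*e_L) = \langle c, [L]\rangle \neq 0$, forcing $i_L(e_L) \neq 0$ and hence $[L] \neq 0$; that $\mathcal{I}_L$ is an ideal (already noted in the text) is not needed for the eigenvalue statement but confirms the structural picture.
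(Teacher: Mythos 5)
Your treatment of the eigenvalue statement is correct and coincides with the paper's: one writes $a=i_L(\alpha)$, uses that $i_L$ is a $QH(M)$-module map to get $Q(a)=i_L(P(\alpha))q^{-1}$, and then invokes the dichotomy $P(\alpha)=\lambda_L\alpha t$ (for $N_L=2$, with $t\mapsto q$) versus $P\equiv 0$ (for $N_L>2$, where $\lambda_L=0$ by convention). That part needs no change.

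However, there is a genuine gap in your proof of the implication $QH(L;\Lambda_{\mathbb{C}})\neq 0\Rightarrow \mathcal{I}_L\neq 0$. You argue that $e_L\neq 0$ and then try to show $i_L(e_L)=[L]\neq 0$ by pairing against a class $c$ with $c\cdot[L]\neq 0$; but the proposition does not assume $[L]\neq 0\in H_n(M;\mathbb{C})$ (its last sentence, ``in particular if $[L]\neq 0$\dots'', makes clear that $[L]=0$ is allowed, and e.g.\ monotone Lagrangian tori typically have $QH(L)\neq 0$ yet $[L]=0$). When $[L]=0$ no such $c$ exists, $i_L(e_L)=[L]=0$, and your claim that ``$i_L$ is injective on the line spanned by $e_L$'' is simply false. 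The paper avoids $e_L$ altogether: by duality for Lagrangian quantum homology (Proposition~4.4.1 of~\cite{Bi-Co:rigidity}, extended to field coefficients via~\cite{Bi-Co:lagtop}) there exists $x\in QH_0(L;\Lambda_{\mathbb{C}})$ with $\epsilon_L(x)\neq 0$, and then~\eqref{eq:kron-aug} applied with $h=[M]$ and $\alpha=x$ gives $\langle PD([M]),i_L(x)\rangle=\epsilon_L(x)\neq 0$, hence $i_L(x)\neq 0$ and $\mathcal{I}_L\neq 0$. You need this (or an equivalent nondegeneracy input) to close the argument; without it, the claim that $\lambda_L$ is ``genuinely an eigenvalue'' also remains unproved in the case $[L]=0$.
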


\begin{rem}
   Denote by $Q': QH(M;\mathbb{C}) \longrightarrow QH(M;\mathbb{C})$
   the same operator as $Q$ but acting on $QH(M;\mathbb{C})$ instead
   of $QH(M;\Lambda_{\mathbb{C}})$. Similarly, denote by
   $\mathcal{I}'_L \subset QH(M;\mathbb{C})$ the image of $i_L$.  The
   statement of Proposition~\ref{p:I_L-lambda} continues to hold for
   $Q'$ and $\mathcal{I}'_L$. Moreover, if $[L] \neq 0$ then
   $$\dim_{\mathbb{C}} \mathcal{I}'_L \geq 2,$$ hence the multiplicity of the
   eigenvalue $\lambda_L$ with respect to the operator $Q'$ is at
   least $2$. Indeed, $[L] = i_L(e_L) \in \mathcal{I}'_L$. Now take $c
   \in H_n(M;\mathbb{C})$ with $c \cdot [L] \neq 0$. As
   $\mathcal{I}'_L$ is an ideal we have $c*[L] \in \mathcal{I}'_L$.
   But $c*[L] = \#(c \cdot [L]) [\textnormal{point}] +
   (\textnormal{other terms})$, hence $c*[L]$ is not proportional to
   $[L]$. (Here $\#(c \cdot [L])$ stands for the intersection number
   of $c$ and $[L]$.)
\end{rem}

\begin{proof}[Proof of Proposition~\ref{p:I_L-lambda}]
   Assume that $QH(L;\Lambda_{\mathbb{C}}) \neq 0$. By duality for
   Lagrangian quantum homology there exists $x \in
   QH_0(L;\Lambda_{\mathbb{C}})$ with $\epsilon_L(x) \neq 0$.
   (See~\cite{Bi-Co:rigidity}, Proposition~4.4.1. The proof there is
   done over $\mathbb{Z}_2$ but the extension to any field is
   straightforward in view of~\cite{Bi-Co:lagtop}).

   From~\eqref{eq:kron-aug} (with $h=[M]$ and $\alpha = x$) it follows
   that $i_L(x) \neq 0$, hence $\mathcal{I}_L \neq 0$. The opposite
   assertion is obvious.

   The statement about the eigenspace of $Q$ follows immediately from
   the discussion about the operator $P$ and the fact that $i_L$ is a
   $QH(M;R_{\mathbb{C}})$-module map.

   Finally, note that $[L] \in \mathcal{I}_L$ since $[L] = i_L(e_L)$.
\end{proof}

The following observation shows that the eigenvalues corresponding to
different Lagrangians coincide under certain circumstances.
\begin{prop} \label{p:lambda-lambda'} Let $L, L' \subset M$ be two
   closed monotone spin Lagrangian submanifolds. Assume that $[L]
   \cdot [L]' \neq 0$. Then $\lambda_{L} = \lambda_{L'}$.
\end{prop}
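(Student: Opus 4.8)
The plan is to exploit the fact, established in Proposition~\ref{p:I_L-lambda}, that for a closed monotone spin Lagrangian $L$ with $[L]\neq 0$ the class $[L]\in QH(M;R_{\mathbb C})$ is an eigenvector of the operator $Q\colon a\mapsto PD(c_1)*a\,q^{-1}$ with eigenvalue $\lambda_L$ (and similarly for $L'$). So I would first observe that, since $[L]\cdot[L']\neq 0$, both classes are nonzero in $H_n(M;\mathbb C)$, hence Proposition~\ref{p:I_L-lambda} applies to each: we have $Q([L])=\lambda_L[L]$ and $Q([L'])=\lambda_{L'}[L']$. The idea is then to produce a single nonzero vector that is simultaneously forced to be an eigenvector for both eigenvalues, which immediately gives $\lambda_L=\lambda_{L'}$.

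The natural candidate is $[L]*[L']\in QH(M;R_{\mathbb C})$. First I would check it is nonzero: by the structure of the quantum product, $[L]*[L'] = \#([L]\cdot[L'])\,[\mathrm{point}] + (\text{higher } q\text{-order terms})$, and the leading coefficient is the intersection number, which is nonzero by hypothesis; hence $[L]*[L']\neq 0$. Next I would compute $Q$ applied to this product using the module/associativity identity for the quantum product: $Q([L]*[L']) = (PD(c_1)*[L])*[L']\,q^{-1} = (Q([L]))*[L'] = \lambda_L\,[L]*[L']$, using that $Q$ is essentially multiplication by the central element $PD(c_1)q^{-1}$ and the quantum product is associative and graded-commutative. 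Symmetrically, $Q([L]*[L']) = [L]*(PD(c_1)*[L']\,q^{-1}) = [L]*(Q([L'])) = \lambda_{L'}\,[L]*[L']$. Comparing the two expressions and using $[L]*[L']\neq 0$ yields $\lambda_L = \lambda_{L'}$.

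I expect the only subtle point — and the main thing to verify carefully — is that $Q$ really does commute past the quantum product in the two ways claimed, i.e. that $PD(c_1)q^{-1}$ behaves as a central multiplier on $QH(M;R_{\mathbb C})$; this is just associativity and commutativity of the ambient quantum ring together with the fact that $c_1$ lives in even degree (so no sign issue arises), but it is what makes the two-sided computation legitimate. A secondary point is the non-vanishing of $[L]*[L']$, which as noted follows from reading off the lowest-order term of the quantum product, exactly as in the Remark following Proposition~\ref{p:I_L-lambda}. Everything else is a direct comparison of scalars, so there is no real obstacle beyond bookkeeping.
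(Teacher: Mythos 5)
Your proposal is correct and follows essentially the same route as the paper: apply $PD(c_1)*(-)$ to $[L]*[L']$, use associativity and the even degree of $PD(c_1)$ to move it onto either factor, and conclude from $[L]*[L']\neq 0$, which holds because the classical (lowest $q$-order) part of $[L]*[L']$ is $\#([L]\cdot[L'])\,[\textnormal{point}]\neq 0$. The points you flag as needing care are exactly the ones the paper relies on, so there is nothing to add.
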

\begin{proof}
   We view $[L], [L']$ as elements of $QH_n(M;\mathbb{C})$. We have
   $$\textnormal{PD}(c_1)*([L]*[L']) = (\textnormal{PD}(c_1)*[L])*[L'] =
   \lambda_{L}[L]*[L'].$$ At the same time, since
   $|\textnormal{PD}(c_1)| = $ even we also have
   $$\textnormal{PD}(c_1)*([L]*[L']) = [L]*(\textnormal{PD}(c_1)*[L']) =
   \lambda_{L'} [L]*[L'].$$ Since $[L]\cdot [L'] \neq 0$ we have
   $[L]*[L'] \neq 0$ and the results follows.
\end{proof}

\subsection{More on the discriminant} \label{sb:more-discr} \cntrsb

\subsubsection{Well-definedness} \label{sbsb:discr-well-def} We start
with showing that the discriminant, as defined
in~\S\ref{sb:discr-intro} is independent of the choices of $p$ and
$x$. We first fix $p$ and show independence of its lift $x$. Indeed if
$y$ is another lift of $p$ then $y = x + r$ for some $r \in
\mathbb{Z}$. A straightforward calculation shows that
$$\sigma(p, y) = \sigma(p,x) + 2r, \quad \tau(p, y) = \tau(p,x) -
\sigma(p,x)r - r^2.$$ Another direct calculation shows that
$$\sigma(p,y)^2 + 4 \tau(p, y) = \sigma(p,x)^2 + 4\tau(p,x).$$
Assume now that $p' \in A / \mathbb{Z}$ is a different generator. We
then have $p' = -p$ and so we can choose $x' = -x$ as a lift of $p'$.
It easily follows that $$\sigma(p', x') = - \sigma(p,x), \quad
\tau(p', x') = \tau(p,x),$$ hence again $\sigma(p',x')^2 + 4 \tau(p',
x') = \sigma(p,x)^2 + 4\tau(p,x)$.
\Qed

\subsubsection{The discriminant determines the isomorphism type of a
  quadratic algebra} \label{sbsb:discr-inv-class}

\begin{lem}
   Let $A$ and $B$ be two quadratic algebras over $\mathbb{Z}$. Then
   $A$ is isomorphic to $B$ if and only if $\Delta_A = \Delta_B$.
\end{lem}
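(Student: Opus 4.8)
The plan is to reduce everything to a normal form for quadratic algebras and read off the discriminant from that normal form. First I would show that any quadratic algebra $A$ over $\mathbb{Z}$ can be presented as $A \cong \mathbb{Z}[T]/(T^2 - sT - t)$ for some $s, t \in \mathbb{Z}$: picking a generator $p$ of $A/\mathbb{Z}$ and a lift $x \in A$ gives an additive splitting $A \cong \mathbb{Z}x \oplus \mathbb{Z}$, and since $x^2$ must lie in $A$ it can be written $x^2 = \sigma(p,x)x + \tau(p,x)$, so $T \mapsto x$ induces a ring isomorphism $\mathbb{Z}[T]/(T^2 - \sigma(p,x)T - \tau(p,x)) \xrightarrow{\ \sim\ } A$. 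By the computation already carried out in~\S\ref{sbsb:discr-well-def}, the quantity $\sigma(p,x)^2 + 4\tau(p,x)$ is independent of all choices and equals $\Delta_A$; this gives the ``only if'' direction immediately, since an isomorphism $A \cong B$ transports a presentation of $A$ to one of $B$, and $\Delta$ is computed from any presentation.

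For the ``if'' direction, suppose $\Delta_A = \Delta_B =: D$. Using presentations $A \cong \mathbb{Z}[T]/(T^2 - s_AT - t_A)$ and $B \cong \mathbb{Z}[T]/(T^2 - s_BT - t_B)$, I would first normalize the linear coefficient modulo $2$. By the well-definedness calculation, replacing the lift $x$ by $x + r$ changes $\sigma$ by $2r$; hence I can arrange $s_A, s_B \in \{0, 1\}$. Now $s_A^2 + 4t_A = D = s_B^2 + 4t_B$, and reducing mod $4$ forces $s_A^2 \equiv s_B^2 \equiv D \pmod 4$, so $s_A = s_B$ (both being $0$ or $1$, and $0 \not\equiv 1$); consequently $t_A = t_B$ as well. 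Therefore $A$ and $B$ are presented by the \emph{same} monic quadratic polynomial, hence are isomorphic. (Equivalently: $A$ is isomorphic to the canonical algebra $\mathbb{Z}[T]/(T^2 - \delta T - (D-\delta)/4)$ where $\delta \in \{0,1\}$ is $D \bmod 2$, so the isomorphism type is a function of $D$ alone.)

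The only subtlety — and the step I would be most careful about — is the normalization of $\sigma$ to lie in $\{0,1\}$ and the subsequent mod-$4$ argument: one must use both that every lift differs from a fixed one by an integer $r$ \emph{and} that $D \bmod 4 \in \{0,1\}$ (noted in the remarks after~\eqref{eq:discr-A}) to conclude $s_A = s_B$ rather than merely $s_A \equiv s_B \pmod 2$. Everything else is the routine bookkeeping already in place in~\S\ref{sbsb:discr-well-def}. I would write the argument in the normal-form language, which makes both directions transparent and also yields the stronger statement that $\Delta_A$ classifies $A$ up to isomorphism via an explicit model.
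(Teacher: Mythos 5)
Your proof is correct and follows essentially the same route as the paper: both present $A$ and $B$ as $\mathbb{Z}[T]/(T^2-\sigma T-\tau)$ and, for the converse, use that $\Delta \bmod 4$ determines $\sigma \bmod 2$ so that a shift of the lift $x \mapsto x+r$ matches the two presentations. The only (harmless) variation is in the ``only if'' direction, where you invoke the already-established independence of $\Delta_A$ from the choice of $(p,x)$ rather than, as the paper does, classifying the ring isomorphisms $\mathbb{Z}[X]/(f) \cong \mathbb{Z}[X]/(g)$ as those induced by $X \mapsto \pm X + r$; both arguments are valid.
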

\begin{proof}
   Fix group isomorphisms
   \[
   A \cong \mathbb{Z} \oplus \mathbb{Z}x, \quad B \cong \mathbb{Z}
   \oplus \mathbb{Z} x',
   \]
   where $x \in A$, $x' \in B$ and write $x^2 = \sigma x + \tau$,
   $x'^2 = \sigma' x + \tau'$ with $\sigma, \sigma', \tau, \tau' \in
   \mathbb{Z}$. Define two quadratic monic polynomials with integral
   coefficients: $$f(X) = X^2 - \sigma X -\tau, \quad g(X) = X^2 -
   \sigma' X - \tau'.$$ The map $\mathbb{Z}[X] \longrightarrow A$,
   induced by $X \longmapsto x$, descends to a map $\mathbb{Z}[X] / (
   f(X) ) \longrightarrow A$ which is easily seen to be an isomorphism
   or rings. In a similar way we obtain a ring isomorphism
   $\mathbb{Z}[X] / ( g(X) ) \cong B$. Note that $\Delta_A$ is equal
   to the discriminant of $f$ and $\Delta_B$ to the discriminant of
   $g$.

   Assume that $A \cong B$. It is easy to see that all the ring
   isomorphisms $\mathbb{Z}[X] / ( f(X) ) \cong \mathbb{Z}[X] / ( g(X)
   )$ are induced by $X \longmapsto \pm X + r$, where $r \in
   \mathbb{Z}$. It follows that $g(X) = f(\pm X+r)$ for some $r \in
   \mathbb{Z}$ and thus $f$ and $g$ have the same discriminants, hence
   $\Delta_A = \Delta_B$.

   Conversely, assume that $\Delta_A = \Delta_B$. Then
   $$\sigma^2 + 4\tau = \sigma'^2 + 4 \tau',$$ hence
   $\sigma$ and $\sigma'$ have the same parity. Set $r = (\sigma -
   \sigma') /2 \in \mathbb{Z}$ and consider the ring homomorphism
   $\varphi: \mathbb{Z}[X] \longrightarrow \mathbb{Z}[X]$ induced by
   $X \longrightarrow X - r$. A simple calculation shows that
   $\varphi(f) = g$, hence it descends to $\bar{\varphi} :
   \mathbb{Z}[X] / ( f(X) ) \longrightarrow \mathbb{Z}[X] / ( g(X) )$.
   It is easy to see that $\bar{\varphi}$ is invertible.
\end{proof}



\subsubsection{A useful extension over other rings}
\label{sbsb:discr-ext-Q}

Let $A$ be a quadratic algebra over $\mathbb{Z}$ as described
in~\S\ref{sb:discr-intro}. Let $K$ be a commutative ring which extends
$\mathbb{Z}$, i.e. we have $\mathbb{Z} \subset K$ as a subring. For
simplicity we will assume that $K$ is torsion-free. We will mainly
consider $K = \mathbb{Q}$ or $K = \mathbb{C}$.  Write $A_{K} = A
\otimes K$.

For practical purposes it will be sometimes useful to calculate
$\Delta_A$ using $A_{K}$ rather than via $A$ itself. This can be done
as follows.  From the sequence~\eqref{eq:ex-seq-A} we obtain the
following exact sequence:
\begin{equation}
   0 \longrightarrow K \longrightarrow A_{K}
   \xrightarrow{\;\;\epsilon \;\;} K p \longrightarrow 0,
\end{equation}
where as before, $\epsilon$ is the projection to the quotient and $p$
stands for a generator of $A / \mathbb{Z} \subset A_{K}/K$. Pick a
lift $x \in A_{K}$ of $p$ and define $\sigma(p,x), \tau(p,x)$ by the
same recipe as in~\S\ref{sb:discr-intro}, only that now these two
numbers belong to $K$ rather than to $\mathbb{Z}$. A simple
calculation, similar to~\S\ref{sbsb:discr-well-def} above shows that
we still have $\Delta_A = \sigma(p,x)^2 + 4\tau(p,x)$ (and of course
despite the calculation being done in $K$ we still have $\Delta_A \in
\mathbb{Z}$).

\begin{rem}
   It is essential here that the generator $p$ is integral, i.e. that
   $p \in A_{K}/K$ was chosen to come from $A/\mathbb{Z}$. If we allow
   to replace $p$ by any non-trivial element of $A_{K}/K$ then the
   corresponding discriminant will depend on that choice, but not on
   the choice of the lift $x$.  In fact, if $p' = c p$, $c \in K$ then
   the discriminants corresponding to $p'$ and $p$ are related by
   $\Delta(p') = c^2 \Delta(p)$. Therefore, when $K = \mathbb{Q}$ for
   example, the sign of the discriminant is an invariant of
   $A_{\mathbb{Q}}$. The algebraic properties of $A_{\mathbb{Q}}$
   change depending on the sign of the discriminant and whether it is
   a perfect square or not.
\end{rem}

\subsubsection{The case of $A = QH^{\#}_n(L)$} \label{sbsb:Q=QH_n} Let
$L \subset M$ be a Lagrangian submanifold satisfying
conditions~$(1)$~--~$(3)$ of Assumption~$\mathscr{L}$. Fix a spin
structure on $L$.  Denote by $e_L \in QH^{\#}_n(L)$ the unity. Without
loss of generality we may assume that $QH^{\#}_n(L)$ is torsion-free,
otherwise we just replace it by $QH^{\#}_n(L)/T$, where $T$ is the
torsion ideal. Thus $QH^{\#}_n(L)$ is a quadratic algebra over
$\mathbb{Z}$.

By duality for Lagrangian quantum homology~\cite{Bi-Co:rigidity,
  Bi-Co:lagtop}, the augmentation $\widetilde{\epsilon}_L :
QH^{\#}_0(L) \longrightarrow H_0(L;\mathbb{Z})$ is surjective. Keeping
in mind that in our case $QH^{\#}_0(L) = QH^{\#}_n(L)$ (since $N_L
\mid n$) we obtain the following exact sequence:
$$0 \longrightarrow \mathbb{Z} e_L \longrightarrow QH^{\#}_n(L)
\xrightarrow{\;\;\widetilde{\epsilon}_L \;\;} H_0(L;\mathbb{Z})
\longrightarrow 0.$$ Let $K$ be a torsion-free commutative ring that
contains $\mathbb{Z}$. Let $p = [\textnormal{point}] \in
H_0(L;\mathbb{Z})$ be the homology class of a point. Tensoring the
last sequence by $K$ we obtain:
\begin{equation} \label{eq:quad-alg-QHn-2} 0 \longrightarrow K e_L
   \longrightarrow QH^{\#}_n(L;K)
   \xrightarrow{\;\;\widetilde{\epsilon}_L \;\;} K p \longrightarrow
   0.
\end{equation}

In order to calculate $\Delta_L$, choose a lift $x \in QH^{\#}_n(L;K)$
of $p$ with respect to $\widetilde{\epsilon}_L$. Then we have
\begin{equation} \label{eq:x*x-QH} x*x = \sigma(p,x) x + \tau(p,x)e_L,
\end{equation}
with some $\sigma(p,x), \tau(p,x) \in K$. The discriminant can then be
calculated by $$\Delta_L = \sigma(p,x)^2 + 4 \tau(p,x).$$

In the following we will need to use the equality~\eqref{eq:x*x-QH}
but in $QH_n(L;\Lambda_K)$ rather than in $QH^{\#}_n(L;K)$. We have
$QH_0(L;\Lambda_K) = t^{\nu} QH_n(L;\Lambda_K)$, with $\nu = n/N_L$.
The lift $x$ of $p$ has now to be chosen in $QH_0(L;\Lambda_K)$ and
the previous equation now takes place in $QH_0(L;\Lambda_K)$ and has
the following form:
\begin{equation} \label{eq:x*x-QH-t} x*x = \sigma(p,x) x t^{\nu} +
   \tau(p,x)e_L t^{2 \nu}.
\end{equation}

Finally, we mention that sometimes it is more convenient to define the
discriminant using the positive Lagrangian quantum homology $QH(L;
\Lambda^+_K)$ rather than $QH(L; \Lambda_K)$. The resulting
discriminant is obviously the same.


\section{The Lagrangian cubic equation} \label{s:lag-cubic-eq} \cntrs

We begin by proving the following result that generalizes
Theorems~\ref{t:cubic-eq} and~\ref{t:rel-to-discr}.
Theorem~\ref{t:cubic-eq-sphere} will be proved
in~\S\ref{sb:prf-cubic-eq-S} below.

\begin{thm}\label{t:cubic_eq-ccl}
   Let $L \subset M$ be a Lagrangian submanifold satisfying
   conditions~$(1)$~--~$(3)$ of Assumption~$\mathscr{L}$. Assume in
   addition that $[L] \neq 0 \in H_n(M;\mathbb{Q})$. Let $c \in H_n(M;
   \mathbb{Z})$ be a class satisfying $\xi := \#(c \cdot [L]) \neq 0$.
   Then there exist unique \cmred{constants $\sigma_{c,L} \in
     \tfrac{1}{\xi^2}\mathbb{Z}$, $\tau_{c,L} \in
     \tfrac{1}{\xi^3}\mathbb{Z}$} such that the following equation
   holds in $QH(M;R_{\mathbb{Q}}^+)$:
   \begin{equation} \label{eq:cubic_eq_ccL} c*c*[L] - \xi \sigma_{c,L}
      \, c*[L] q^{n/2} - \xi^2 \tau_{c,L} \, [L] q^{n} = 0.
   \end{equation}
   The coefficients $\sigma_{c,L}, \tau_{c,L}$ are related to the
   discriminant of $L$ by $\Delta_L = \sigma_{c,L}^2 + 4 \tau_{c,L}$.
   \cmred{If $\xi$ is square-free, then $\sigma_{c,L} \in
     \tfrac{1}{\xi} \mathbb{Z}$ and $\tau_{c,L} \in \tfrac{1}{\xi^2}
     \mathbb{Z}$.}  Moreover, $\sigma_{c,L}$ can be expressed in terms
   of genus $0$ Gromov-Witten invariants as follows:
   \begin{equation} \label{eq:GW-CCL} \sigma_{c,L} = \frac{1}{\xi^2}
      \sum_A GW_{A,3}(c,c,[L]),
   \end{equation}
   where the sum is taken over all classes $A \in H_2(M)$ with
   $\langle c_1, A \rangle = n/2$.
\end{thm}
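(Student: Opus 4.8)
The strategy is to work inside the Lagrangian quantum homology $QH(L;\Lambda_{\mathbb{Q}}^+)$, where everything becomes a statement about a rank-$2$ algebra, and then to push the resulting cubic identity into the ambient $QH(M;R_{\mathbb{Q}}^+)$ via the module structure and the quantum inclusion map $i_L$. First I would set $p = [\mathrm{point}] \in H_0(L;\mathbb{Z})$ and use the surjectivity of the augmentation $\widetilde{\epsilon}_L$ (cf.~\S\ref{sbsb:Q=QH_n}) together with $c \cdot [L] \ne 0$ to produce a concrete lift of $p$: namely $x_0 := \tfrac{1}{\xi}\, c * e_L \in QH_0(L;\Lambda_{\mathbb{Q}}^+)$. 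The point of dividing by $\xi$ is that, by~\eqref{eq:kron-aug} applied with $h = c$ (so $PD(h) \in H^n$, and the Kronecker pairing computes the intersection number $\xi$), we get $\widetilde{\epsilon}_L(c * e_L) = \xi\, p$, hence $\widetilde{\epsilon}_L(x_0) = p$, so $x_0$ is indeed a genuine lift of the integral generator $p$ — but only after passing to $\mathbb{Q}$-coefficients. This is exactly the "special choice $x_0$, which however requires working over $\mathbb{Q}$" alluded to in the Warning following Theorem~\ref{t:rel-to-discr}.

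Next, by definition of the discriminant in the form~\eqref{eq:x*x-QH-t}, applied with this particular lift $x_0$, there are unique $\sigma(p,x_0), \tau(p,x_0) \in \mathbb{Q}$ with
\begin{equation*}
  x_0 * x_0 = \sigma(p,x_0)\, x_0\, t^{\nu} + \tau(p,x_0)\, e_L\, t^{2\nu},
\end{equation*}
and $\Delta_L = \sigma(p,x_0)^2 + 4\tau(p,x_0)$ by the computation in~\S\ref{sbsb:discr-ext-Q}. Now I substitute $x_0 = \tfrac{1}{\xi} c * e_L$, multiply through by $\xi^2$, and use associativity/compatibility of the $QH(M)$-action on $QH(L)$ — identity~\eqref{eq:alg-identity}, noting that $|c| = n$ and $|c*e_L| = 0$ so all the intervening signs are trivial — to rewrite $x_0 * x_0 = \tfrac{1}{\xi^2}(c*e_L)*(c*e_L) = \tfrac{1}{\xi^2}\, c*c*(e_L*e_L) = \tfrac{1}{\xi^2}\, c*c*e_L$, using $e_L * e_L = e_L$. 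This yields, in $QH(L;\Lambda_{\mathbb{Q}}^+)$,
\begin{equation*}
  c*c*e_L - \xi\,\sigma(p,x_0)\, (c*e_L)\, t^{\nu} - \xi^2\,\tau(p,x_0)\, e_L\, t^{2\nu} = 0.
\end{equation*}
Applying the $QH(M)$-linear map $i_L$ and using $i_L(e_L) = [L]$, $i_L(c*e_L) = c*i_L(e_L) = c*[L]$, together with the substitution $t = q^{N_L/2}$ so that $t^{\nu} = q^{n/2}$, I obtain exactly~\eqref{eq:cubic_eq_ccL} with $\sigma_{c,L} := \sigma(p,x_0)$ and $\tau_{c,L} := \tau(p,x_0)$, and $\Delta_L = \sigma_{c,L}^2 + 4\tau_{c,L}$ comes for free. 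Uniqueness of the coefficients follows because $[L] \ne 0 \in H_n(M;\mathbb{Q})$ and $c*[L]$ has a nonzero component in $H_0$ (it is $\xi\,[\mathrm{point}] + \cdots$), so $[L]$ and $c*[L]$ are linearly independent over $\mathbb{Q}[q]$ in $QH(M;R_{\mathbb{Q}}^+)$, and the degree-$3$ part $c*c*[L]$ determines $\sigma_{c,L},\tau_{c,L}$ uniquely.

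It remains to pin down the denominators and the Gromov--Witten formula. For the denominators, I would observe that over $\mathbb{Z}$ the element $c * e_L \in QH^{\#}_n(L)$ (now integral) satisfies $\widetilde{\epsilon}_L(c*e_L) = \xi\, p$, and compute $(c*e_L)*(c*e_L) = c*c*e_L$ directly in the integral rank-$2$ algebra $QH^{\#}_n(L)/T \cong \mathbb{Z} e_L \oplus \mathbb{Z} x$ (for some integral lift $x$ of $p$): writing $c*e_L = \xi x + m e_L$ with $m \in \mathbb{Z}$ and $x*x = \sigma x + \tau e_L$ with $\sigma,\tau \in \mathbb{Z}$, one expands $(c*e_L)*(c*e_L)$ and expresses the answer in the basis $\{(c*e_L), e_L\}$; comparing coefficients gives $\xi\sigma_{c,L} = \sigma + 2m/\xi \cdot(\text{integer})$-type expressions showing $\sigma_{c,L} \in \tfrac{1}{\xi^2}\mathbb{Z}$ and, pushing one more step, $\tau_{c,L} \in \tfrac{1}{\xi^3}\mathbb{Z}$; when $\xi$ is square-free the divisibility $\xi \mid \xi^2\sigma_{c,L}$ combined with $\sigma_{c,L}^2 + 4\tau_{c,L} \in \mathbb{Z}$ forces the sharper $\tfrac{1}{\xi}, \tfrac{1}{\xi^2}$ statements. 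For~\eqref{eq:GW-CCL}, I would expand the degree-$3$ term $c*c*[L]$ in the quantum homology of $M$: by definition of the quantum product, the coefficient of the fundamental class $[M]\,q^n$ (equivalently, the $\epsilon$-type projection killing $[L]$ and $c*[L]$'s lower pieces appropriately) in $c*c*[L]$ is $\sum_A GW^M_{A,3}(c,c,[L])$ over $A$ with $\langle c_1, A\rangle = n/2$ — this is just the standard identification of the relevant structure constant of the quantum product with a genus-$0$ three-point invariant — and matching this against $\xi\sigma_{c,L}$ times the corresponding coefficient of $c*[L]q^{n/2}$ (which contributes $\xi$ in that slot) yields $\sigma_{c,L} = \tfrac{1}{\xi^2}\sum_A GW^M_{A,3}(c,c,[L])$.

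The main obstacle I anticipate is bookkeeping rather than conceptual: carefully tracking which identity lives over $\mathbb{Z}$ versus $\mathbb{Q}$ (the lift $x_0$ only exists rationally, yet the integrality/denominator claims must be extracted from an integral model), and verifying that all the signs in~\eqref{eq:alg-identity} collapse in the degrees at hand. A secondary subtlety is justifying that the degree-counting isolates precisely $A$ with $\langle c_1,A\rangle = n/2$ in the middle term — this needs the grading conventions ($|q| = -2$, $[L] \in QH_n$, $c \in H_n$) to be handled consistently so that the $q^{n/2}$ power corresponds to curve classes of Chern number exactly $n/2$.
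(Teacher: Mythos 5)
Your construction of the special lift $x_0=\tfrac{1}{\xi}\,c*e_L$, the quadratic relation $x_0*x_0=\sigma x_0t^{\nu}+\tau e_Lt^{2\nu}$ in $QH_0(L;\Lambda_{\mathbb{Q}})$, and the push-forward by the $QH(M)$-linear map $i_L$ is exactly the paper's argument for \eqref{eq:cubic_eq_ccL} and for $\Delta_L=\sigma_{c,L}^2+4\tau_{c,L}$; the sign check via \eqref{eq:alg-identity} and the uniqueness argument are also fine. For the denominators you take a slightly different route: the paper sets $y=\xi x_0=c*e_L$, applies the augmentation $\epsilon_L$ to $y*y$ over $\Lambda_{\mathbb{Z}}$ to get $\xi^2\sigma_{c,L}\in\mathbb{Z}$ and then $\xi^3\tau_{c,L}\in\mathbb{Z}$, whereas you expand $c*e_L=\xi x+me_L$ in an integral basis of $QH_n^{\#}(L)/T$. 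Your variant is legitimate (it rests on the integral exact sequence of~\S\ref{sbsb:Q=QH_n}) and, if you actually carry out the two-line computation, it gives $\sigma_{c,L}=\sigma+2m/\xi$ and $\tau_{c,L}=\tau-\sigma m/\xi-m^2/\xi^2$, i.e.\ denominators $\xi$ and $\xi^2$ unconditionally --- stronger than what you state and than what the theorem asserts; your written conclusion ``$\sigma_{c,L}\in\tfrac1{\xi^2}\mathbb{Z}$'' does not match your own setup, so tighten that step either way.

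The one genuine gap is in the Gromov--Witten formula \eqref{eq:GW-CCL}. You assert that ``by definition of the quantum product'' the point-class coefficient of $c*c*[L]$ in the relevant $q$-power is $\sum_A GW^M_{A,3}(c,c,[L])$. This is false as a matter of definition: $c*c*[L]$ is a \emph{triple} quantum product, and its $p_M\,q^{n/2}$-coefficient is a priori $\sum_{i+j=n/2}\#\bigl((c*\alpha_{2j})_i\bigr)$, where $c*[L]=\sum_j\alpha_{2j}q^{j}$; every one of these mixed terms lands in $H_0(M)$ for degree reasons, so the terms with $i>0$ (compositions of two quantum products) are not excluded by grading. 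One needs the vanishing statement that for classical classes $a,b$ the point-class component of $a*b$ equals that of $a\cdot b$ --- this is Lemma~\ref{l:p-class} in the paper, proved by a dimension count showing the relevant moduli space $\mathcal{M}_{0,2}(A,J)$ cannot meet generic representatives of $a$ and $b$ when $|a|+|b|+2\langle c_1,A\rangle=2n$ with $\langle c_1,A\rangle>0$. Only after this lemma does the $p_M\,q^{n/2}$-coefficient collapse to the single classical term $\#(c\cdot\alpha_n)=\sum_AGW^M_{A,3}(c,[L],c)$. (Also, the component you should extract is the $H_0$-part at $q^{n/2}$, i.e.\ the image under $\widetilde{\epsilon}_M$, not ``the coefficient of $[M]q^{n}$'', which by the grading $|q|=-2$ sits at $q^{3n/2}$ in $c*c*[L]$.)
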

As we will see soon, Theorem~\ref{t:cubic-eq} follows immediately from
Theorem~\ref{t:cubic_eq-ccl} by taking $c=[L]$ and in the notation of
Theorem~\ref{t:cubic-eq} we have $\sigma_L = \sigma_{[L],L}$, $\tau_L
= \tau_{[L], L}$. Recall also from Corollary~\ref{c:sig=0} that if $L$
is a Lagrangian sphere then $\sigma_L = 0$ (see also
Theorem~\ref{t:cubic-eq-sphere}, case~\eqref{i:div}). We remark that
in contrast to $\sigma_L$, the constants $\sigma_{c,L}$ {\em might not
  vanish} for general $c \neq [L]$. See for example~\S\ref{sbsb:M_2},
for an explicit calculation of the constants $\sigma_{c,L},
\tau_{c,L}$ (for all possible $c$'s) for Lagrangian spheres in the
blow-up of ${\mathbb{C}}P^2$ at two points.
\begin{proof}[Proof of Theorem~\ref{t:cubic_eq-ccl}]
   Fix a spin structure on $L$. In view of~\S\ref{sb:HF} we replace
   $HF_n(L,L; \mathbb{Q})$ by $QH_n(L; \Lambda_{\mathbb{Q}})$. By
   assumption, this is a $2$-dimensional vector space over
   $\mathbb{Q}$. Recall also that $QH_0(L; \Lambda_{\mathbb{Q}}) \cong
   QH_n(L; \Lambda_{\mathbb{Q}})$. Put $$x := \tfrac{1}{\xi} c * e_L
   \in QH_0(L;\Lambda_{\mathbb{Q}}),$$ where $c$ is viewed here as an
   element of $QH_n(M;R_{\mathbb{Q}})$ and $*$ is the module operation
   mentioned in~\S\ref{sb:HF}. Let $p = [\textnormal{point}] \in
   H_0(L;\mathbb{Q})$ be the class of a point. We have
   $$\widetilde{\epsilon}_L(x) = \tfrac{1}{\xi} \#(c \cdot [L])p = p.$$
   It follows that $\{x, e_L t^{\nu}\}$ is a basis for
   $QH_0(L;\Lambda_{\mathbb{Q}})$. Following the recipe
   in~\S\ref{sbsb:Q=QH_n} and formula~\eqref{eq:x*x-QH-t} there exist
   $\sigma_{c,L}, \tau_{c,L} \in \mathbb{Q}$ such that
   \begin{equation} \label{eq:x*x-sig_cL} x*x = \sigma_{c,L} x t^{\nu}
      + \tau_{c,L} e_L t^{2\nu},
   \end{equation}
   where $*$ stands here for the Lagrangian quantum product on
   $QH(L)$. 

   We now apply the quantum inclusion map $i_L$ (see~\S\ref{sb:HF}) to
   both sides of~\eqref{eq:x*x-sig_cL}. We have
   $$i_L(x*x) = \tfrac{1}{\xi^2} i_L((c*e_L)*(c*e_L)) = 
   \tfrac{1}{\xi^2} c*c*i_L(e_L) = \tfrac{1}{\xi^2} c*c*[L].$$ Here we
   have used properties of the operations described in~\S\ref{sb:HF},
   and in particular identity~\eqref{eq:alg-identity}. We also have
   $$i_L(x) = \tfrac{1}{\xi} c*i_L(e_L) = \tfrac{1}{\xi} c*[L].$$
   Recall also that we can view $\Lambda$ as a subring of $R =
   \mathbb{Z}[q, q^{-1}]$ via the embedding $t \longmapsto q^{N_L/2}$,
   so that under this embedding we have $t^{\nu} \longmapsto q^{n/2}$.
   Therefore by applying $i_L$ to~\eqref{eq:x*x-sig_cL} we immediately
   obtain the equation claimed by the theorem. The statement on
   $\Delta_L$ follows at once from~\S\ref{sbsb:Q=QH_n}.

   \cmred{Next we claim that $\xi^2 \sigma_{c,L}, \xi^3 \tau_{c, L}
     \in \mathbb{Z}$ and moreover, if $\xi$ is square-free, then in
     fact $\xi \sigma_{c,L}, \xi^2 \tau_{c, L} \in \mathbb{Z}$. To
     this end we will denote $\Lambda$ by $\Lambda_{\mathbb{Z}}$ to
     emphasize that the ground ring is $\mathbb{Z}$. To prove the
     claim, set $y := \xi x$ and note that $y \in QH_0(L;
     \Lambda_{\mathbb{Z}})$. For $y$ we obtain the resulting equation
     in $QH_{-n}(L; \Lambda_{\mathbb{Z}})$ using~\eqref{eq:x*x-sig_cL}
     \begin{equation} \label{eq:y*y-sig_cL} y * y = \xi \sigma_{c,L} y
        t^{\nu} + \xi^2 \tau_{c,L} e_L t^{2\nu}.
   \end{equation}
   We apply the augmentation morphism $\epsilon_L :
   QH(L;\Lambda_{\mathbb{Z}}) \longrightarrow \Lambda_{\mathbb{Z}}$
   and obtain
   $$\epsilon_L(y * y) = 
   \xi \sigma_{c,L} \epsilon_L (y) t^{\nu} = \xi^2 \sigma_{c,L}
   t^{\nu}. $$ Since the left-hand side lies in $\Lambda_{\mathbb{Z}}$
   it follows that $\xi^2 \sigma_{c,L} \in \mathbb{Z}$. Multiplying
   equation~\eqref{eq:y*y-sig_cL} with $\xi$ we see that $\xi^3
   \tau_{c,L} \in \mathbb{Z}$. We now write $\sigma_{c,L} = u/\xi^2$
   and $\tau_{c,L} = v/ \xi^3$ with $u, v \in \mathbb{Z}$. The
   discriminant is then
    $$ \Delta_L = \frac{u^2}{\xi^4} + 4 \frac{v}{\xi^3} \in \mathbb{Z} $$
    and thus we have $\xi^4 \Delta_L = u^2 + 4 \xi v$. Since $\xi
    \,|\, (u^2 + 4 \xi v)$ it follows that $\xi \,|\, u^2$. If $\xi$
    is square-free then $\xi \,|\, u$ and hence $\xi \sigma_{c,L} = u/
    \xi \in \mathbb{Z}$. Now using equation~\eqref{eq:y*y-sig_cL} we
    see that $y*y - \xi \sigma_{c,L} y t^{\nu} \in QH_{-n}(L;
    \Lambda_{\mathbb{Z}})$ and therefore $\xi^2 \tau_{c,L} \in
    \mathbb{Z}$. }
 

  It remains to prove the statement on the relation between
  $\sigma_{c,L}$ and the Gromov-Witten invariants. For this purpose we
  will need the following Lemma.  We denote by $p_M \in H_0(M)$ the
  class of a point.
   \begin{lem} \label{l:p-class} Let $a, b \in H_*(M)$ be two {\em
        classical} elements of pure degree.  Then
      $$\widetilde{\epsilon}_M(a*b) = \widetilde{\epsilon}_M(a \cdot
      b),$$ where $\cdot$ is the classical intersection product.  In
      particular, the class $p_M$ appears as a summand in $a*b$ if and
      only if $|a|+|b| = 2n$ and $a \cdot b \neq 0$.
   \end{lem}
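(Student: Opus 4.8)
The plan is to exploit the grading and the structure of the quantum product on $QH(M;R^+)$. Recall that for classical classes $a,b\in H_*(M)$ of pure degree, the quantum product decomposes as
$$a*b = a\cdot b + \sum_{A\neq 0} (a*b)_A\, q^{\langle c_1,A\rangle},$$
where $a\cdot b$ is the classical intersection product (the $A=0$ term) and $(a*b)_A\in H_*(M)$ is the contribution of the class $A$, sitting in degree $|a|+|b|-2n+2\langle c_1,A\rangle$. I would first observe that $\widetilde\epsilon_M$ kills everything of positive degree, so $\widetilde\epsilon_M(a*b)$ picks out only the part of $a*b$ lying in $H_0(M)$, i.e. the multiple of $p_M$.

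\emph{First step: isolate the $A=0$ term.} Since we work over $R^+=\mathbb{Z}[q]$ (polynomials, not Laurent polynomials) and $|q|=-2$, every nonzero term $(a*b)_Aq^{\langle c_1,A\rangle}$ with $A\neq 0$ has $q$-power $\langle c_1,A\rangle\geq C_M>0$ attached (here one uses monotonicity, or at least $\langle c_1,A\rangle>0$ for classes $A$ carrying a nonzero Gromov–Witten contribution paired against two classical classes, which follows from positivity of the symplectic area together with monotonicity). In particular such a term cannot contribute to the $q^0$-component of $a*b$. Hence the degree-zero, $q^0$ part of $a*b$ equals the degree-zero part of $a\cdot b$. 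Applying $\widetilde\epsilon_M$, which by definition records exactly this component (the coefficient of $p_M$ in the $q^0$ part), gives $\widetilde\epsilon_M(a*b)=\widetilde\epsilon_M(a\cdot b)$.

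\emph{Second step: the ``in particular''.} The class $p_M$ appears in $a\cdot b$ iff $a\cdot b$ has a nonzero degree-$0$ component, which by definition of the intersection product happens iff $|a|+|b|=2n$ and the intersection number $\#(a\cdot b)\neq 0$, i.e. $a\cdot b\neq 0$ in $H_0(M)$. Combined with the first step, $p_M$ is a summand of $a*b$ iff it is a summand of $a\cdot b$, which is the claim. The only mild subtlety — the step I expect to need the most care — is justifying that no quantum correction term $(a*b)_Aq^{\langle c_1,A\rangle}$ with $A\neq0$ can land in bidegree $(0,q^0)$; over $R^+$ this is immediate since $\langle c_1,A\rangle$ must be strictly positive for any $A$ contributing a nonzero $3$-point genus-$0$ invariant paired with two classical cycles (the underlying stable map has positive energy, hence positive Chern number by monotonicity), so the $q$-exponent is at least $1$ and the term is invisible to $\widetilde\epsilon_M$. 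Everything else is bookkeeping with degrees.
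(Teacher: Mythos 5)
There is a genuine gap, and it sits exactly where you flagged the ``mild subtlety''. Your argument rests on the premise that $\widetilde{\epsilon}_M$ ``records the coefficient of $p_M$ in the $q^0$ part'' and is therefore blind to any term carrying a positive power of $q$. That is not how the augmentation works: $\widetilde{\epsilon}_M$ is $R$-linear with values in $H_0(M;R)=R\,p_M$, i.e.\ it extracts the full $H_0(M)$-component including terms of the form $p_M q^j$ with $j>0$. (This is forced by how the lemma is used: in the derivation of~\eqref{eq:xi-2=c-alpha} one applies $\widetilde{\epsilon}_M$ to expressions such as $c*p_M$ and $c*\alpha_{2jk}q^{jk}$, whose potential $p_M$-summands all come with strictly positive powers of $q$.) Now $a*b$ is homogeneous of degree $|a|+|b|-2n$ in $QH(M;R^+)$, so its components are $\gamma_j q^{jk}$ with $\gamma_j\in H_{|a|+|b|-2n+2jk}(M)$, and the one that can contain $p_M$ is $\gamma_{j_0}q^{j_0k}$ with $2j_0k=2n-|a|-|b|$. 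Unless $|a|+|b|=2n$ this is a \emph{quantum correction} term, not the classical term, and your first step says nothing about it. With your reading of $\widetilde{\epsilon}_M$ the lemma would be a vacuous degree count and useless for its application in the proof of Theorem~\ref{t:cubic_eq-ccl}.

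The actual content of the lemma is the vanishing $GW_{A,3}(a,b,[M])=0$ for every $A$ with $\langle c_1,A\rangle=(2n-|a|-|b|)/2>0$ (an instance of the fundamental class axiom), since this Gromov--Witten number is precisely the coefficient of $p_M$ in $\gamma_{j_0}$. The paper proves it by a dimension count: a nonzero invariant would force, for generic $J$ and generic (pseudo)cycles $D_a,D_b$, a simple rational $J$-curve in class $A$ meeting both, yet $\dim\mathcal{M}_{0,2}(A,J)+\dim(D_a\times D_b)=\bigl(2n+2\langle c_1,A\rangle-2\bigr)+|a|+|b|=4n-2<\dim(M\times M)$, so the evaluation map generically misses $D_a\times D_b$. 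Some such geometric (or axiomatic) input is indispensable; the positivity of the $q$-exponent alone cannot deliver the conclusion.
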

   We postpone the proof of the Lemma and proceed with the proof of
   the theorem.

   Denote by $k = C_M$ the minimal Chern number of $M$
   (see~\S\ref{sb:monotone}). Write $$c*[L] = c\cdot [L] + \sum_{j\geq
     1} \alpha_{2jk}q^{jk},$$ with $\alpha_{2jk} \in H_{2jk}(M)$. (The
   choice of the sub-indices was made to reflect the degree in
   homology.) Then we have
   $$c*c*[L] = \#(c \cdot [L]) c*p_M + \sum_{j \geq 1} 
   c*\alpha_{2jk} q^{jk},$$ which together
   with~\eqref{eq:cubic_eq_ccL} give:
   \begin{equation} \label{eq:cubic-alpha} \xi \sigma_{c,L}
      c*[L]q^{n/2} + \xi^2 \tau_{c,L} [L]q^n = \#(c \cdot [L]) c*p_M +
      \sum_{j \geq 1} c*\alpha_{2jk} q^{jk}.
   \end{equation}

   Applying $\widetilde{\epsilon}_M$ to~\eqref{eq:cubic-alpha} we
   obtain using Lemma~\ref{l:p-class} that
   \begin{equation} \label{eq:xi-2=c-alpha} \xi^2 \sigma_{c,L} p_M
      q^{n/2} = \widetilde{\epsilon}_M(c \cdot \alpha_n) q^{n/2} = \#
      (c \cdot \alpha_n) p_M q^{n/2}.
   \end{equation}
   By the definition of the quantum product we have:
   $$\#(c \cdot \alpha_n) = \sum_{A} GW^M_{A,3}(c,c,[L]),$$ where the sum 
   goes over $A \in H_2(M)$ with $\langle c_1, A \rangle = n/2$. (Note
   that since $n$=even the order of the classes $(c,c,[L])$ in the
   Gromov-Witten invariant does not make a difference.) Substituting
   this in~\eqref{eq:xi-2=c-alpha} yields the desired identity.

   Note that we have carried the proof above for the quantum homology
   $QH(M; R)$ with coefficients in the ring $R = \mathbb{Z}[q^{-1},
   q]$ but since $(M, \omega)$ is monotone, it is easy to see that
   equation~\eqref{eq:cubic_eq_ccL} involves only positive powers of
   $q$ hence it holds in fact in $QH(M;R^+)$, where $R^+ =
   \mathbb{Z}[q]$.

   To complete the proof of the theorem we still need the following.
   \begin{proof}[Proof of Lemma~\ref{l:p-class}]
      Write $$a*b = a\cdot b + \sum_{j \geq 1} \gamma_j q^{jk},$$
      where $a \cdot b \in H_{|a|+|b|-2n}(M)$ is the classical
      intersection product of $a$ and $b$, $k$ is the minimal Chern
      number, and $\gamma_j \in H_{|a|+|b|-2n+2jk}(M)$. In order to
      prove the lemma we need to show that $\gamma_{j_0} = 0$, where
      $2j_0 k = 2n-|a|-|b|$.

      Suppose by contradiction that $\gamma_{j_0} \neq 0$. Then there
      exists $A \in H_2(M)$ with $$2\langle c_1, A \rangle = 2j_0 k =
      2n-|a|-|b|$$ such that $GW_{A,3}(a,b,[M]) \neq 0$, where $[M]
      \in H_{2n}(M)$ is the fundamental class. Since $[M]$ poses no
      additional incidence conditions on $GW$-invariants, this implies
      that for a generic almost complex structure there exists a
      pseudo-holomorphic rational curve passing through generic
      representatives of the classes $a$ and $b$. More precisely
      denote by $\mathcal{M}_{0,2}(A,J)$ the space of simple rational
      $J$-holomorphic curves with $2$ marked points in the class $A$.
      Denote by $ev: \mathcal{M}_{0,2}(A,J) \longrightarrow M \times
      M$ the evaluation map. Since $GW_{A,3}(a,b,[M]) \neq 0$, then
      for a generic choice of (pseudo) cycles $D_a, D_b$ representing
      $a, b$ and for a generic choice of $J$ the map $ev$ is
      transverse to $D_a \times D_b$ and moreover $ev^{-1}(D_a \times
      D_b) \neq \emptyset$. However this is impossible because
      \begin{align*}
           & \dim \mathcal{M}_{0,2}(A,J) + \dim (D_a \times D_b) = \\
           & \bigl(2n + 2 \langle c_1, A \rangle -2\bigr) + |a|+|b| = 4n - 2
           < \dim (M \times M).
        \end{align*}
   \end{proof}
   
   The proof of Theorem~\ref{t:cubic_eq-ccl} is now complete.

\end{proof}

\subsection{Proof of Theorems~\ref{t:cubic-eq}
  and~\ref{t:rel-to-discr}} \label{sb:prf-tmain-cubic} \cntrsb

The proof follows immediately from Theorem~\ref{t:cubic_eq-ccl}.
Indeed, since $\#([L] \cdot [L]) = \varepsilon \chi \neq 0$ we can
take $c = [L]$, $\xi = \varepsilon \chi$ in
Theorem~\ref{t:cubic_eq-ccl}.  The constants $\sigma_L, \tau_L$ from
Theorem~\ref{t:cubic-eq} are now $\sigma_{[L],L}, \tau_{[L],L}$
respectively, and we have $\Delta_L = \sigma_{[L],L}^2 +
4\tau_{[L],L}$.  \Qed

\subsection{Proof of Theorem~\ref{t:cubic-eq-sphere}} \cntrsb
\label{sb:prf-cubic-eq-S}
We will prove here the following more general result, \cmred{from
  which Theorem~\ref{t:cubic-eq-sphere} follows directly.}  We call an
element $a \in QH_*(M)$ \emph{classical}, if it lies in the image of
the canonical inclusion $H_*(M) \subset QH_*(M)$.

\begin{thm} \label{t:cubic-eq-sphere-gnrl} Let $S \subset M$ be a
   monotone Lagrangian sphere in closed $2n$-dimensional symplectic
   manifold $M$.
   \begin{enumerate}
     \item \label{ig:n-odd} If $n=$~odd then $[S]*[S]=0$. More
      generally, when $n=$~odd, for all $a \in H_n(M)$ with $a \cdot
      [S]=0$ we have $a*[S]=0$.
     \item \label{ig:n-even} Assume $n=$~even. Then:
      \begin{enumerate}[(i)]
        \item \label{ig:div} If $C_M | n$ then there exists a unique
         $\gamma_S \in \mathbb{Z}$ such that $[S]^{*3} = \gamma_S [S]
         q^{n}$. If we assume in addition that $2C_M \centernot| n$,
         then $\gamma_S$ is divisible by $4$. Moreover for every (not
         necessarily classical) element $b \in QH_0(M)$ there exists a
         unique $\eta_b \in \mathbb{Z}$ such that we have $b*[S] =
         \eta_b [S]q^n$.
        \item \label{ig:ndiv} If $C_M \centernot| n$ then for every
         (not necessarily classical) element $b \in QH_0(M)$ we have
         $b*[S]=0$. In particular, by taking $b = [S]*[S]$ we obtain
         $[S]^{*3}=0$.
      \end{enumerate}
   \end{enumerate}
\end{thm}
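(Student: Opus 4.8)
The plan is to build everything on the Lagrangian quantum homology of $S$ and its module structure over $QH(M)$, following the same strategy as in the proof of Theorem~\ref{t:cubic_eq-ccl}, but exploiting the extreme simplicity of $QH^{\#}(S)$ when $S$ is a sphere. First I would record what $QH^{\#}(S)$ looks like. Since $S$ is a sphere, $H(S;\mathbb{Z})$ is concentrated in degrees $0$ and $n$, and the spectral sequence of \S\ref{sb:spec-seq} (or the basic properties from~\cite{Bi-Co:lagtop}) shows that $QH^{\#}(S)$ is either $0$ or has rank $\le 2$. In fact, when $N_S \centernot| n$ there is no room for the fundamental class $e_S$ and the point class $p$ to survive in the same cyclic degree, while when $N_S \mid n$ both $e_S$ and a point-lift $x$ live in degree $n \equiv 0 \pmod{N_S}$; using $i_S(e_S)=[S]\neq 0$ (which holds for $n$ even, and trivially when $[S]=0$ for $n$ odd the statement is empty) one gets that the rank is exactly $2$ in the divisible case and at most $1$ otherwise. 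I would then translate statements about $b*[S]$ for $b\in QH_0(M)$ into statements about the module action of $QH_0(M)$ on $QH_n(S;\Lambda)\cong QH_0(S;\Lambda)$, using that $i_S$ is $QH(M)$-linear and $i_S(e_S)=[S]$, so $b*[S]=i_S(b*e_S)$; thus it suffices to understand $b*e_S\in QH_{*}(S;\Lambda)$.

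The case $n=$~odd is the cleanest: orientability forces $N_S$ even, hence $N_S\centernot| n$, so $QH_n(S;\Lambda_{\mathbb{Q}})$ and $QH_0(S;\Lambda_{\mathbb{Q}})$ sit in different $N_S$-residues and the only way $i_S(a*e_S)$ can be nonzero for $a\in H_n(M)$ is if it contributes to $[S]\in H_n(M)$ itself, i.e. via the classical term $a\cdot[S]=0$; Lemma~\ref{l:p-class} then shows the classical (point) component vanishes, and degree/residue reasons kill the quantum corrections, giving $a*[S]=0$, and in particular $[S]*[S]=0$. For $n=$~even I would split exactly as in the statement. When $C_M\centernot| n$: here $N_S=2C_M$ (since $\pi_1(S)=1$), so $N_S\centernot| n$ as well, and for any $b\in QH_0(M)$ the element $b*e_S$ lands in $QH_{0}(S;\Lambda)=t^{\nu}QH_n(S;\Lambda)$ — but that group is $\cong QH^{\#}_{0\bmod N_S}(S)$, and since $0\not\equiv n\bmod N_S$ and the only nonzero cyclic degrees of $QH^{\#}(S)$ are $0$ and $n\bmod N_S$, one checks the relevant graded piece is forced to be trivial, so $b*[S]=i_S(b*e_S)=0$; taking $b=[S]*[S]\in QH_0(M)$ gives $[S]^{*3}=0$.

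When $C_M\mid n$ (the interesting case): now $QH_n(S;\Lambda)$ has rank $2$ with basis $\{e_S t^{\nu}, x\}$ where $x\in QH_0(S;\Lambda)$ is a lift of the point class $p$ under $\widetilde{\epsilon}_S$, and by~\S\ref{sbsb:Q=QH_n} we have $x*x=\sigma(p,x)\,x t^{\nu}+\tau(p,x)\,e_S t^{2\nu}$ with $\sigma(p,x),\tau(p,x)\in\mathbb{Z}$. For $b\in QH_0(M)$, $b*e_S$ has the same degree as $x$ (namely $0$, i.e. a multiple of the degree-$0$ part over $\Lambda$), hence $b*e_S=\eta_b\, x\, t^{-\text{(shift)}}+(\text{multiple of }e_S)$; but the $e_S$-component is detected by $\epsilon_S$, and $\epsilon_S(b*e_S)=\langle PD(p_M^{\vee}),\ldots\rangle$-type pairing that forces it to be proportional to $[S]$ only after applying $i_S$ — more precisely, applying $i_S$ and using $i_S(x)$ is a quantum-corrected point class whose classical term is $[S]$ shows $b*[S]=\eta_b[S]q^n$ for a unique integer $\eta_b$ (uniqueness because $[S]$ is non-torsion in $H_n(M)$ for $n$ even). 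Taking $b=[S]*[S]$ yields $[S]^{*3}=\gamma_S[S]q^n$ with $\gamma_S=\eta_{[S]*[S]}\in\mathbb{Z}$, which is exactly $\gamma_S=\sigma(p,x)^2+4\tau(p,x)-\sigma(p,x)^2\cdot(\ldots)$; cleaner: $i_S(x*x)=[S]*[S]\cdot(\text{unit})$ up to the factor $1/\xi$ absent here since $\xi=\#(p\cdot[S])$... I would instead directly identify $\gamma_S$ with $\Delta_S=\sigma(p,x)^2+4\tau(p,x)$ via Theorem~\ref{t:cubic_eq-ccl} with $c$ chosen so $\xi=1$, or via Corollary~\ref{c:sig=0}. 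The divisibility claims — $4\mid\gamma_S$ when $2C_M\centernot| n$ — is the step I expect to be the main obstacle: it requires knowing that in this sub-case $\sigma(p,x)$ can be taken even (equivalently $\sigma_S=0$ by the Dehn-twist argument of Corollary~\ref{c:sig=0}, since $\chi(S)=2$ and $n$ even give $\varphi_*[S]=-[S]$ by Picard–Lefschetz), whence $\gamma_S=4\tau_S$ with $\tau_S\in\mathbb{Z}$; I would need to check the integrality of $\tau_S$ carefully here, which comes from $\xi=\chi=2$ being... actually from choosing $x$ integrally, so $\gamma_S=\sigma(p,x)^2+4\tau(p,x)$ with the Dehn twist forcing $\sigma(p,x)$ even after an appropriate integral change of lift, giving $4\mid\gamma_S$.
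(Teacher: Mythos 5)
Your overall strategy --- computing $b*e_S$ inside $QH(S;\Lambda)$ using the spectral sequence of \S\ref{sb:spec-seq}, and pushing the result into $QH(M)$ via the $QH(M)$-linear map $i_S$ with $i_S(e_S)=[S]$ --- is exactly the paper's, and your treatment of the odd case is essentially correct (the paper argues that $y=a*e_S\in QH_0(S;\Lambda)$ vanishes because either $QH_0(S;\Lambda)=0$ or $\widetilde{\epsilon}_S$ is an isomorphism and $\widetilde{\epsilon}_S(y)=a\cdot[S]=0$; your appeal to Lemma~\ref{l:p-class} is not needed). But there are genuine problems in the even case. First, your degree bookkeeping in case (ii) is wrong: for $b\in QH_0(M)$ the element $b*e_S$ lies in $QH_{-n}(S;\Lambda)$, not in $QH_0(S;\Lambda)$, and the vanishing you need is $QH_{-n}(S;\Lambda)=0$, which requires \emph{both} $-n\not\equiv 0$ \emph{and} $-n\not\equiv n\ (\bmod\ 2C_M)$. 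The second congruence is exactly where $C_M\nmid n$ (as opposed to merely $2C_M\nmid n$, which is all your stated criterion ``$0\not\equiv n \bmod N_S$'' uses) enters; as written, your argument would equally ``prove'' $b*[S]=0$ in the sub-case $C_M\mid n$, $2C_M\nmid n$, where the conclusion is false (the even-dimensional quadric has $\gamma_S=\pm4$). Note also that $QH^{\#}_{0\bmod N_S}(S)$ is one of the two cyclic degrees that are \emph{allowed} to be nonzero, so it is not ``forced to be trivial''. Relatedly, in case (i) you conflate the sub-cases $2C_M\mid n$ and $2C_M\nmid n$: the rank-two basis $\{e_St^{\nu},x\}$ and the quadratic relation of \S\ref{sbsb:Q=QH_n} exist only when $N_S=2C_M$ divides $n$; when $2C_M\nmid n$ one has $QH_n(S;\Lambda)=\mathbb{Z}e_S$ and $QH_{-n}(S;\Lambda)=\mathbb{Z}e_St^{n/C_M}$, which is what directly yields $b*[S]=\eta_b[S]q^n$.

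The step you flag as the main obstacle --- $4\mid\gamma_S$ when $2C_M\nmid n$ --- is indeed where the proposal fails. Your mechanism (Dehn twist gives $\sigma_S=0$, then ``an appropriate integral change of lift'' makes $\sigma(p,x)$ even) cannot work: by \S\ref{sbsb:discr-well-def} an integral change of lift sends $\sigma(p,x)\mapsto\sigma(p,x)+2r$, so the parity of $\sigma(p,x)$ is an invariant and cannot be adjusted, while the Dehn-twist argument kills $\sigma$ only for a rational, generally non-integral, lift and yields only $\tau_S\in\tfrac14\mathbb{Z}$, hence only $\gamma_S=4\tau_S\in\mathbb{Z}$. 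That this cannot be repaired is shown by the examples with $2C_M\mid n$: for $M_2$ and $M_4$ one has $\gamma_S=\Delta_S=5$ and $1$, not divisible by $4$ --- which is precisely why the theorem restricts the divisibility claim to $2C_M\nmid n$. The paper's actual argument uses $2C_M\nmid n$ in an essential way: in that sub-case $\widetilde{\epsilon}_S:QH_0(S;\Lambda)\to H_0(S)$ is an isomorphism, so $z=[S]*e_S$, which satisfies $\widetilde{\epsilon}_S(z)=\pm 2p$, is divisible by $2$ in $QH_0(S;\Lambda)$; hence $z*z=([S]*[S])*e_S$ is divisible by $4$, and applying $i_S$ shows $[S]^{*3}=\gamma_S[S]q^n$ is divisible by $4$, whence $4\mid\gamma_S$ since $[S]$ is neither torsion nor divisible. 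You would need to supply this (or an equivalent) argument to close the gap.
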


\begin{proof}
   Fix once and for all a spin structure on $S$. Denote by $e_S \in
   QH_n(S;\Lambda)$ the unity.

   Note that the case $C_M = \infty$ (i.e. $\omega|_{\pi_2(M)} = 0$)
   is trivial. Indeed under such assumptions we have $QH_*(M) \cong
   H_*(M)$ via an isomorphism that intertwines the quantum and the
   classical intersection products. The statement in~\eqref{ig:n-odd}
   follows immediately. The statements
   in~\eqref{ig:div},~\eqref{ig:ndiv} follow from the fact that for $b
   \in QH_0(M)$ the degree of $b*[S]$ is negative. Thus, from now one
   we assume that $C_M < \infty$.

   We will also assume throughout the proof that $n > 1$, for
   otherwise the statement is again obvious (if $n=1$, then either $M
   = S^2$ and $S=$~equator, or $\omega|_{\pi_2(M)} = 0$).  Thus we
   assume from now that $\pi_1(S)= 1$ hence $N_S = 2C_M$.
   
   We now appeal to the spectral sequence described
   in~\S\ref{sb:spec-seq}. From Theorem~\ref{t:spectral-seq} it
   follows that
   \begin{equation} \label{eq:QH_i-0-n-mod-2CM} QH_i(S; \Lambda)=0 \quad
      \forall \, i \centernot\equiv 0, n (\bmod \,2C_M).
   \end{equation}
   Moreover, if $2C_M \centernot| n$ then:
   \begin{enumerate}
     \item either $QH_0(S;\Lambda)=0$, or the augmentation
      $\widetilde{\epsilon}_S: QH_0(S;\Lambda) \longrightarrow
      H_0(S;\Lambda)$ is an isomorphism.
     \item $QH_n(S;\Lambda) = \mathbb{Z} e_S$ (and $e_S$ is not a torsion
      element).
   \end{enumerate}

   \cmred{We prove statement~\eqref{ig:n-odd} of the theorem, i.e.
     when $n=$~odd.}  Let $a \in H_n(M)$ be an element with $a \cdot
   [S]=0$. Consider $$y = a * e_S \in QH_0(S;\Lambda).$$ We claim that
   $y=0$.  Indeed, either $QH_0(S;\Lambda)=0$ in which case $y=0$, or
   $\widetilde{\epsilon}_S: QH_0(S; \Lambda) \longrightarrow H_0(S)$
   is an isomorphism and then $\widetilde{\epsilon}_S(y) = a \cdot
   [S]=0$, hence $y=0$ again.

   On the other hand $i_S(y) = a * i_L(e_S) = a * [S]$, which implies
   $a*[S]=0$. Note that $[S] \cdot [S] =0$. Therefore, if we take $a =
   [S]$ we obtain $[S]*[S]=0$. This completes the proof for the case
   $n=$~odd.
   
   We now turn to statement~\eqref{ig:n-even} of the theorem, hence
   assume that $n=$~even. We first deal with the case~\eqref{ig:ndiv},
   i.e.  assume that $C_M \centernot| n$. Let $b \in QH_0(M)$. Put $u
   = b*e_S \in QH_{-n}(S;\Lambda)$.  By~\eqref{eq:QH_i-0-n-mod-2CM} we
   have $QH_{-n}(S;\Lambda)=0$, hence $u=0$. On the other hand $i_S(u)
   = b*i_S(e_S) = b*[S]$. This proves the case~\eqref{ig:ndiv}.

   To prove~\eqref{ig:div}, assume that $C_M | n$. We will first
   assume that $2C_M \centernot| n$. Let $b \in QH_0(M)$ and put $w =
   b * e_S \in QH_{-n}(S;\Lambda)$. By the discussion above we have
   $$QH_{-n}(S;\Lambda) = QH_n(S;\Lambda) t^{n/C_M} = \mathbb{Z} e_S
   t^{n/C_M}.$$ It follows that $w = \eta_b e_S t^{n/C_M}$ for some
   $\eta_b \in \mathbb{Z}$. Applying $i_S$ to $w$ we get $$\eta_b [S]
   q^{n} = b*i_S(e_S) = b*[S].$$ As before we can take $b = [S]*[S]$
   and obtain $[S]^{*3} = \gamma_S [S]q^n$, where $\gamma_S =
   \eta_{\scriptscriptstyle [S]*[S]} \in \mathbb{Z}$.

   To complete the proof of point~\eqref{ig:div} of the theorem in the
   case $2C_M \centernot| n$, it remains to show that $4 | \gamma_S$.
   To this end put $z = [S]*e_S \in QH_0(S; \Lambda)$.  Note that
   $\widetilde{\epsilon}_S(z) = \#([S]\cdot[S])p = \pm 2p$, where $p \in
   QH_0(S)$ is the class of a point. Since $\widetilde{\epsilon}_S$ is
   an isomorphism it follows that $z$ is divisible by $2$ in
   $QH_0(S;\Lambda)$ (this does not necessarily hold if $2 C_M | n$). 
   In particular $z*z \in QH_{-n}(S;\Lambda)$ is
   divisible by $4$. At the same time by the theory recalled
   in~\S\ref{sb:HF} we also have
   $$z*z = ([S]*e_S)*([S]*e_S) = ([S]*([S]*e_S))*e_S = ([S]*[S])*e_S,$$
   hence $i_S(z*z) = [S]^{*3}$. It follows that $[S]^{*3}$ is
   divisible by $4$. But $[S]^{*3} = \gamma_S [S]q^n$ and $[S]$ is
   neither torsion nor divisible by any integer $\geq 2$.
   Consequently, $\gamma_S$ is divisible by $4$.  This completes the
   proof of point~\eqref{ig:div} of the theorem under the assumption
   that $2C_M \centernot| n$.

   Finally, it remains to treat the other case at point~\eqref{ig:div}
   of the theorem, i.e.  $n=$~even and $2C_M | n$.  It is easy to see
   that $S$ satisfies condition~$\mathscr{L}$ (e.g. by using
   Proposition~\ref{p:criterion}). Therefore this case is completely
   covered by Theorem~\ref{t:cubic-eq} (which has already been proved)
   together with Corollary~\ref{c:sig=0} and the short discussion
   after its statement.
\end{proof}

\subsection{Further results} \cntrsb \label{sb:further-res}

We present here a few other results that follow from the same ideas as
in the proofs of Theorems~\ref{t:cubic_eq-ccl}
and~\ref{t:cubic-eq-sphere-gnrl}.

\begin{thm} \label{t:L_1L_2} Let $L_1, L_2 \subset M$ be two
   Lagrangian submanifolds satisfying conditions~$(1)$~--~$(3)$ of
   Assumption~$\mathscr{L}$ (possibly with different minimal Maslov
   numbers). Assume that $[L_1] \cdot [L_2] = 0$. Then one of the
   following two (non exclusive) possibilities occur:
   \begin{enumerate}
     \item either $[L_1]$ and $[L_2]$ are proportional in $H_n(M;
      \mathbb{Q})$ and moreover we have the relation $[L_1]*[L_1] =
      \kappa [L_1]q^{n/2}$ in $QH(M; R^+_{\mathbb{Q}})$ for some
      $\kappa \in \mathbb{Z}$;
     \item or $[L_1]*[L_2] = 0$.
   \end{enumerate}
\end{thm}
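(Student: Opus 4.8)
The plan is to run, symmetrically in $L_1$ and $L_2$, the same module-theoretic mechanism used in the proofs of Theorems~\ref{t:cubic_eq-ccl} and~\ref{t:cubic-eq-sphere-gnrl}. We may assume $[L_1]\neq 0$ and $[L_2]\neq 0$ in $H_n(M;\mathbb{Q})$, since otherwise $[L_1]*[L_2]=0$ and alternative~(2) already holds. Fix spin structures on $L_1$ and $L_2$, and recall that conditions $(1)$ and $(2)$ of Assumption~$\mathscr{L}$ force $n$ to be even.

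First I would work on the side of $L_1$. Since $[L_2]\in H_n(M)$ is classical and $n$ is even with $N_{L_1}\mid n$, one may regard $[L_2]$ as a degree-$n$ element of $QH(M;\Lambda_{L_1})$ and form $z:=[L_2]*e_{L_1}\in QH_0(L_1;\Lambda_{L_1,\mathbb{Q}})$ via the module operation of~\S\ref{sb:HF}. Exactly as in the proof of Theorem~\ref{t:cubic_eq-ccl}, the identity $\widetilde{\epsilon}_{L_1}(c*e_{L_1})=\#(c\cdot[L_1])\,[\mathrm{point}]$ (a consequence of~\eqref{eq:kron-aug} with $h=[M]$, the higher $q$-powers of $c*[L_1]$ carrying no point class) gives, for $c=[L_2]$, that $\widetilde{\epsilon}_{L_1}(z)=\#([L_2]\cdot[L_1])\,[\mathrm{point}]=0$. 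By condition~$(3)$ and $N_{L_1}\mid n$, the space $QH_0(L_1;\Lambda_{L_1,\mathbb{Q}})$ is $2$-dimensional, and by the exact sequence of~\S\ref{sbsb:Q=QH_n} the degree-$0$ part of $\ker\widetilde{\epsilon}_{L_1}$ is the line spanned by $e_{L_1}t^{\nu_1}$; hence $z=\kappa'\,e_{L_1}t^{\nu_1}$ for a unique $\kappa'\in\mathbb{Q}$. Applying the $QH(M;\Lambda_{L_1})$-linear inclusion $i_{L_1}$, and using $i_{L_1}(e_{L_1})=[L_1]$ together with $t^{\nu_1}\mapsto q^{n/2}$, one obtains $[L_2]*[L_1]=\kappa'\,[L_1]\,q^{n/2}$ in $QH(M;R^+_{\mathbb{Q}})$. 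The mirror construction on $L_2$, starting from $w:=[L_1]*e_{L_2}$, yields in the same way a unique $\kappa''\in\mathbb{Q}$ with $[L_1]*[L_2]=\kappa''\,[L_2]\,q^{n/2}$.

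Next I would combine the two identities. Because $n$ is even, degree-$n$ classes quantum-commute, so $[L_1]*[L_2]=[L_2]*[L_1]$, and therefore $\kappa'\,[L_1]=\kappa''\,[L_2]$ in $H_n(M;\mathbb{Q})$. If $\kappa'=0$ then $[L_1]*[L_2]=\kappa'[L_1]q^{n/2}=0$, which is alternative~(2). If $\kappa'\neq 0$, then $[L_1]=(\kappa''/\kappa')[L_2]$, so $[L_1]$ and $[L_2]$ are proportional in $H_n(M;\mathbb{Q})$, and substituting into $[L_2]*[L_1]=\kappa'[L_1]q^{n/2}$ gives $[L_1]*[L_1]=(\kappa''/\kappa')\,[L_2]*[L_1]=\kappa''\,[L_1]\,q^{n/2}$, which is alternative~(1) with $\kappa:=\kappa''$. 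To complete this case I would still have to see that $\kappa$ is an \emph{integer}: for this one repeats the construction of $w=[L_1]*e_{L_2}$ over $\mathbb{Z}$, where it now lives in $QH_0(L_2;\Lambda_{L_2,\mathbb{Z}})$ and lies in $\ker\widetilde{\epsilon}_{L_2}$, whose degree-$0$ part modulo torsion equals $\mathbb{Z}\,e_{L_2}t^{\nu_2}$; applying $i_{L_2}$ and comparing with the rational identity $[L_1]*[L_2]=\kappa''[L_2]q^{n/2}$, while using that $[L_2]$ is non-torsion in $H_n(M;\mathbb{Z})$, forces $\kappa''\in\mathbb{Z}$.

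I expect the only genuine nuisance to be this last integrality step — tracking torsion in the integral Lagrangian quantum homology and in $QH(M;\mathbb{Z})$, and keeping the passage between the Laurent variables $t^{\nu_i}$ of the rings attached to $L_1$ and $L_2$ and the common variable $q^{n/2}$ of $R$ coherent, given that $L_1$ and $L_2$ may have different minimal Maslov numbers. The algebraic core — two symmetric applications of $\widetilde{\epsilon}$ and $i_L$, followed by the comparison via graded-commutativity — is short and purely formal.
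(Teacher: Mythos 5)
Your proposal is correct and follows essentially the same route as the paper's proof: both define the elements $[L_2]*e_{L_1}$ and $[L_1]*e_{L_2}$, use the vanishing of the augmentation (forced by $[L_1]\cdot[L_2]=0$ and condition~$(3)$) to conclude each is a multiple of $e_{L_i}t_i^{\nu_i}$, apply the quantum inclusions, and compare via commutativity of degree-$n$ classes. Your treatment of the integrality of $\kappa$ is slightly more explicit than the paper's (which simply asserts $\kappa_1,\kappa_2\in\mathbb{Z}$), but the argument is the same.
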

\begin{remnonum}
   Note that if possibly~(1) occurs in the theorem and moreover
   $N_{L_1} = N_{L_2} = 2$, then $\lambda_{L_1} = \lambda_{L_2}$. This
   is so because by the theorem $[L_1]$ and $[L_2]$ are proportional
   and $[L_i]$ is an eigenvector of the operator $P$ with eigenvalue
   $\lambda_{L_i}$ (see~\S\ref{sb:eigneval}).
\end{remnonum}

Here is a simple example of Lagrangians $L_1, L_2$ satisfying the
conditions of Theorem~\ref{t:L_1L_2}. We take $M$ to be the monotone
blow-up of ${\mathbb{C}}P^2$ at $3$ points and $L_1, L_2$ Lagrangian
spheres in the classes $[L_1] = H - E_1-E_2-E_3$, $[L_2] = E_2-E_3$
(using the notation of~\S\ref{sbsb:intro-exp-blcp2}).
See~\S\ref{sb:lag-spheres-blow-ups} for more details on how to
actually construct these spheres. Clearly $[L_1]\cdot [L_2]=0$, hence
the theorem implies that $[L_1]*[L_2]=0$ (which can of course be
confirmed also by direct calculation). One can construct many other
examples of this type in monotone blow-ups of ${\mathbb{C}}P^2$ at $3
\leq k \leq 8$ points.

On the other hand, if $L \subset M$ is a Lagrangian satisfying
conditions~$(1)$~--~$(3)$ of Assumption~$\mathscr{L}$ and we assume in
addition that $\chi(L) = 0$ then we can take $L = L_1 = L_2$.
Theorem~\ref{t:L_1L_2} then implies that $[L]*[L] = [L]\kappa q^{n/2}$
for some $\kappa \in \mathbb{Z}$. The simplest example should be when
$L$ is a $2$-torus, however we are not aware of any example of a
monotone Lagrangian $2$-torus satisfying conditions~$(1)$~--~$(3)$ of
Assumption~$\mathscr{L}$ and with $[L] \neq 0$. An easy (algebraic)
argument shows that such tori cannot exist in a symplectic
$4$-manifold with $b_2^+ = 1$ (e.g. in blow-ups of ${\mathbb{C}}P^2$).
It would be interesting to know if this holds in greater generality.

Finally, we remark that if one replaces the condition $[L_1] \cdot
[L_2] = 0$ by the stronger assumption that $L_1 \cap L_2 = \emptyset$,
and drops conditions~$(3)$,~$(4)$ of Assumption~$\mathscr{L}$, then it
still follows that $[L_1]*[L_2] = 0$. This is proved
in~\cite{Bi-Co:rigidity}-Theorem~2.4.1 (see also~\S8
in~\cite{Bi-Co:Yasha-fest}).

\begin{proof}[Proof of Theorem~\ref{t:L_1L_2}]
   Without loss of generality we may assume that both $[L_1]$ and
   $[L_2]$ are non-trivial in $H_n(M;\mathbb{Q})$, for otherwise
   possibility~(2) obviously holds.

   Define $y_1 = [L_2]*e_{L_1} \in QH_0(L_1; \Lambda^1_{\mathbb{Q}})$
   and $y_2 = [L_1]*e_{L_2} \in QH_0(L_2; \Lambda^2_{\mathbb{Q}})$.
   Here we have denoted $\Lambda^1_{\mathbb{Q}} = \mathbb{Q}[t_1^{-1},
   t_1]$ with $|t_1| = -N_{L_1}$ and $\Lambda^2_{\mathbb{Q}} =
   \mathbb{Q}[t_2^{-1}, t_2]$ with $|t_2|= -N_{L_2}$ since we have to
   distinguish between the coefficient rings of $L_1$ and $L_2$. Note
   that under the embeddings of $\Lambda^1_{\mathbb{Q}}$ and
   $\Lambda^2_{\mathbb{Q}}$ into $R_{\mathbb{Q}} =
   \mathbb{Q}[q^{-1},q]$ we have $t_1^{\nu_1} = q^{n/2} =
   t_2^{\nu_2}$. (See~\S\ref{sb:HF}.)

   Since $[L_1] \cdot [L_2] = 0$ and due to condition~$(3)$ of
   Assumption~$\mathscr{L}$, we have
   $$y_1 = \kappa_1 e_{L_1}t_1^{\nu_1}, 
   \quad y_2=\kappa_2 e_{L_2} t_2^{\nu_2},$$ for some $\kappa_1,
   \kappa_2 \in \mathbb{Z}$ and where $\nu_1 = n/N_{L_1}$, $\nu_2 = n/
   N_{L_2}$. At the same time we also have
   $$i_{L_1}(y_1) = i_{L_2}(y_2) = [L_1]*[L_2].$$ Here we have used the
   fact that $n$ must be even, hence $[L_1]*[L_2] = [L_2]*[L_1]$.

   It follows that $\kappa_1 [L_1] q^{n/2} = [L_1]*[L_2] = \kappa_2
   [L_2] q^{n/2}$ and the result follows. (As in the proof of
   Theorem~\ref{t:cubic_eq-ccl}, note that here too, the identities
   proved involve only positive powers of $q$ hence they hold in
   $QH(M;R^+)$ too.)
\end{proof}

The next result is concerned with Lagrangian spheres that do not
satisfy Assumption~$\mathscr{L}$, but rather~(2i-b) on
page~\pageref{i:CM-n} (after Theorem~\ref{t:cubic-eq-sphere}).

\begin{thm} \label{t:lag-spheres-no_L} Let $L_1, L_2 \subset M$ be
   oriented Lagrangian spheres in a closed monotone symplectic
   manifold $M$ of dimension $2n$. Assume that $n=$~even and $C_M | n$
   but $2C_M \centernot| n$.
   \begin{enumerate}
     \item \label{if:L1L2} If $[L_1] \cdot [L_2] = 0$ then
      $[L_1]*[L_2]=0$.
     \item \label{if:k-L1L2} If $k:= \#([L_1] \cdot [L_2]) \neq 0$
      then $$[L_1]^{*2} = [L_2]^{*2} = \tfrac{2 \varepsilon}{k}
      [L_1]*[L_2],$$ where $\varepsilon = (-1)^{n(n-1)/2}$.
      Furthermore, either $[L_1]^{*3} = [L_2]^{*3} = 0$ or $[L_1] =
      \pm [L_2]$ (the two possibilities not being exclusive).
   \end{enumerate}
\end{thm}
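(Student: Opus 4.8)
The plan is to argue exactly as in the proofs of Theorems~\ref{t:cubic_eq-ccl}, \ref{t:cubic-eq-sphere-gnrl} and~\ref{t:L_1L_2}, working directly with the Lagrangian quantum homologies of $L_1$ and $L_2$ (these spheres fall into case~(2i-b), hence do \emph{not} satisfy Assumption~$\mathscr{L}$, so one cannot invoke Theorem~\ref{t:cubic-eq}). Fix spin structures on $L_1,L_2$; since both are simply connected spheres with $N_{L_1}=N_{L_2}=2C_M$, I would work over the single ring $\Lambda_{\mathbb Q}=\mathbb Q[t^{-1},t]$, $|t|=-2C_M$, embedded in $R_{\mathbb Q}$ by $t\mapsto q^{C_M}$. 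The cases $C_M=\infty$ (then $QH(M)\cong H(M)$ with the intersection product, so $[L_i]*[L_j]=[L_i]\cdot[L_j]$ and $[L_i]^{*3}\in H_{-n}(M)=0$) and $n=1$ are trivial, so assume $C_M<\infty$ and $n\ge 2$. From the spectral sequence computations used in the proof of Theorem~\ref{t:cubic-eq-sphere-gnrl} (applicable since $2C_M\nmid n$), for each $i$: $QH_n(L_i;\Lambda_{\mathbb Q})=\mathbb Q e_{L_i}$; $QH_0(L_i;\Lambda_{\mathbb Q})$ is either $0$ or carried isomorphically by the augmentation $\widetilde\epsilon_{L_i}$ onto the line spanned by $[\mathrm{pt}]\in H_0(L_i;\mathbb Q)$; and, as $C_M\mid n$, $QH_{-n}(L_i;\Lambda_{\mathbb Q})=\mathbb Q e_{L_i}t^{\,n/C_M}$, which $i_{L_i}$ sends onto $\mathbb Q[L_i]q^n$. (Crucially there is no element of the form $e_{L_i}t^{\,m}$ in degree $0$ or $-n$ — this is what forces the quadratic coefficient to vanish below, exactly as in case~(2i-b).) For part~\eqref{if:L1L2}, set $w=[L_2]*e_{L_1}\in QH_0(L_1;\Lambda_{\mathbb Q})$; identity~\eqref{eq:kron-aug} gives $\widetilde\epsilon_{L_1}(w)=\#([L_2]\cdot[L_1])\,[\mathrm{pt}]=0$, so whether $QH_0(L_1;\Lambda_{\mathbb Q})$ vanishes or $\widetilde\epsilon_{L_1}$ is injective on it, $w=0$; applying $i_{L_1}$ and using $i_{L_1}(e_{L_1})=[L_1]$ yields $[L_1]*[L_2]=[L_2]*[L_1]=i_{L_1}(w)=0$ ($n$ even, so the product on $H_n(M)$ is commutative).

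For part~\eqref{if:k-L1L2} I would first extract the quadratic identity. Put $x=\tfrac1k[L_2]*e_{L_1}\in QH_0(L_1;\Lambda_{\mathbb Q})$; by~\eqref{eq:kron-aug}, $\widetilde\epsilon_{L_1}(x)=[\mathrm{pt}]\neq 0$, so $QH_0(L_1;\Lambda_{\mathbb Q})=\mathbb Q x$ and $\widetilde\epsilon_{L_1}$ is an isomorphism on it. Since $\#([L_1]\cdot[L_1])=\varepsilon\chi(S^n)=2\varepsilon$, the element $[L_1]*e_{L_1}$ has the same augmentation as $2\varepsilon x$, hence $[L_1]*e_{L_1}=2\varepsilon x=\tfrac{2\varepsilon}{k}[L_2]*e_{L_1}$; applying $i_{L_1}$ (and $[L_2]*[L_1]=[L_1]*[L_2]$) gives $[L_1]^{*2}=\tfrac{2\varepsilon}{k}[L_1]*[L_2]$. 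The same computation with $L_1,L_2$ exchanged (using $\#([L_2]\cdot[L_1])=k$ again, as $n$ is even) gives $[L_2]^{*2}=\tfrac{2\varepsilon}{k}[L_1]*[L_2]$ as well, which is the first displayed assertion. In particular $[L_1]^{*2}=[L_2]^{*2}=:w$ and $[L_1]*[L_2]=\tfrac{k\varepsilon}{2}\,w$ in $QH(M;R^+_{\mathbb Q})$.

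For the dichotomy I would feed this back into the cubic relation. By Theorem~\ref{t:cubic-eq-sphere-gnrl} ($C_M\mid n$, $n$ even) there are $\gamma_{L_1},\gamma_{L_2}\in\mathbb Z$ with $[L_i]^{*3}=\gamma_{L_i}[L_i]q^n$. Using commutativity and associativity of the quantum product and the identities above, $[L_1]^{*3}=[L_1]*[L_2]^{*2}=([L_1]*[L_2])*[L_2]=\tfrac{k\varepsilon}{2}\,w*[L_2]=\tfrac{k\varepsilon}{2}[L_2]^{*3}$, and symmetrically $[L_2]^{*3}=\tfrac{k\varepsilon}{2}[L_1]^{*3}$; substituting gives $[L_1]^{*3}=\tfrac{k^2}{4}[L_1]^{*3}$, hence $(4-k^2)\gamma_{L_1}=0$ since $[L_1]\neq 0$. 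If $k\neq\pm2$ then $\gamma_{L_1}=0$, so $[L_1]^{*3}=0$, and then $[L_2]^{*3}=\tfrac{k\varepsilon}{2}[L_1]^{*3}=0$ too. If $k=\pm2$: either $\gamma_{L_1}=0$, forcing $\gamma_{L_2}=0$ and both cubics to vanish, or $\gamma_{L_1}\neq0$, in which case $[L_1]^{*3}=\pm\varepsilon[L_2]^{*3}$ reads $\gamma_{L_1}[L_1]=\pm\varepsilon\gamma_{L_2}[L_2]$, so $\gamma_{L_2}\neq 0$ and $[L_2]=\lambda[L_1]$ for some $\lambda\in\mathbb Q^\times$; comparing self-intersections, $2\varepsilon=\#([L_2]\cdot[L_2])=\lambda^2\#([L_1]\cdot[L_1])=2\varepsilon\lambda^2$, so $\lambda=\pm1$ and $[L_1]=\pm[L_2]$. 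Thus in every case at least one alternative holds, and neither excludes the other, as the Lagrangian sphere $S$ in a quadric shows ($L_1=L_2=S$, so $[L_1]=[L_2]$, while $[S]^{*3}\neq0$).

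The step I expect to be the main obstacle is the borderline value $k=\pm2$, where the relation $(4-k^2)\gamma_{L_1}=0$ becomes vacuous: one cannot then conclude that the cubics vanish and must instead observe that a non-zero cubic forces $[L_1]$ and $[L_2]$ to be rationally proportional, and use the purely topological constraint $[L_i]\cdot[L_i]=\varepsilon\chi(S^n)=2\varepsilon$ to reduce the proportionality factor to $\pm1$. A secondary point to get right is that $QH_0(L_i;\Lambda_{\mathbb Q})$ is genuinely one-dimensional rather than zero; this is exactly where the hypothesis $k\neq0$ enters, through the element $x$ of non-zero augmentation, and it is also what makes the quadratic coefficient of $x*x$ automatically absent.
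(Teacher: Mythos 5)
Your proof is correct and follows essentially the same route as the paper: part~(\ref{if:L1L2}) and the quadratic identity are obtained by pushing $[L_j]*e_{L_i}$ and $[L_i]*e_{L_i}$ into the one-dimensional $QH_0(L_i)$ and applying $i_{L_i}$, and the dichotomy comes from the same bootstrapping of the cubic relation $[L_i]^{*3}=\gamma_i[L_i]q^n$, with only a cosmetic reordering (you square the relation first to get $(4-k^2)\gamma_1=0$, whereas the paper first deduces $[L_1]=\tfrac{k\varepsilon}{2}[L_2]$ and then squares to force $k=\pm2$). The only blemish is the parenthetical claim that there is no element of the form $e_{L_i}t^{m}$ in degree $-n$ — there is, namely $e_{L_i}t^{n/C_M}$, as you yourself use in the preceding sentence — but this remark plays no role in the argument.
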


\begin{rem} \label{r:lag-spheres-no_L} Recall from
   Theorem~\ref{t:cubic-eq-sphere} that each of the Lagrangians $L_i$,
   $i=1,2$, satisfies a cubic equation of the type: $[L_i]^{*3} =
   \gamma_i [L_i]q^n$. In general, it seems that the coefficients
   $\gamma_1$ and $\gamma_2$ might differ one from the other, however
   in case~\eqref{if:k-L1L2} of the theorem it is easy to see that
   $\gamma_1 = \gamma_2$.
\end{rem}

\begin{proof}[Proof of Theorem~\ref{t:lag-spheres-no_L}]
   By standard arguments there exist canonical isomorphisms $QH_*(L_i)
   \to H_*(L_i;\Lambda)$, $i=1,2$. Thus 
   $$QH_0(L_i) = \mathbb{Z} p_i, \quad QH_n(L_i) = \mathbb{Z} e_{L_i},$$
   where $p_i$ is the class of a point in $L_i$ and $e_{L_i}$ is the
   fundamental class of $L_i$.

   Assume first that $[L_1] \cdot [L_2] = 0$. In view of the
   isomorphism just mentioned we have $[L_1] * e_{L_2} = 0$. Applying
   $i_{L_2}$ to the last equality we obtain $[L_1]*[L_2] = 0$.

   Assume now that $k: = \#([L_1] \cdot [L_2]) \neq 0$. Due to our
   assumptions we have:
   \begin{enumerate}[(i)]
     \item \label{i:L2eL1} $[L_2]*e_{L_1} = k p_1$.
     \item \label{i:L1eL2} $[L_1]*e_{L_2} = k p_2$.
     \item \label{i:L1eL1} $[L_1]*e_{L_1} = 2\varepsilon p_1$.
     \item \label{i:L2eL2} $[L_2]*e_{L_2} = 2 \varepsilon p_2$.
   \end{enumerate}
   From~\eqref{i:L2eL1} and~\eqref{i:L1eL2} it follows that
   $$i_{L_1}(p_1) = i_{L_2}(p_2) = \tfrac{1}{k} [L_1]*[L_2].$$
   From~\eqref{i:L1eL1} and~\eqref{i:L2eL2} we obtain:
   $$i_{L_1}(p_1) = \tfrac{\varepsilon}{2} [L_1]*[L_1], \quad 
   i_{L_2}(p_2) = \tfrac{\varepsilon}{2} [L_2]*[L_2].$$ This implies
   the first result of point~\eqref{if:k-L1L2} of the
   theorem.

   To prove the other statements, we use point~\eqref{i:div} of
   Theorem~\ref{t:cubic-eq-sphere}. By that theorem there exist
   $\gamma_1, \gamma_2 \in \mathbb{Z}$ such that
   $$[L_1]^{*3} = \gamma_1 [L_1]q^n, \quad [L_2]^{*3} = \gamma_2 [L_2] q^n.$$
   It follows that $$\gamma_1 [L_1]q^n = [L_1]^{*3} = [L_2]^{*2}*[L_1]
   = \tfrac{k \varepsilon}{2} [L_2]^{*3} = \tfrac{k \varepsilon}{2}
   \gamma_2 [L_2]q^n,$$ hence $\gamma_1[L_1] = \tfrac{k
     \varepsilon}{2}\gamma_2[L_2]$. It follows that $\gamma_1=0$ if
   and only if $\gamma_2 = 0$. Now, if $\gamma_1 \neq 0$ then
   $$\gamma_1 [L_1]\cdot [L_2] = \tfrac{k \varepsilon}{2} \gamma_2 [L_2]\cdot [L_2] = 
   \tfrac{k \varepsilon}{2} \gamma_2 2 \varepsilon p,$$ where $p \in
   H_0(M)$ is the class of a point. At the same time we have
   $[L_1]\cdot [L_2] = k p$ and so $k \gamma_1 = k\gamma_2$. It
   follows that $\gamma_1 = \gamma_2$ and $[L_1] = \tfrac{k
     \varepsilon}{2}[L_2]$. Squaring the last equality with respect to
   the (classical) intersection product we obtain: $2 \varepsilon =
   \tfrac{k^2}{4} 2 \varepsilon$, hence $k = \pm 2$. This shows that
   $[L_1] = \pm [L_2]$.
\end{proof}



\section{The discriminant and Lagrangian cobordisms} \cntrs
\label{s:disc-lcob} 

This section provides the proofs of Theorem~\ref{t:cob} and a
generalization of Corollary~\ref{c:del_1=del_2}. 

\pbred{In what follows Lagrangian cobordisms $V$ will be generally
  assumed to be connected. In contrast, their boundaries $\partial V$
  are allowed to have several connected components.}

We begin with:
\begin{proof}[Proof of Theorem~\ref{t:cob}]
   Before going into the details of the proof, here is the rationale
   behind it. To the Lagrangian cobordism $V$ we can associate a
   (relative) quantum homology $QH(V, \partial V)$ which has a quantum
   product. The quantum product on $QH(V, \partial V)$ is related to
   the quantum products for the ends of $V$ via a quantum connectant
   $\delta: QH(V, \partial V) \longrightarrow QH(\partial V) =
   \oplus_{i=1}^r QH(L_i)$. This makes it possible to find relations
   between the products on the quantum homologies $QH(L_i)$ of
   different ends of $V$ and the quantum product on $QH(V, \partial
   V)$. In particular this gives the desired relation between the
   discriminants of the different ends.

   We now turn to the details of the proof. We will use here several
   versions of the pearl complex and its homology (also called
   Lagrangian quantum homology) both for Lagrangian cobordisms as well
   as for their ends.  We refer the reader to~\cite{Bi-Co:Yasha-fest,
     Bi-Co:rigidity, Bi-Co:lagtop} for the foundations of the theory
   in the case of closed Lagrangians and to~\S5 of~\cite{Bi-Co:cob1}
   in the case of cobordisms.

   Throughout this proof we will work with $\mathbb{Q}$ as the base
   field and with $\Lambda = \mathbb{Q}[t^{-1}, t]$ or $\Lambda^{+} =
   \mathbb{Q}[t]$ as coefficient rings. We denote by $\mathcal{C}$ and
   $\mathcal{C}^+$ the pearl complexes with coefficients in $\Lambda$
   and $\Lambda^+$ respectively, and by $QH$ and $Q^+H$ their
   homologies. The latter is sometimes called the positive Lagrangian
   quantum homology.

   Before we go on, a small remark regarding the coefficients is in
   order. Throughout this proof we grade the variable $t \in \Lambda$
   as $|t|=-N_V$. This is the standard grading for $QH(V)$ and $QH(V,
   \partial V)$ and their positive versions. We use the same
   coefficient rings (and grading) also for $QH(L_i)$ and its positive
   version. This is possible since $N_V | N_{L_i}$, hence our ring
   $\Lambda^+$ is an extension of the corresponding ring in which the
   degree of $t$ is $-N_{L_i}$.

   Recall that (for any Lagrangian submanifold) the positive quantum
   homology $Q^+H$ admits a natural map $Q^+H \longrightarrow QH$
   induced by the inclusion $\mathcal{C}^+ \longrightarrow
   \mathcal{C}$. Again, for degree reasons the induced map in homology
   is an isomorphism in degree $0$ and surjective in degree $1$:
   \begin{equation} \label{eq:Q^+H-QH} Q^+H_0 \xrightarrow{\; \; \cong
        \; \;} QH_0, \quad Q^+H_1 \rrightarrow QH_1.
   \end{equation}
   In fact, the last map is an isomorphism whenever the minimal Maslov
   number is $> 2$. We also have $Q^+H_n(K) \cong H_n(K)$ for every
   $n$-dimensional Lagrangian submanifold $K$.

   Coming back to the proof of the theorem, we first claim there is a
   commutative diagram
   \begin{equation} \label{eq:diag-QH-H}
      \begin{CD}
         Q^+H_1(V) @>j_Q>>Q^+H_1(V, \partial V) @ > \delta >>
         Q^+H_0(\partial V) @>i_Q>> Q^+H_0(V) \\
         @V s VV @V s  VV @V s VV @V s VV \\
         H_1(V) @>j>> H_1(V, \partial V) @ > \partial >> H_0(\partial
         V) @>i>> H_0(V)\\
         @. @VVV @VVV @. \\
         @. 0 @. 0 @.
      \end{CD}
   \end{equation}
   with exact rows and columns. The second row of the diagram is the
   classical homology sequence for the pair $(V, \partial V)$ with
   $\partial$ being the connecting homomorphism (we use $\mathbb{Q}$
   coefficients here). The first row is its quantum homology analogue,
   and we remark that the quantum connectant $\delta$ is
   multiplicative with respect to the quantum product (see~\S5
   of~\cite{Bi-Co:cob1} and~\cite{BSing:msc}). The vertical maps $s$
   come from the following general exact sequence of chain complexes:
   \begin{equation} \label{eq:ses-s} 0 \longrightarrow t \mathcal{C}^+
      \xrightarrow{\;\, \iota \;\,} \mathcal{C}^+ \xrightarrow{\;\, s
          \; \, } CM \longrightarrow 0,
   \end{equation}
   where $CM$ stand for the Morse complex (defined using the same
   Morse function and metric as used for the pearl complex, but with
   coefficient in $\mathbb{Q}$ rather than $\Lambda^+$). The second
   map in this exact sequence, $s: \mathcal{C}^+ \longrightarrow CM$,
   is induced by $t \mapsto 0$ (i.e. it sends a pearly chain to its
   classical part, omitting the $t$'s), and $\iota$ stand for the
   inclusion. We now explain why the two middle $s$ maps
   in~\eqref{eq:diag-QH-H} are surjective. We start with the third $s$
   map (i.e. the one before the rightmost $s$). We have:
   \begin{equation} \label{eq:H_0(del-V)}
      H_0(\partial V) = \bigoplus_{i=1}^r H_0(L_i), \quad
      Q^+H_0(\partial V) = \bigoplus_{i=1}^r Q^+H_0(L_i).
   \end{equation}
   Next, note that the composition of $s: Q^+H_0(L_i) \longrightarrow
   H_0(L_i)$ with the inclusion $H_0(L_i) \subset H_0(L_i; \Lambda^+)$
   coincides with the augmentation $\widetilde{\epsilon}_{L_i} :
   Q^+H_0(L_i) \longrightarrow H_0(L; \Lambda^+)$. The fact that $s$
   is surjective now follows easily from~\S\ref{sbsb:Q=QH_n}
   and~\eqref{eq:Q^+H-QH}.

   The surjectivity of the second to the left $s$ map requires a
   different argument.  Consider the chain complex $\mathcal{D}_* = (t
   \mathcal{C}^+)_*$, viewed as a subcomplex of $\mathcal{C}^+$. In
   view of the exact sequence~\eqref{eq:ses-s} the surjectivity of the
   second to the left $s$ map in~\eqref{eq:diag-QH-H} would follow if
   we show that $H_0(\mathcal{D})=0$. To this end consider the
   following filtration $\mathcal{F}_{\bullet}\mathcal{D}$ of
   $\mathcal{D}$ by subcomplexes, defined by:
   \begin{align*}
      & \mathcal{F}_m \mathcal{D} := t^{-m} \mathcal{D} = t^{-m+1}
      \mathcal{C}^+ \quad \forall \, m \leq 0, \\
      & \mathcal{F}_k \mathcal{D} := \mathcal{D} \quad \forall \, k
      \geq 0.
   \end{align*}
   Note that this filtration is very similar to the one described
   in~\S\ref{sb:spec-seq} only that here it is applied to the complex
   $\mathcal{D}$ rather than to $\mathcal{C}$.

   A simple calculation (similar to the one in~\S\ref{sb:spec-seq})
   shows that the first page of the spectral sequence associated to
   this filtration satisfies:
   \begin{align*}
      & E^1_{p,q} \cong t^{-p+1} H_{{\scriptscriptstyle
          p+q+N_V-pN_V}}(V, \partial V) \quad
      \forall \, p \leq 0, \\
      & E^1_{p,q} = 0 \quad \forall \, p \geq 1.
   \end{align*}
   It follows from the assumption of the theorem that for all $p,q$
   with $p+q=0$ we have $E^1_{p,q}=0$, hence also
   $E^{\infty}_{p,q}=0$. Since this spectral sequence converges to
   $H_*(\mathcal{D})$ this implies that $H_0(\mathcal{D})=0$. This
   completes the proof of the surjectivity of the second to the left
   $s$ map in~\eqref{eq:diag-QH-H}.

   We proceed now with the proof of the theorem, based on the
   diagram~\eqref{eq:diag-QH-H} and its properties. We first remark
   that due to the assumptions of the theorem the number of ends of
   $V$ must be $r \geq 2$. Indeed, by the results of~\cite{Bi-Co:cob1}
   if a Lagrangian submanifold $L_1$ is Lagrangian null-cobordant
   (i.e. there exists a monotone Lagrangian cobordism $V$ with only
   one end being $L_1$) then $HF(L_1,L_1)=0$, in contrast with the
   assumption that $L_1$ satisfies condition $(3)$ of
   Assumption~$\mathscr{L}$. We therefore assume from now on that $r
   \geq 2$.

   Denote by $p_i \in H_0(L_i) \subset H_0(\partial V)$ the class
   corresponding to a point in $L_i$. Let $\alpha_2, \ldots, \alpha_r
   \in H_1(V, \partial V)$ be classes with $\partial \alpha_i = p_1 -
   p_i$. Choose lifts $\overline{p}_i \in Q^+H_0(\partial V)$ of the
   $p_i$'s under the map $s$ as well as lifts $\overline{\alpha}_2,
   \ldots, \overline{\alpha}_r \in Q^+H_1(V,\partial V)$ of $\alpha_2,
   \ldots, \alpha_r$. Denote by $e_V \in Q^+H_{n+1}(V, \partial V)$
   the unity and by $e_{L_i} \in Q^+H_n(L_i)$ the unities
   corresponding to the $L_i$'s. Note that $\delta(e_V) = e_{L_1} +
   \cdots + e_{L_r}$. Finally, put $\nu = n / N_V$. (Recall that
   $N_{L_i} | n$ by assumption, and since $N_V | N_{L_i}$ we have $N_V
   | n$.) Since the Lagrangians $L_i$ satisfy conditions~(1)~--~(3) of
   Assumption~$\mathscr{L}$ and in view of~\S\ref{sbsb:Q=QH_n}, we
   have:
   $$Q^+H_0(\partial V)
   \cong QH_0(\partial V) = \mathbb{Q} \overline{p}_1 \oplus \cdots
   \oplus \mathbb{Q} \overline{p}_r \oplus \mathbb{Q} e_{L_1} t^{\nu}
   \oplus \cdots \oplus \mathbb{Q} e_{L_r} t^{\nu}.$$

   \begin{prop} \label{p:basis-delta} $\dim_{\mathbb{Q}}
      (\textnormal{image\,} \delta) = r$. Moreover, for every choice
      of $\alpha_i$'s and $\overline{\alpha}_i$'s the elements
      $$\delta(\overline{\alpha}_2), \ldots, \delta(\overline{\alpha}_r), 
      (e_{L_1} + \cdots + e_{L_r}) t^{\nu}$$ form a basis (over
      $\mathbb{Q}$) of the vector space $\textnormal{image\,}{\delta}
      \subset QH_0(\partial V)$.
   \end{prop}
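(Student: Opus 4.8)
The whole statement is read off the commutative diagram~\eqref{eq:diag-QH-H}, using the two facts already in hand: its top row is exact and the two middle vertical maps $s$ are surjective. Exactness of the top row gives $\operatorname{im}\delta=\ker\bigl(i_Q\colon Q^+H_0(\partial V)\to Q^+H_0(V)\bigr)$, so the problem is to pin down this kernel. First I would record that the $r$ proposed vectors all lie in $\operatorname{im}\delta$: the vectors $\delta(\overline\alpha_i)$ trivially, and $(e_{L_1}+\cdots+e_{L_r})t^\nu=\delta(e_V)\,t^\nu=\delta(e_V t^\nu)$ since $\delta$ is $\Lambda^+$-linear and $\delta(e_V)=e_{L_1}+\cdots+e_{L_r}$.

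The easy half, $\dim\operatorname{im}\delta\ge r$, is linear independence of these $r$ vectors. I would apply $s\colon Q^+H_0(\partial V)\to H_0(\partial V)$ to a relation $\sum_{i\ge2}c_i\,\delta(\overline\alpha_i)+c_1(e_{L_1}+\cdots+e_{L_r})t^\nu=0$. Commutativity of the diagram gives $s(\delta(\overline\alpha_i))=\partial(s(\overline\alpha_i))=\partial\alpha_i=p_1-p_i$, while $s$ kills $t$ (and $\nu\ge1$), so it annihilates $(e_{L_1}+\cdots+e_{L_r})t^\nu$. Hence $\sum_{i\ge2}c_i(p_1-p_i)=0$ in $H_0(\partial V)=\bigoplus_i\mathbb{Q}p_i$, forcing $c_2=\cdots=c_r=0$; then $c_1(e_{L_1}+\cdots+e_{L_r})t^\nu=0$ forces $c_1=0$ because the $e_{L_i}t^\nu$ belong to our fixed basis of $Q^+H_0(\partial V)$.

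For $\dim\operatorname{im}\delta\le r$ I would slice $\operatorname{im}\delta$ along $s$. Since the middle $s$ is onto and the bottom row is exact, $s(\operatorname{im}\delta)=\partial\bigl(s(Q^+H_1(V,\partial V))\bigr)=\partial\bigl(H_1(V,\partial V)\bigr)=\ker\bigl(i\colon H_0(\partial V)\to H_0(V)\bigr)$, which has dimension $r-1$ because $V$ is connected. On the other hand $\ker\bigl(s|_{Q^+H_0(\partial V)}\bigr)=E:=\bigoplus_i\mathbb{Q}e_{L_i}t^\nu$, so $\ker\bigl(s|_{\operatorname{im}\delta}\bigr)=\operatorname{im}\delta\cap E$, and it remains to prove $\dim\bigl(\operatorname{im}\delta\cap E\bigr)\le1$. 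Now $\operatorname{im}\delta\cap E=\ker(i_Q|_E)$, and $i_Q|_E$ is the composite $E\xrightarrow{\cong}Q^+H_n(\partial V)\xrightarrow{i_Q}Q^+H_n(V)\xrightarrow{\cdot\,t^\nu}Q^+H_0(V)$ (the first map being the degree shift $e_{L_i}t^\nu\mapsto e_{L_i}$). Exactness of the long exact sequence in degree $n$ identifies $\ker\bigl(i_Q\colon Q^+H_n(\partial V)\to Q^+H_n(V)\bigr)$ with the image of $\delta\colon Q^+H_{n+1}(V,\partial V)\to Q^+H_n(\partial V)$; since $Q^+H_{n+1}(V,\partial V)\cong\mathbb{Q}\,e_V$ (only the relative fundamental class survives the pearl spectral sequence, for dimension reasons) and $\delta(e_V)=e_{L_1}+\cdots+e_{L_r}\ne0$, this kernel is exactly the line $\mathbb{Q}(e_{L_1}+\cdots+e_{L_r})$. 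Hence $\ker(i_Q|_E)$ is one-dimensional as soon as $\cdot\,t^\nu\colon Q^+H_n(V)\to Q^+H_0(V)$ is injective.

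The one non-formal ingredient, and the place where the hypotheses of Theorem~\ref{t:cob} enter, is precisely this injectivity of $\cdot\,t^\nu$. I would establish it from the pearl spectral sequence of $Q^+H_*(V)$: one has $Q^+H_n(V)\cong H_n(V)$ for dimension reasons, and the copy $H_n(V)\,t^\nu$ that this maps to inside the $E^1$ page of $Q^+H_0(V)$ neither supports a differential (degree reasons) nor receives one, because any incoming differential would have to originate from a term built on $H_{1+kN_V}(V)$ with $1\le k\le\nu-1$, and these vanish by hypothesis~(2). Thus $H_n(V)\,t^\nu$ injects into $Q^+H_0(V)$, which is the required injectivity; hypothesis~(1) is equivalent to~(2) under Poincar\'{e}--Lefschetz duality on $V$ and is the form of the assumption already used to obtain the surjectivity of the middle $s$ maps in~\eqref{eq:diag-QH-H}. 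Granting this, $\operatorname{im}\delta\cap E$ is the line $\mathbb{Q}(e_{L_1}+\cdots+e_{L_r})t^\nu$, so $\dim\operatorname{im}\delta=(r-1)+1=r$ and the $r$ independent vectors exhibited above form a basis. I expect this spectral-sequence vanishing step, rather than the diagram chase, to be the main obstacle.
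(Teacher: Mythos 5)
Your argument is correct, but it reaches the conclusion by a genuinely different route than the paper. The paper fixes the explicit basis $\{\overline{p}_1, \delta(\overline{\alpha}_2), \ldots, \delta(\overline{\alpha}_r), \delta(e_V)t^{\nu}, e_{L_2}t^{\nu}, \ldots, e_{L_r}t^{\nu}\}$ of $Q^+H_0(\partial V)$ and shows that the span of the complementary $r$ vectors $\overline{p}_1, e_{L_2}t^{\nu}, \ldots, e_{L_r}t^{\nu}$ meets $\textnormal{image\,}\delta$ trivially; the engine is Lemma~\ref{l:div-t} (if $\delta(\eta)$ is divisible by $t$ then so is $\eta$), applied $\nu$ times to drag any element of $\textnormal{image\,}\delta \cap E$ back to $t^{\nu}\,\delta\bigl(Q^+H_{n+1}(V,\partial V)\bigr) = \mathbb{Q}\,\delta(e_V)t^{\nu}$. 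You instead split $\dim(\textnormal{image\,}\delta)$ as $\dim s(\textnormal{image\,}\delta) + \dim(\textnormal{image\,}\delta \cap \ker s)$, compute the first summand as $r-1$ from the classical sequence, and identify the second with $\ker\bigl(i_Q\colon Q^+H_n(\partial V) \to Q^+H_n(V)\bigr)$ via the degree-$n$ portion of the quantum long exact sequence, reducing everything to injectivity of $\cdot\,t^{\nu}\colon Q^+H_n(V) \to Q^+H_0(V)$, which you then extract from the pearl spectral sequence of $V$. Both roads consume hypothesis~(2) of Theorem~\ref{t:cob} at exactly one place — the paper through injectivity of $\partial\colon H_{1+jN_V}(V,\partial V) \to H_{jN_V}(\partial V)$ inside Lemma~\ref{l:div-t}, you through the vanishing of the incoming differentials at $E^r_{-\nu,\nu}$ — so they are morally the same input packaged differently. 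What your version buys is a one-shot computation on $V$ in place of the $\nu$-fold divisibility induction, plus the pleasant observation that hypotheses~(1) and~(2) are Lefschetz-dual over $\mathbb{Q}$; what it costs is that you need exactness of the quantum sequence at $Q^+H_n(\partial V)$ (one degree beyond what diagram~\eqref{eq:diag-QH-H} displays, though this is available from the same sources) and a slightly more delicate bookkeeping step, namely that the surjection $E^1_{-\nu,\nu} \twoheadrightarrow E^{\infty}_{-\nu,\nu}$ being an isomorphism really does give injectivity of $\cdot\,t^{\nu}$ on $Q^+H_n(V) \cong H_n(V)$ and not merely on the associated graded — this is fine here because $E^{\infty}_{-\nu,\nu}$ is the lowest nonzero filtration level, but it deserves a sentence.
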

   We defer the proof of the lemma and continue with the proof of our
   theorem.
   
   Denote by $\mathcal{B} \subset Q^+H_1(V, \partial V)$ the kernel of
   $\delta : Q^+H_1(V, \partial V) \longrightarrow Q^+H_0(\partial
   V)$. By Proposition~\ref{p:basis-delta} the elements
   $$\overline{\alpha}_2, \ldots, \overline{\alpha}_r, e_V t^{\nu}$$
   induce a basis for the vector space $Q^+H_1(V, \partial V) /
   \mathcal{B}$.

   We now continue by proving that $\Delta_{L_1} = \Delta_{L_2}$. The
   other equalities follow by the same recipe. Using the preceding
   basis we can write:
   \begin{equation} \label{eq:a2*a2-delta-1}
      \begin{aligned} & \overline{\alpha}_2 *
         \overline{\alpha}_2 = \sum_{j=2}^r \xi_j \overline{\alpha}_j
         t^{\nu} + Bt^{\nu} + \rho e_V t^{2 \nu}, \\
         & \delta(\overline{\alpha}_2) = \overline{p}_1 - \overline{p}_2 +
         \sum_{k=1}^r a_k e_{L_k} t^{\nu},
      \end{aligned}
   \end{equation}
   for some $\xi_j, a_k, \rho \in \mathbb{Q}$ and $B \in \mathcal{B}$.
   For the first equality we have used the fact that
   $\overline{\alpha}_2 * \overline{\alpha}_2 \in Q^+H_{1-n}(V,
   \partial V) \cong t^{\nu} Q^+H_1(V, \partial V)$.

   We will also need a similar equality to the second one
   in~\eqref{eq:a2*a2-delta-1}, but for $\delta(\overline{\alpha}_i)$:
   \begin{equation} \label{eq:delta-alpha-i}
      \delta(\overline{\alpha}_i) = \overline{p}_1 - \overline{p}_i +
      \sum_{k=1}^r a_k^{(i)} e_{L_k} t^{\nu}, \quad \forall \, 2 \leq
      i \leq r,
   \end{equation}
   where $a_k^{(i)} \in \mathbb{Q}$. (Note that according to our
   notation $a_k = a_k^{(2)}$.)

   At this point we need to separate the arguments to the cases $r
   \geq 3$ and $r=2$. (As we have already remarked, $r=1$ is
   impossible under the assumptions of the theorem.) We assume first
   that $r \geq 3$. The case $r=2$ will be treated after that.
  
   We now perform a little change in the basis and the choice of the
   lift $\overline{p}_i$ as follows:
   \begin{align*}
      & \overline{\alpha}_2 \longrightarrow \overline{\alpha}_2 - a_3
      e_V t^{\nu}, \quad \overline{\alpha}_i \longrightarrow
      \overline{\alpha}_i \quad
      \forall i \geq 3, \\
      & \overline{p}_1 \longrightarrow \overline{p}_1 +
      (a_1-a_3)e_{L_1} t^{\nu}, \quad \overline{p}_2 \longrightarrow
      \overline{p}_2 - (a_2-a_3) e_{L_2} t^{\nu}, \quad \overline{p}_i
      \longrightarrow \overline{p}_i \quad \forall i \geq 3.
   \end{align*}
   To simplify notation we continue to denote the new basis elements
   by $\overline{\alpha}_i$ and similarly for the $\overline{p}_i$'s.
   By abuse of notation we also continue to denote the new
   coefficients $a_k$, $a_k^{(i)}$, $\xi_j$ and $\rho$ resulting from
   the basis change by the same symbols, and similarly for the term $B
   \in \mathcal{B}$. The outcome of the basis change is that now the
   second equality in~\eqref{eq:a2*a2-delta-1} becomes:
   \begin{equation} \label{eq:delta-alpha-2-new}
      \delta(\overline{\alpha}_2) = \overline{p}_1 - \overline{p}_2 +
      \sum_{k=4}^r a_k e_{L_k} t^{\nu}.
   \end{equation}
   (Of course, if $r=3$ then the third term in the last equation is
   void.)  We now use the fact that $\delta$ is multiplicative
   (see~\cite{Bi-Co:cob1}):
   \begin{equation} \label{eq:delta-alpha-2*alpha-2}
      \delta(\overline{\alpha}_2 * \overline{\alpha}_2) =
      \delta(\overline{\alpha}_2) * \delta(\overline{\alpha}_2) =
      \overline{p}_1^{*2} + \overline{p}_2^{*2} + \sum_{k=4}^r a_k^2
      e_{L_k} t^{2 \nu}.
   \end{equation}
   We now express $\overline{p}_1^{*2} \in Q^+H_{-n}(L_1) \cong
   t^{\nu}Q^+H_{0}(L_1)$ in terms of the basis $\{ \overline{p}_1
   t^\nu, e_{L_1}t^{2\nu}\}$ and similarly for $\overline{p}_2^{*2}$:
   $$\overline{p}_1^{*2} = \sigma_1 \overline{p}_1 t^{\nu} +
   \tau_1 e_{L_1}t^{2\nu}, \qquad \overline{p}_2^{*2} = \sigma_2
   \overline{p}_2 t^{\nu} + \tau_2 e_{L_2}t^{2\nu},$$ where $\sigma_1,
   \sigma_2 \in \mathbb{Q}$ and $\tau_1, \tau_2 \in \mathbb{Q}$. (In
   fact, by choosing the $\alpha_i$'s, $\overline{\alpha}_i$'s and
   $\overline{p}_i$'s carefully, over $\mathbb{Z}$, the coefficients
   $\sigma_1, \sigma_2, \tau_1, \tau_2$ will in fact be in
   $\mathbb{Z}$, but we will not need that.)  Substituting this
   into~\eqref{eq:delta-alpha-2*alpha-2} we obtain:
   \begin{equation} \label{eq:delta-alpha-2*alpha-2-II}
      \delta(\overline{\alpha}_2 * \overline{\alpha}_2) = \sigma_1
      \overline{p}_1 t^{\nu} + \sigma_2 \overline{p}_2 t^{\nu} +
      \tau_1 e_{L_1}t^{2\nu} + \tau_2 e_{L_2}t^{2\nu} + \sum_{k=4}^r
      a_k^2 e_{L_k} t^{2 \nu}.
   \end{equation}
   Applying $\delta$ to the first equality in~\eqref{eq:a2*a2-delta-1}
   and using~\eqref{eq:delta-alpha-2-new}
   and~\eqref{eq:delta-alpha-2*alpha-2-II} we obtain:
   \begin{align*}
      & \xi_2\Bigl(\overline{p}_1 - \overline{p}_2 + \sum_{k=4}^r a_k
      e_{L_k} t^{\nu}\Bigr) t^{\nu} + \sum_{i=3}^r \xi_i
      \Bigl(\overline{p}_1 - \overline{p}_i + \sum_{q=1}^r a_q^{(i)}
      e_{L_q} t^{\nu}\Bigr) t^{\nu} + \rho (e_{L_1} + \cdots +
      e_{L_r})t^{2 \nu} \\
      & = \sigma_1 \overline{p}_1 t^{\nu} + \sigma_2 \overline{p}_2
      t^{\nu} + \tau_1 e_{L_1}t^{2\nu} + \tau_2 e_{L_2}t^{2\nu} +
      \sum_{k=4}^r a_k^2 e_{L_k} t^{2 \nu}.
   \end{align*}
   Comparing the coefficients of $\overline{p}_3, \ldots,
   \overline{p}_r$ we deduce that $\xi_3 = \cdots = \xi_r = 0$. The
   last equation thus becomes:
   \begin{equation} \label{eq:coeffs-xi=0}
      \begin{aligned}
         & \xi_2\Bigl(\overline{p}_1 - \overline{p}_2 + \sum_{k=4}^r
         a_k e_{L_k} t^{\nu}\Bigr) t^{\nu} + \rho (e_{L_1} + \cdots +
         e_{L_r})t^{2 \nu} \\
         & = \sigma_1 \overline{p}_1 t^{\nu} + \sigma_2 \overline{p}_2
         t^{\nu} + \tau_1 e_{L_1}t^{2\nu} + \tau_2 e_{L_2}t^{2\nu} +
         \sum_{k=4}^r a_k^2 e_{L_k} t^{2 \nu}.
      \end{aligned}
   \end{equation}
   Comparing the coefficients of $\overline{e}_3$ on both sides
   of~\eqref{eq:coeffs-xi=0} (recall that $r \geq 3$) we deduce that
   $\rho=0$. It easily follows now that $\tau_1 = \tau_2 = 0$ and that
   $\sigma_1 = \xi_2 = -\sigma_2$. By the definition of the
   discriminant it follows that
   $$\Delta_{L_1} = \sigma_1^2 = \sigma_2^2 = \Delta_{L_2}.$$
   Note that the relation between our $\sigma_i$'s and $\tau_i$'s and
   the notation used in~\S\ref{sb:discr-intro} and
   in~\S\ref{sbsb:Q=QH_n} is $\sigma_1 = \sigma_1(p_1,
   \overline{p}_1)$, $\sigma_2 = \sigma_2(p_2, \overline{p}_2)$ and
   similarly for $\tau_1, \tau_2$.  Finally we remark that since
   $\Delta_{L_1} = \sigma_1^2 \in \mathbb{Z}$ we must have $\sigma_1
   \in \mathbb{Z}$, hence $\Delta_{L_1}$ is a perfect square.

   We now turn to the case $r=2$. In that case we can
   write~\eqref{eq:a2*a2-delta-1} as
   \begin{equation} \label{eq:a2*a2-delta-2}
      \begin{aligned} & \overline{\alpha}_2 *
         \overline{\alpha}_2 = \xi \overline{\alpha}_2
         t^{\nu} + Bt^{\nu} + \rho e_V t^{2 \nu}, \\
         & \delta(\overline{\alpha}_2) = \overline{p}_1 - \overline{p}_2 +
         a_1 e_{L_1} t^{\nu} + a_2 e_{L_2} t^{\nu},
      \end{aligned}
   \end{equation}
   By an obvious basis change (among $\overline{p}_1$,
   $\overline{p}_2$) we may assume that $a_1 = a_2 = 0$. Then the
   identity $\delta(\overline{\alpha}_2 * \overline{\alpha}_2) =
   \delta(\overline{\alpha}_2) * \delta(\overline{\alpha}_2)$ becomes:
   $$\xi(\overline{p}_1 - \overline{p}_2)t^{\nu} + 
   \rho(e_{L_1} + e_{L_2})t^{2 \nu} = \sigma_1 \overline{p}_1 t^{\nu}
   + \sigma_2 \overline{p}_2 t^{\nu} + \tau_1 e_{L_1} t^{2\nu} +
   \tau_2 e_{L_2} t^{2\nu}.$$ It follows immediately that
   $\sigma_1=-\sigma_2$ and $\tau_1 = \tau_2$. Consequently
   $\Delta_{L_1} = \Delta_{L_2}$.

   To complete the proof of the theorem it remains to prove
   Proposition~\ref{p:basis-delta}. For this purpose we will need the
   following Lemma.
   \begin{lem} \label{l:div-t} Let $j \geq 0$ and consider the
      connecting homomorphism $$\delta: Q^+H_{1+jN_V}(V, \partial V)
      \longrightarrow Q^+H_{jN_V}(\partial V).$$ Let $\eta \in
      Q^+H_{1+jN_V}(V,
      \partial V)$ and assume that $\delta(\eta)$ is divisible by $t$.
      Then $\eta$ is also divisible by $t$.
   \end{lem}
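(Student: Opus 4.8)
The plan is to translate both the hypothesis and the conclusion into statements about the classical homology of the pair $(V,\partial V)$, using the ``reduction mod~$t$'' map $s$ from the exact sequence~\eqref{eq:ses-s}. The key elementary fact is that, for any of the Lagrangians (or the cobordism, or the pair) occurring here, an element of $Q^+H_*$ is divisible by $t$ if and only if it lies in $\ker s$. Indeed, division by $t$ identifies $t\mathcal{C}^+$ with a degree shift of $\mathcal{C}^+$, so the long exact homology sequence of~\eqref{eq:ses-s} becomes
\[
\cdots \longrightarrow Q^+H_{m+N_V} \xrightarrow{\;\cdot t\;} Q^+H_m \xrightarrow{\;s\;} H_m \longrightarrow Q^+H_{m-1+N_V} \longrightarrow \cdots,
\]
where $\cdot t$ is multiplication by $t$ and $H_m$ denotes the classical $\mathbb{Q}$-homology of the relevant space; exactness at $Q^+H_m$ says exactly $\ker s=\operatorname{image}(\cdot t)$. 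I will apply this both to $\partial V$ in degree $jN_V$ and to the pair $(V,\partial V)$ in degree $1+jN_V$.

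The second ingredient is the compatibility of the quantum connectant $\delta$ with $s$. The short exact sequence of pearl complexes $0\to\mathcal{C}^+(\partial V)\to\mathcal{C}^+(V)\to\mathcal{C}^+(V,\partial V)\to 0$, whose connecting homomorphism is $\delta$, maps under $s$ to the corresponding short exact sequence of Morse complexes, whose connecting homomorphism is the classical $\partial$; naturality of connecting homomorphisms then gives $s\circ\delta=\partial\circ s$ in every degree. This is precisely the commutativity already recorded in diagram~\eqref{eq:diag-QH-H}, now used in degree $1+jN_V$.

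Granting these, the argument is short. Since $\delta(\eta)$ is divisible by $t$, the first fact gives $s(\delta(\eta))=0$ in $H_{jN_V}(\partial V;\mathbb{Q})$, hence $\partial(s(\eta))=0$ by the compatibility above. The classical homology sequence of the pair contains $H_{1+jN_V}(V)\xrightarrow{\;j_*\;}H_{1+jN_V}(V,\partial V)\xrightarrow{\;\partial\;}H_{jN_V}(\partial V)$, and assumption~(2) of Theorem~\ref{t:cob}, namely $H_{1+jN_V}(V;\mathbb{Q})=0$, forces $j_*=0$, so $\partial$ is injective in degree $1+jN_V$. Therefore $s(\eta)=0$, and applying the first fact once more, now to $\eta\in Q^+H_{1+jN_V}(V,\partial V)$, we conclude that $\eta$ is divisible by $t$.

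There is no serious obstacle here; the only care needed is bookkeeping, chiefly checking that the identification $H_*(t\mathcal{C}^+)\cong Q^+H_{*+N_V}$ carries the inclusion $\iota_*$ from~\eqref{eq:ses-s} to multiplication by $t$ (so that lying in $\operatorname{image}\iota_*$ is the same as being divisible by $t$), and that $\delta$ really is the connecting homomorphism of a short exact sequence of pearl complexes compatible, via $s$, with the Morse-theoretic one --- both of which are already implicit in the construction of diagram~\eqref{eq:diag-QH-H}.
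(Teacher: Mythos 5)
Your proof is correct and follows essentially the same route as the paper's: both use the commutativity $s\circ\delta=\partial\circ s$ from the diagram built on~\eqref{eq:ses-s}, the injectivity of the classical $\partial$ in degree $1+jN_V$ forced by $H_{1+jN_V}(V)=0$, and the identification of $\ker s$ with the image of multiplication by $t$ coming from the exact sequence~\eqref{eq:ses-s}. The only difference is presentational (you phrase the last step via the long exact sequence of~\eqref{eq:ses-s}, the paper via the image of $\iota_*$), which is the same fact.
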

   \begin{proof}[Proof of the lemma]
      The connecting homomorphism $\delta$ is part of the following diagram:
      \begin{equation} \label{eq:diag-QH-H-2}
         \begin{CD}
            @. Q^+H_{1+jN_V}(V, \partial V) @ > \delta >>
            Q^+H_{jN_V}(\partial V) \\
            @. @V s  VV @V s VV \\
            H_{1+jN_V}(V) @>j>> H_{1+jN_V}(V, \partial V) @ > \partial >>
            H_{jN_V}(\partial
            V) \\
         \end{CD}
      \end{equation}
      where the vertical $s$-maps are induced by~\eqref{eq:ses-s}.
      Since $\delta(\eta)$ is divisible by $t$ we have $s
      (\delta(\eta))=0$ hence $\partial (s(\eta))=0$. By assumption
      $H_{1+jN_V}(V)=0$ hence the bottom map $\partial$ is injective,
      and therefore we have $s(\eta)=0$. Looking again
      at~\eqref{eq:ses-s} it follows that $$\eta \in
      \textnormal{image\,} \Bigl( H_{1+jN_V}(t\mathcal{C}^+)
      \xrightarrow{\;\; \iota_* \;\;} Q^+H_{1+jN_V}(V,
      \partial V) \Bigr),$$ where $\mathcal{C}^+$ stands for the
      positive pearl complex of $(V,
      \partial V)$. But $$H_{1+jN_V}(t \mathcal{C}^+) \cong
      tQ^+H_{1+(j+1)N_V}(V,\partial V)$$ via an isomorphism for which
      $\iota_*$ becomes the inclusion $$tQ^+H_{1+(j+1)N_V}(V,\partial
      V) \subset Q^+H_{1+jN_V}(V, \partial V).$$ This proves that
      $\eta$ is divisible by $t$.
   \end{proof}
   
   We are finally in position to prove the preceding proposition.
   \begin{proof}[Proof of Proposition~\ref{p:basis-delta}]
      Note that $$\{\overline{p}_1, \delta(\overline{\alpha}_2),
      \ldots, \delta(\overline{\alpha}_r), \delta(e_V)t^{\nu}, e_{L_2}
      t^{\nu}, \ldots, e_{L_r} t^{\nu}\}$$ is a basis for
      $Q^+H_0(\partial V)$ (recall that $\delta(e_V) = e_{L_1} +
      \cdots + e_{L_r}$).  Therefore it is enough to show that the
      subspace of $Q^+H_0(\partial V)$ generated by $\overline{p}_1,
      e_{L_2} t^{\nu}, \ldots, e_{L_r} t^{\nu}$ has trivial
      intersection with $\textnormal{image\,}(\delta)$.

      Let $\gamma = c \overline{p}_1 + \sum_{j=2}^r b_j e_{L_j}
      t^{\nu}$, where $c, b_j \in \mathbb{Q}$ and assume that $\gamma
      = \delta(\beta)$ for some $\beta \in Q^+H_1(V, \partial V)$.  We
      have $s(\gamma) = c p_1$, where the map $s$ is the third
      vertical map from diagram~\eqref{eq:diag-QH-H}. It follows from
      that diagram that $\partial (s(\beta))= c p_1$. But this is
      possible only if $c=0$ since $p_1 \not\in
      \textnormal{image\,}(\partial)$.

      Thus $\gamma = \sum_{j=2}^r b_j e_{L_j} t^{\nu}$ and we have to
      show that $\gamma=0$.  Recall that $\gamma = \delta(\beta)$. We
      claim that $\beta$ is divisible by $t^{\nu}$, i.e. there exists
      $\beta' \in Q^+H_{n+1}(V,\partial V)$ such that $\beta = t^{\nu}
      \beta'$. To prove this we first note that $\gamma$ is divisible
      by $t$. By Lemma~\ref{l:div-t}, $\beta$ is also divisible by
      $t$. Thus there exists $\beta_1 \in Q^+H_{1+N_V}(V, \partial V)$
      with $\beta = t \beta_1$. In particular $\delta(\beta_1) =
      \sum_{j=2}^r b_j e_{L_j} t^{\nu-1}$.  Continuing by induction, using
      Lemma~\ref{l:div-t} repeatedly, we obtain elements $\beta_j \in
      Q^+H_{1+jN_V}(V, \partial V)$ with $t \beta_{j+1} = \beta_j$ for
      every $1 \leq j \leq \nu-1$. Take $\beta' = \beta_{\nu}$.

      It follows that $t^{\nu} \delta(\beta') = \sum_{j=2}^r b_j
      e_{L_j} t^{\nu}$ for some $\beta' \in Q^+H_{n+1}(V, \partial
      V)$. As $Q^+H_{n+1}(V, \partial V) = \mathbb{Q} e_V$ we have
      $\beta' = a e_V$ for some $a \in \mathbb{Q}$. But $\delta(e_V) =
      e_{L_1} + \cdots +e_{L_r}$ hence $a(e_{L_1} + \cdots +
      e_{L_r})t^{\nu} = (\sum_{j=2}^r b_j e_{L_j}) t^{\nu}$. Since by
      condition~(3) of Assumption~$\mathscr{L}$ the element $e_{L_1}
      \in Q^+H_n(\partial V)$ is not torsion (over $\Lambda^+$), it
      follows that $a=0$.  Consequently $b_2 = \cdots = b_r =0$ and so
      $\gamma = 0$. This concludes the proof of
      Proposition~\ref{p:basis-delta}.
   \end{proof}

   Having proved Proposition~\ref{p:basis-delta}, the proof of
   Theorem~\ref{t:cob} is now complete.
\end{proof}

\subsection{Lagrangians intersecting at one point} \label{sb:lag-1-pt}
\cntrsb

We start with a stronger version of Corollary~\ref{c:del_1=del_2}
from~\S\ref{sb:discr-intro}.

\begin{cor} \label{c:L1-L2-1pt} Let $(M, \omega)$ be a monotone
   symplectic manifold. Let $L_1, L_2 \subset M$ be two Lagrangian
   submanifolds that satisfy conditions $(1)$ -- $(3)$ of Assumption
   $\mathscr{L}$ and such that $N_{L_1} = N_{L_2}$. Denote by $N =
   N_{L_i}$ their mutual minimal Maslov number and assume further
   that:
   \begin{enumerate}
     \item $H_{1+jN}(L_1)=H_{1+jN}(L_2)=0$ for every $j$;
     \item $H_{jN-1}(L_1) = H_{jN-1}(L_2)=0$ for every $j$;
     \item either $\pi_1(L_1 \cup L_2) \to \pi_1(M)$ is injective, or
      $\pi_1(L_i) \to \pi_1(M)$ is trivial for $i=1,2$.
   \end{enumerate}
   Finally, suppose that $L_1$ and $L_2$ intersect transversely at
   exactly one point. Then $$\Delta_{L_1} = \Delta_{L_2}$$ and
   moreover this number is a perfect square.
\end{cor}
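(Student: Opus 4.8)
The plan is to realize $L_1$ and $L_2$ as two of the three ends of a connected monotone Lagrangian cobordism and then invoke Theorem~\ref{t:cob} with $r=3$. Since $L_1$ and $L_2$ meet transversely at a single point, Lagrangian (Polterovich) surgery at that point produces a Lagrangian submanifold $L_3 := L_1 \# L_2$ (the smooth connected sum) together with a connected Lagrangian cobordism $V^{n+1} \subset \mathbb{R}^2 \times M$ whose three ends are $L_1$, $L_2$ and $L_3$; see~\S5 of~\cite{Bi-Co:cob1}. As $L_1$ and $L_2$ are both monotone Lagrangians in the monotone manifold $M$ they share the same monotonicity constant, and hypothesis~(3) of the corollary guarantees, via~\cite{Bi-Co:cob1}, that $V$ can be taken monotone with minimal Maslov number $N_V = N$ (the Maslov classes of $V$ are pulled back from $M$, so no new disk class of smaller index appears). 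Moreover, since $L_1,L_2$ are spin and $n$ is even, $V$ is spin and oriented: $V$ deformation retracts onto the singular slice $L_1 \cup_{\mathrm{pt}} L_2 \simeq L_1 \vee L_2$, so the restriction maps $H^i(V;\mathbb{Z}/2) \to H^i(L_1;\mathbb{Z}/2)\oplus H^i(L_2;\mathbb{Z}/2)$ are isomorphisms for $i=1,2$, while $w_i(TV)|_{L_j} = w_i(TL_j)=0$.

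Next I would verify the two homological hypotheses of Theorem~\ref{t:cob}. From $V \simeq L_1 \vee L_2$ we get $H_k(V;\mathbb{Q}) \cong H_k(L_1;\mathbb{Q}) \oplus H_k(L_2;\mathbb{Q})$ for every $k \ge 1$, so hypothesis~(2) of Theorem~\ref{t:cob}, namely $H_{1+jN_V}(V)=0$, is precisely condition~(1) of the corollary. For hypothesis~(1) of Theorem~\ref{t:cob}, $H_{jN_V}(V,\partial V)=0$, I would use Lefschetz duality $H_{jN}(V,\partial V;\mathbb{Q}) \cong H^{\,n+1-jN}(V;\mathbb{Q})$ (valid as $V$ is compact oriented with boundary); since $N \mid n$ we have $n+1-jN = 1 + (\nu-j)N$ with $\nu = n/N$, hence for $1 \le j \le \nu$ this group has the same dimension as $H_{1+(\nu-j)N}(L_1;\mathbb{Q})\oplus H_{1+(\nu-j)N}(L_2;\mathbb{Q})=0$ by condition~(1) of the corollary, while for $j \le 0$ or $j > \nu$ it vanishes for trivial reasons ($H_0(V,\partial V)=0$ because $V$ is connected with non-empty boundary, and there is no $j$ with $jN=n+1$). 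Condition~(2) of the corollary is the Poincar\'e-dual reformulation of~(1) on the closed $L_i$ and takes care of the remaining integral bookkeeping.

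The delicate point is to check that $L_3 = L_1 \# L_2$ itself satisfies conditions~(1)--(3) of Assumption~$\mathscr{L}$, as required to apply Theorem~\ref{t:cob}. Condition~(1) holds because surgery of monotone Lagrangians is monotone and $N_{L_3} \mid n$; condition~(2) holds since a connected sum of oriented spin manifolds is oriented spin. For condition~(3), $\mathrm{rk}\,HF_n(L_3,L_3)=2$, I would first note that the pearl spectral sequences of $L_1$ and $L_2$, together with conditions~(1),(2) of the corollary, force $HF_d(L_i,L_i)=0$ for all $d \equiv \pm 1 \pmod N$ (the relevant $E^1$-terms are exactly the groups $H_{1+mN}(L_i)$ and $H_{mN-1}(L_i)$, which vanish). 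Then the surgery exact triangle $L_3 \simeq \mathrm{Cone}(L_2 \to L_1)$ — equivalently, the Biran--Cornea cone decomposition attached to $V$ — expresses $HF(L_3,L_3)$ through the groups $HF(L_i,L_j)$, $i,j\in\{1,2\}$; since $L_1 \cap L_2$ is a single point, $HF(L_1,L_2)$ and $HF(L_2,L_1)$ each have total rank $\le 1$, and combining this with the vanishing just noted and with $HF_n(L_i,L_i)$ of rank $2$ one pins $\mathrm{rk}\,HF_n(L_3,L_3)$ down to $2$. One also checks $[L_3] = \pm([L_1]+[L_2]) \neq 0$ using $[L_1]\cdot[L_2]=\pm1$. (When $\nu=1$, e.g.\ in the surface examples of~\S\ref{sbsb:intro-exp-blcp2}, condition~(3) for $L_3$ is immediate from Proposition~\ref{p:criterion}, and when $L_1,L_2$ are spheres $L_3$ is again a sphere, recovering Corollary~\ref{c:del_1=del_2}.)

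With all hypotheses in place, Theorem~\ref{t:cob} applied to $V$ with its three ends $L_1,L_2,L_3$ gives $\Delta_{L_1} = \Delta_{L_2} = \Delta_{L_3}$, and since $r = 3 \ge 3$ it further yields that each $\Delta_{L_i}$ is a perfect square; in particular $\Delta_{L_1}=\Delta_{L_2}$ is a perfect square, as claimed. I expect the main obstacle to be the verification of condition~(3) of Assumption~$\mathscr{L}$ for the auxiliary end $L_3$: its middle-degree homology $H_{jN}(L_3)\cong H_{jN}(L_1)\oplus H_{jN}(L_2)$ is not controlled by the hypotheses of the corollary, so Proposition~\ref{p:criterion} does not apply to $L_3$ directly and one genuinely needs the surgery exact triangle (equivalently, the cobordism $V$) to compute $HF_n(L_3,L_3)$.
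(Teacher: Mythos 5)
Your proposal follows the paper's proof: Proposition~\ref{p:surgery} produces exactly the three-ended surgery cobordism $V$ you describe (monotone with $N_V=N$ thanks to hypothesis~(3), spin, and homotopy equivalent to $L_1\vee L_2$), the two homological hypotheses of Theorem~\ref{t:cob} are then checked from the wedge decomposition together with conditions~(1)--(2) (your Lefschetz-duality computation of $H_{jN}(V,\partial V)$ is a perfectly good way to carry out the paper's unstated ``simple calculation''), and the conclusion is Theorem~\ref{t:cob} with $r=3$.

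The only substantive divergence is your third paragraph. You are right that, as stated, Theorem~\ref{t:cob} requires \emph{every} end --- including $L_3=L_1\#L_2$ --- to satisfy conditions~(1)--(3) of Assumption~$\mathscr{L}$, and right that Proposition~\ref{p:criterion} does not apply to $L_3$ in general, because $H_{jN}(L_3)\cong H_{jN}(L_1)\oplus H_{jN}(L_2)$ is not assumed to vanish. The paper does not address this point at all: it simply invokes Theorem~\ref{t:cob}. Your proposed repair, however, is only a sketch. The cone decomposition gives an upper bound for $\mathrm{rk}\,HF_n(L_3,L_3)$ equal to the sum of the degree-$n$ contributions of $HF(L_1,L_1)$, $HF(L_2,L_2)$, $HF(L_1,L_2)$ and $HF(L_2,L_1)$, which is strictly larger than $2$ before differentials, and nothing in your argument controls the connecting maps well enough to force the rank down to exactly $2$. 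So either this step must be completed, or one should note that the proof of Theorem~\ref{t:cob} (in particular the basis for $Q^+H_0(\partial V)$ and Proposition~\ref{p:basis-delta}) genuinely uses the rank-two structure at every end and rework it so that only the two ends being compared need satisfy condition~(3). In the sphere case of Corollary~\ref{c:del_1=del_2} the issue evaporates, since $L_1\#L_2$ is again a sphere whose class $[L_1]+[L_2]$ has nonzero self-intersection, so Proposition~\ref{p:criterion} applies to it directly.
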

Note that if $L_1$, $L_2$ are even dimensional Lagrangian spheres then
conditions~(1)~--~(3) of Corollary~\ref{c:L1-L2-1pt} are obviously
satisfied, hence Corollary~\ref{c:del_1=del_2} follows from
Corollary~\ref{c:L1-L2-1pt}.

We now turn to the proof of Corollary~\ref{c:L1-L2-1pt}. We will need
the following Proposition.

\begin{prop} \label{p:surgery} Let $L_1, L_2 \subset (M, \omega)$ be
   two Lagrangian submanifolds intersecting transversely at one point.
   Then there exists a Lagrangian cobordism $V \subset \mathbb{R}^2
   \times M$ with three ends, corresponding to $L_1$, $L_2$ and $L_1
   \# L_2$ and such that $V$ has the homotopy type of $L_1 \vee L_2$.
   If $L_1$ and $L_2$ are monotone with the same minimal Maslov number
   $N$ and they satisfy assumption (3) from
   Corollary~\ref{c:L1-L2-1pt} then $V$ is also monotone with minimal
   Maslov number $N_V = N$.  Moreover, if $L_1$ and $L_2$ are spin
   then $V$ admits a spin structure that extends those of $L_1$ and
   $L_2$.
\end{prop}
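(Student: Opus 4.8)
The plan is to realize $V$ as the trace of Polterovich's Lagrangian surgery performed at the single transverse intersection point $p = L_1 \cap L_2$ (cf.\ \cite{Bi-Co:cob1}). Near $p$ one has a Darboux chart in which $(L_1,L_2)$ looks like $(\mathbb{R}^n, i\mathbb{R}^n) \subset \mathbb{C}^n$, and Polterovich's Lagrangian handle, glued into $(L_1 \setminus p) \cup (L_2 \setminus p)$, produces the embedded Lagrangian $L_1 \# L_2$. To turn this into a cobordism I would use the standard surgery–trace construction: it yields a Lagrangian $V \subset \mathbb{R}^2 \times M$ which is cylindrical at infinity, with ends $L_1$ and $L_2$ (placed at two distinct heights on one side) and a single end $L_1 \# L_2$ on the other. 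Thus $V$ is a Lagrangian cobordism from $L_1 \sqcup L_2$ to $L_1 \# L_2$ in the sense of \cite{Bi-Co:cob1}, with three ends.

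For the homotopy type: away from the cylindrical ends, $V$ is diffeomorphic to the manifold obtained from $[0,1] \times (L_1 \sqcup L_2)$ by attaching a single $1$-handle $D^1 \times D^n$ along $\partial D^1 \times D^n = B_1 \sqcup B_2$, where $B_i \subset \{1\} \times L_i$ is a small ball around $p$. Such a manifold deformation retracts onto $\bigl(\{0\} \times (L_1 \sqcup L_2)\bigr) \cup (D^1 \times \{0\})$, i.e.\ onto $L_1$ and $L_2$ joined by an arc, which is homotopy equivalent to $L_1 \vee L_2$. In particular $\pi_1(V) \cong \pi_1(L_1) * \pi_1(L_2)$ and $H_2(V) \cong H_2(L_1) \oplus H_2(L_2)$.

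For monotonicity I would argue as follows. The inclusions $L_i \hookrightarrow V$ (as ends, equivalently as the wedge summands) induce maps $H_2^D(M,L_i) \to H_2^D(\mathbb{R}^2 \times M, V)$ preserving both the Maslov index and the $\omega$-area (the $\mathbb{R}^2$-factor contributes nothing, as an end of $V$ sits over a point of $\mathbb{R}^2$). Using $V \simeq L_1 \vee L_2$, van Kampen's theorem, and the long exact homotopy sequence of the pair $(\mathbb{R}^2 \times M, V)$, hypothesis (3) is precisely what ensures that $H_2^D(\mathbb{R}^2 \times M, V)$ is generated by the images of $H_2^D(M,L_1)$, $H_2^D(M,L_2)$ and the spherical classes $H_2^S(M)$: in case (3a) every disk has boundary null-homotopic in $V$, hence caps off to a sphere in $M$; in case (3b) every boundary loop already bounds a disk on one of the $L_i$, and subtracting it leaves a spherical class. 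Since $L_1,L_2$ are monotone with the same constant $\rho$, and (recalling \S\ref{sb:monotone}) the monotonicity of $M$ carries the same constant, the relation $A_\omega = \rho\mu$ passes from these generators to all of $H_2^D(\mathbb{R}^2 \times M, V)$, so $V$ is monotone with constant $\rho$. Then $N_V = N$: the inclusions $L_i \hookrightarrow V$ give $N_V \mid N$, while all the generating classes have Maslov index in $N\mathbb{Z}$ (the disks on $L_i$ since $N_{L_i}=N$, the spheres since $N \mid 2C_M$), so $N \mid N_V$ as well. I expect this monotonicity and minimal-Maslov bookkeeping — pinning down that condition (3) suffices to generate $H_2^D(\mathbb{R}^2 \times M, V)$ — to be the main point; the construction itself is the standard surgery trace.

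Finally, for the spin structure: since $V$ retracts onto $L_1 \vee L_2$, one has $H^2(V;\mathbb{Z}/2) \cong H^2(L_1;\mathbb{Z}/2) \oplus H^2(L_2;\mathbb{Z}/2)$, so $w_2(V) = 0$ because $L_1$ and $L_2$ are spin; and as the $1$-handle region is simply connected the spin structure there is unique, so it can be arranged to restrict to the prescribed spin structures on $L_1$ and $L_2$, giving a spin structure on $V$ extending those of the ends (the same computation also equips $L_1 \# L_2$ with an induced spin structure). This completes the plan for Proposition~\ref{p:surgery}.
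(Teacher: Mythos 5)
Your proposal is correct and follows essentially the same route as the paper: the cobordism is produced by the Polterovich surgery adapted to cobordisms from~\cite{Bi-Co:cob1}, its homotopy type is identified by recognizing $V$ (up to the cylindrical ends) as the boundary connected sum of $[0,1] \times L_1$ and $[0,1] \times L_2$ — your $1$-handle description is the same thing — which retracts onto $L_1 \vee L_2$, monotonicity is deduced via Seifert--Van Kampen from hypothesis~(3), and the spin structure is extended from the ends. The only cosmetic difference is that the paper obtains the spin structure by invoking that a connected sum of spin manifolds is spin, where you argue by obstruction theory on $V \simeq L_1 \vee L_2$; both are fine.
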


Before proving this proposition we show how to deduce
Corollary~\ref{c:L1-L2-1pt} from it.

\begin{proof}[Proof of Corollary~\ref{c:L1-L2-1pt}]
   Consider the Lagrangian cobordism provided by
   Proposition~\ref{p:surgery}. Since $V$ is homotopy equivalent to
   $L_1 \vee L_2$ and $L_i$ satisfy assumptions (1) and~(2) of
   Corollary~\ref{c:L1-L2-1pt} a simple calculation shows that
   $$H_{jN}(V, \partial V) = 0, \quad H_{1+jN}(V) = 0, \quad \forall j.$$
   The result now follows immediately from Theorem~\ref{t:cob}.
\end{proof}

We now turn to the proof of the Proposition.

\begin{proof}[Proof of Proposition~\ref{p:surgery}]
   The proof is based on a version of the Polterovich Lagrangian
   surgery~\cite{Po:surgery} adapted to the case of
   cobordisms~\cite{Bi-Co:cob1}.  We briefly outline those parts of
   the construction that are relevant here. More details can be found
   in~\cite{Bi-Co:cob1}.

   Consider two plane curves $\gamma_1, \gamma_2$ as in
   Figure~\ref{f:gamma-12}.
   \begin{figure}[htbp]
      \begin{center} 
         \epsfig{file=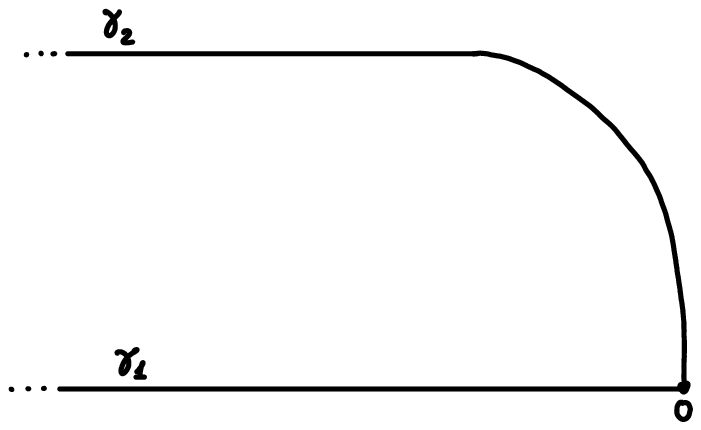, width=0.5\linewidth}
      \end{center}
      \caption{\label{f:gamma-12}}
   \end{figure}
   Consider the Lagrangian submanifolds $\gamma_1 \times L_1, \gamma_2
   \times L_2 \subset \mathbb{R}^2 \times M$. The surgery construction
   from~\cite{Bi-Co:cob1} produces a Lagrangian cobordism $V \subset
   \mathbb{R} \times M$ with two negative ends which coincide with
   negative ends of $\gamma_i \times L_i$ and with whose positive end
   looks like the positive end of $\gamma_3 \times (L_1 \# L_2)$,
   where the curve $\gamma_3$ is depicted in Figure~\ref{f:pi-V} and
   $L_1 \# L_2$ stands for the Polterovich surgery (in $M$) of $L_1$
   and $L_2$ (which coincides with the connected sum of the $L_i$'s
   because they intersect transversely at exactly one point). The
   projection of $V$ to $\mathbb{R}^2$ is depicted in
   Figure~\ref{f:pi-V}.
   \begin{figure}[htbp] 
      \begin{center} 
         \epsfig{file=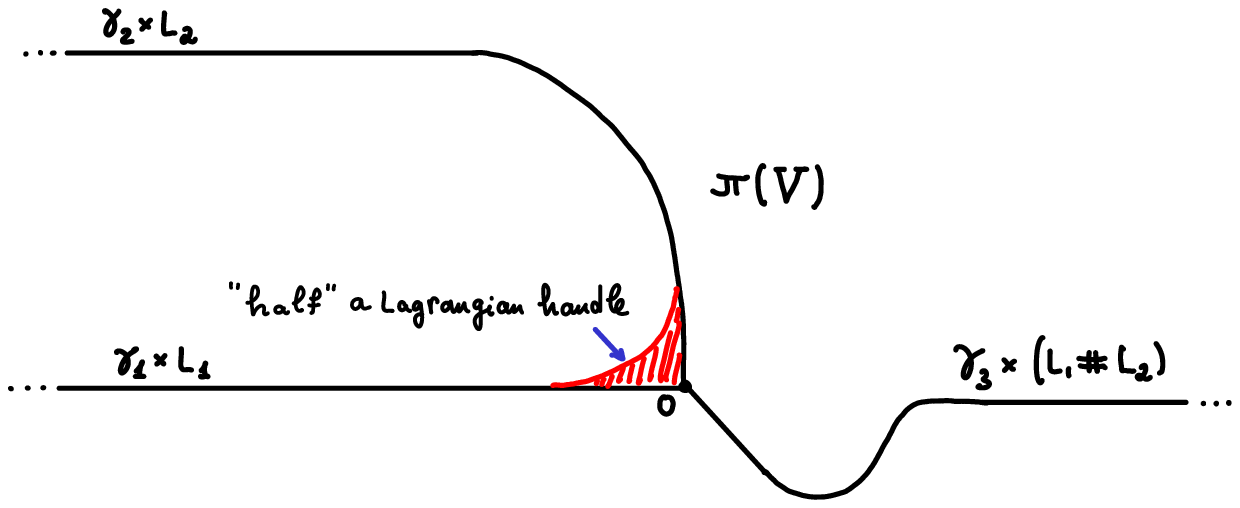, width=0.85\linewidth}
      \end{center}
      \caption{\label{f:pi-V}}
   \end{figure}

   Next we determine the topology of $V$.  Consider the curves
   $\widetilde{\gamma}_1, \widetilde{\gamma}_2$ (which are extensions
   of the $\gamma_i$'s to curves with positive ends as in
   Figure~\ref{f:gamma-tilde-12}.)
   \begin{figure}[htbp]
      \begin{center} 
         \epsfig{file=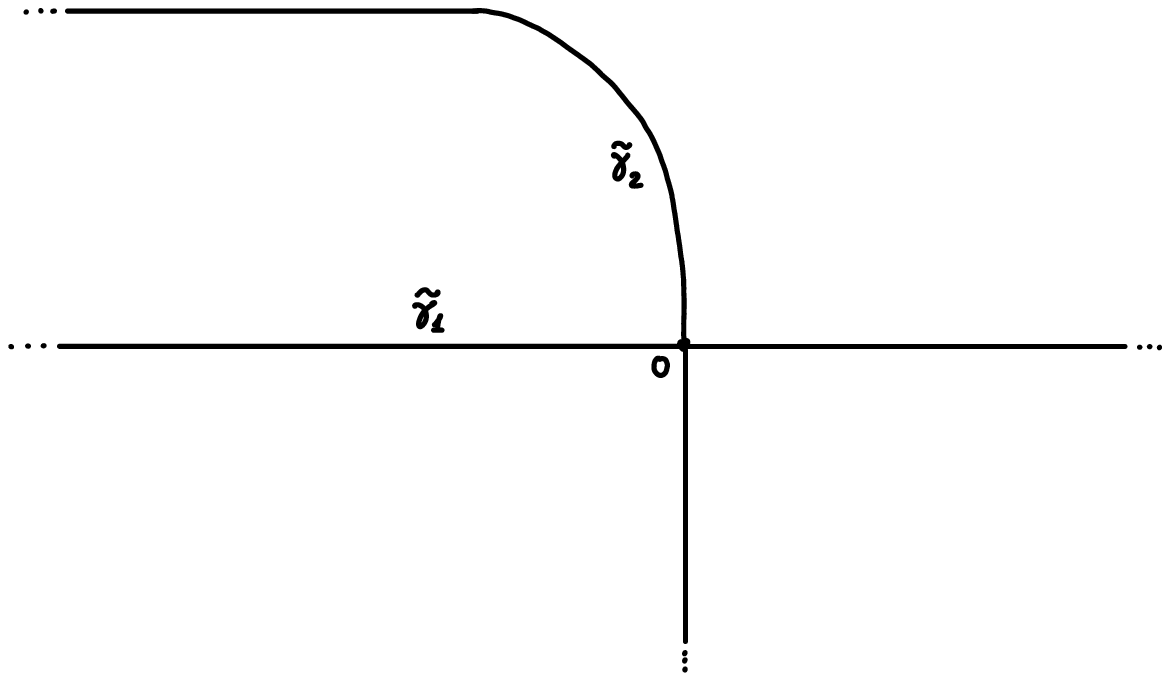, width=0.7\linewidth}
      \end{center}
      \caption{\label{f:gamma-tilde-12}}
   \end{figure}
   Consider the Polterovich surgery $W = (\widetilde{\gamma}_1 \times
   L_1) \# (\widetilde{\gamma}_2 \times L_2) \subset \mathbb{R}^2
   \times M$ (note that the latter two Lagrangians also intersect
   transversely at a single point). See Figure~\ref{f:cob-W}.
   \begin{figure}[htbp]
      \begin{center} 
         \epsfig{file=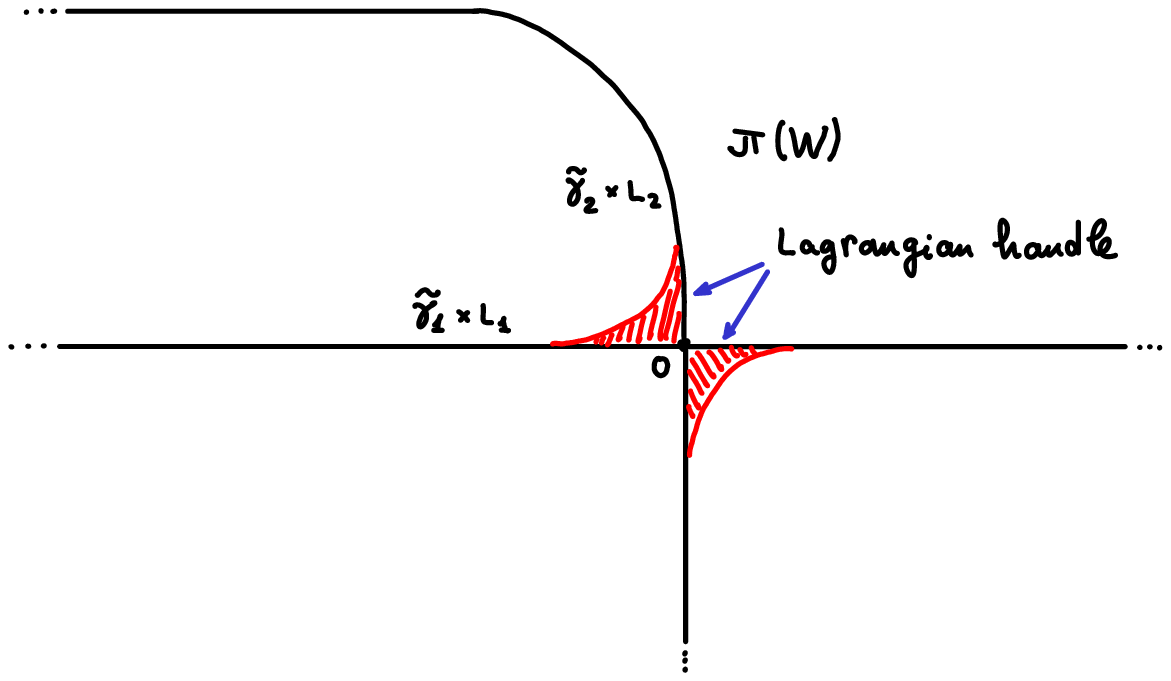, width=0.85\linewidth}
      \end{center}
      \caption{\label{f:cob-W}}
   \end{figure}
   \begin{figure}[htbp]
      \begin{center} 
         \epsfig{file=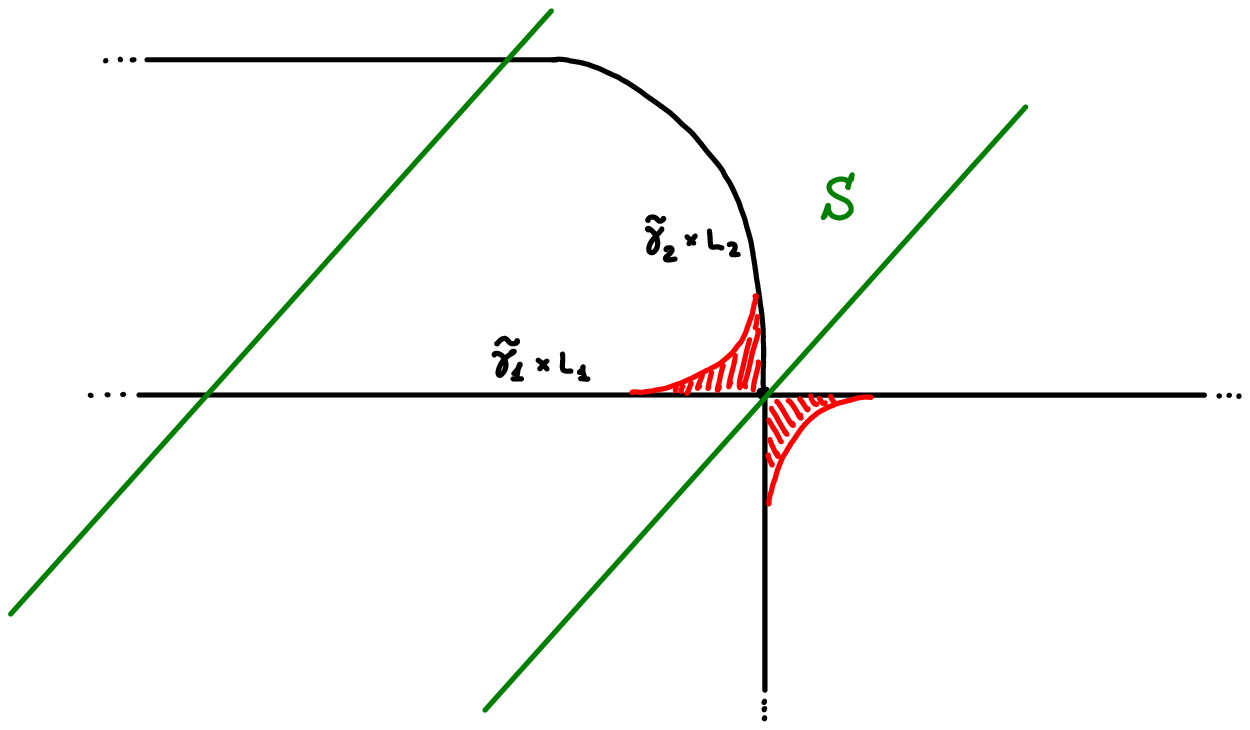, width=0.85\linewidth}
      \end{center}
      \caption{\label{f:V_0-strip-S}}
   \end{figure}

   Denote by $\pi: \mathbb{R}^2 \times M \longrightarrow \mathbb{R}^2$
   the projection, and by $S \subset \mathbb{R}^2$ the strip depicted
   in Figure~\ref{f:V_0-strip-S}. Put $V_0 = W \cap \pi^{-1}(S)$.
   According to~\cite{Bi-Co:cob1}, $V_0$ is a manifold with boundary,
   with two obvious boundary components corresponding to the $L_i$'s
   and a third boundary component which is $W \cap \pi^{-1}(0)$. The
   latter is exactly the Polterovich surgery $L_1 \# L_2$.  Moreover
   $V_0$ is homotopy equivalent to $V$ (in fact $V_0 \subset V$ and is
   a deformation retract of $V$). A straightforward calculation shows
   that there is an embedding $L_1 \vee L_2 \subset V_0$ and moreover
   that $L_1 \vee L_2$ is a deformation retract of $V_0$. (In fact,
   one can show that $V_0$ is diffeomorphic to the boundary connected
   sum of $[0,1] \times L_1$ and $[0,1] \times L_2$, where the
   connected sum occurs among the boundary components $\{1\} \times
   L_i$, $i=1,2$.)
   
   The statement on monotonicity follows from the Seifert - Van~Kampen
   theorem (see also~\cite{Bi-Co:cob1}).

   Assume now that $L_1, L_2$ are spin. Then $\widetilde{\gamma}_1
   \times L_1$ and $\widetilde{\gamma}_2 \times L_2$ are also spin,
   with a spin structure extending those of the ends. Recall that the
   connected sum of spin manifolds is also
   spin~\cite{La-Mi:spin-geom}. Thus $W = (\widetilde{\gamma}_1 \times
   L_1) \# (\widetilde{\gamma}_2 \times L_2)$ is spin too and by
   standard arguments it follows that the spin structure on $W$ can be
   chosen so that it extends those given on the ends. By restriction
   we obtain a spin structure on $V_0 \subset W$ and consequently also
   the desired one on $V$.
\end{proof}




\section{Examples} \label{s:examples} \cntrs 

This section is a continuation of~\S\ref{sb:exps} in which we provide
more details to the examples. We will work here with the following
setting. $(M, \omega)$ will be a monotone symplectic manifold with
minimal Chern number $C_M$. To keep the notation short we will denote
here by $QH(M)$ the quantum homology of $M$ with coefficients in the
ring $R=\mathbb{Z}[q^{-1}, q]$ (with $|q|=-2$), instead of writing
$QH(M;R)$.

\subsection{Lagrangian spheres in symplectic blow-ups of
  $\mathbb{C}P^2$} \label{sb:lag-spheres-blow-ups} \cntrsb

Denote as in~\S\ref{sbsb:intro-exp-blcp2} by $M_k$ the blow-up of
${\mathbb{C}}P^2$ at $k \leq 6$ points endowed with a K\"{a}hler
symplectic structure $\omega_k$ in the cohomology class of $c_1 \in
H^2(M_k)$.  Note that $-K_{M_k}$ is ample hence $c_1$ represents a
K\"{a}hler class. Note that $C_{M_k}=1$. As will be seen
in~\S\ref{s:non-monotone} some of our results (e.g.
Theorem~\ref{t:cubic-eq-sphere}) continue to hold in dimension $4$
also for non-monotone Lagrangian spheres. In this section however we
still stick to the monotone case.

We first claim that the set of classes in $H_2(M_k)$ which are
represented by Lagrangian spheres are precisely those that appear in
Table~\ref{tb:classes-intro}. This is well known and there are many
ways to prove it (see e.g.~\cite{Se:lect-4-Dehn, Ev:lag-spheres,
  Li-Wu:lag-spheres, Shev:secondary-stw}). For the classes $A = E_i -
E_j \in H_2(M_k)$ when $k=2$ and $k=3$ it is easy to find Lagrangian
spheres in the class $A$ by an explicit construction which we outline
below (see~\cite{Ev:lag-spheres} for more details). For $k \geq 4$, as
well as $k=3$ with $A = H - E_1 - E_2 - E_3$, it seems less trivial to
perform explicit constructions and one could appeal instead to less
transparent methods such as (relative) inflation, as in
in~\cite{Li-Wu:lag-spheres, Shev:secondary-stw} (we will briefly
outline this in a special case below). Another approach which works
for some of the $k$'s is to realize $M_k$ as a fiber in a Lefschetz
pencil and obtain the Lagrangian spheres as vanishing cycles (e.g.
$M_6$ is the cubic surface in ${\mathbb{C}}P^3$ and $M_5$ is a
complete intersection of two quadrics in ${\mathbb{C}}P^4$). Yet
another approach comes from real algebraic geometry, where one can
obtain Lagrangian spheres in some of the $M_k$'s as a component of the
fixed point set of an anti-symplectic involution. This works for $k=5,
6$ and all classes $A$, and for $k=3$ with $A = E_i-E_j$.
See~\cite{Koll:real-alg-surf} for more details. Finally note that for
$2 \leq k \leq 8$, $k \neq 3$, the group of symplectomorphisms of
$M_k$ acts transitively on the set of classes that can be represented
by Lagrangian spheres~\cite{Demazure:del-pezzo-1, Li-Wu:lag-spheres},
hence it is enough to construct one Lagrangian sphere in each $M_k$.
(This also explains why the invariants in Table~\ref{tb:classes-intro}
coincide for different classes within each of the $M_k$'s with the
exception $k=3$.)

Despite the many ways to establish Lagrangian spheres in the $M_k$'s,
the shortest (albeit not the most explicit) path to this end is to
appeal to the work Li-Wu~\cite{Li-Wu:lag-spheres}. According
to~\cite{Li-Wu:lag-spheres} a homology class $A \in H_2(M_k)$ can be
represented by a Lagrangian sphere iff it satisfies the following two
conditions:
\begin{enumerate}
  \item[(LS-1)] $A$ can be represented by a smooth embedded $2$-sphere.
  \item[(LS-2)] $\langle [\omega_k], A \rangle = 0$.
  \item[(LS-3)] $A \cdot A  = -2$.
\end{enumerate}
We remark again that we have assumed that $[\omega_k] = c_1$
(otherwise one has to assume in addition that $\langle c_1, A \rangle
= 0$).

It is straightforward to see that all the classes in
Table~\ref{tb:classes-intro} satisfy conditions~(LS-2) and~(LS-3)
above. As for condition~(LS-1), note that if $C', C'' \subset M^4$ are
two {\em disjoint} embedded smooth $2$-spheres in a $4$-manifold
$M^4$, then by performing the connected sum operation one obtains a new smooth
embedded $2$-sphere in the class $[C']+[C'']$. From this it follows
that any non-trivial class of the form $\sum_{i=1}^k \epsilon_i E_i$
with $\epsilon_i \in \{-1, 0, 1\}$ can be represented by a smooth
embedded $2$-sphere. This settles the cases $\pm(E_i - E_j)$. For the
other type of classes, note that $H$ and $2H$ can both be represented
by smooth embedded $2$-spheres (e.g. a projective line and a conic
respectively) hence the same holds also for for classes of the form
$\pm(H - E_i - E_j - E_l)$ and $\pm(2H - \sum_{i=1}^6 E_i)$.

We remark that in fact there are no other classes but the ones in
Table~\ref{tb:classes-intro} that can be represented by Lagrangian
spheres in $M_k$. This can be proved by elementary means using
conditions~(LS-2) and~(LS-3) above.

\subsubsection{Construction of Lagrangian spheres in $M_2$ and $M_3$}
\label{sbsb:constructions}
We now outline a more explicit way to construct Lagrangian spheres in
some of the $M_k$'s (c.f.~\cite{Ev:lag-spheres}). Consider $Q =
\mathbb{C}P^1 \times \mathbb{C}P^1$ endowed with the symplectic form
$\omega = 2\omega_{\mathbb{C}P^1} \oplus 2\omega_{\mathbb{C}P^1}$,
where $\omega_{\mathbb{C}P^1}$ is the standard K\"{a}hler form on
$\mathbb{C}P^1$ normalized so that $\mathbb{C}P^1$ has area $1$. Note
that the first Chern class of $Q$ satisfies $c_1 = [\omega]$.  The
symplectic manifold $Q$ contains a Lagrangian sphere
$\overline{\Delta}$ in the class $[\mathbb{C}P^1 \times
\textnormal{pt}] - [\textnormal{pt} \times \mathbb{C}P^1]$ (i.e. the
class of the anti-diagonal). For example, we can write
$\overline{\Delta}$ as the graph of the antipodal map, given in
homogeneous coordinates by
\[
\mathbb{C}P^1 \longrightarrow \mathbb{C}P^1, \qquad [z_0 : z_1]
\longmapsto [-\overline{z_1} : \overline{z_0}].
\]
Next, we claim that $Q$ admits a symplectic embedding of two disjoint
closed balls $B_1, B_2$ of capacity $1$ whose images are disjoint from
$\overline{\Delta}$. This can be easily seen from the toric picture.
Indeed the image of the moment map of $Q$ is the square $[0,2] \times
[0,2]$ and the image of $\overline{\Delta}$ under that map is given by
the anti-diagonal $\{(x, y) \mid x,y \in [0,2], x+y=2\}$. By standard
arguments in toric geometry we can symplectically embed in $Q$ a ball
$B_1$ of capacity $1$ whose image under the moment map is $\{(x,y)
\mid x,y \in [0,2], x+y \leq 1\}$. Similarly we can embed another ball
$B_2$ whose image is $\{(x,y) \mid x,y \in [0,2], x+y \geq 3\}$.
Clearly $B_1$, $B_2$ and $\overline{\Delta}$ are mutually disjoint.
Denote by $\widetilde{Q}_1$ the blow-up of $Q$ with respect to $B_1$
and by $\widetilde{Q}_2$ the blow-up of $Q$ with respect to both balls
$B_1$ and $B_2$. It is well known that $\widetilde{Q}_1$ is
symplectomorphic to $M_2$ via a symplectomorphism that sends the class
$\overline{\Delta}$ to $E_1 - E_2$. And $\widetilde{Q}_2$ is
symplectomorphic to $M_3$ by a similar symplectomorphism.  It follows
that $E_1 - E_2$ represents Lagrangian spheres both in $M_2$ and in
$M_3$. Construction of Lagrangian spheres in the other classes of the
type $E_i - E_j$ in $M_3$ can be done in a similar way.

\subsubsection*{Lagrangian spheres in the class $H-E_1-E_2-E_3$ in $M_3$}
We start with the complex blow-up of ${\mathbb{C}}P^2$ at three points
that {\em lie on the same projective line}.  Denote by $E_i$ the
exceptional divisors over the blown-up points. The result of the blow
up is a complex algebraic surface $X$ which contains an embedded
holomorphic rational curve $\Sigma$ in the class $H - E_1 - E_2 -
E_3$. Note also that there are three embedded holomorphic curves $C_i
\subset X$, $i=1,2,3$, in the classes $[C_i] = H - E_i$.  Since $[C_i]
\cdot [\Sigma] = 0$ the curves $C_i$ are disjoint from $\Sigma$. Pick
a K\"{a}hler symplectic structure $\omega_0$ on $X$.  After a suitable
normalization we can write $[\omega_0] = h - \lambda_1 e_1 - \lambda_2
e_2 - \lambda_3 e_3$, where $h, e_1, e_2, e_3$ are the Poincar\'{e}
duals to $H, E_1, E_2, E_3$ respectively.  It is easy to check that
$\lambda_i \geq 0$ and that $\lambda_1 + \lambda_2 + \lambda_3 < 1$.
We now change $\omega_0$ to a new symplectic form $\omega'$ such that:
\begin{enumerate}
  \item $\omega'$ coincides with $\omega_0$ outside a small
   neighborhood $\mathcal{U}$ of $\Sigma$, where $\mathcal{U}$ is
   disjoint from the curves $C_1, C_2, C_3$.
  \item $\omega'|_{T(\Sigma)} \equiv 0$, i.e. $\Sigma$ becomes a
   Lagrangian sphere with respect to $\omega'$.
  \item $\omega'$ and $\omega$ are in the same deformation class of
   symplectic forms on $X$ (i.e. they can be connected by a path of
   symplectic forms).
\end{enumerate}
This can be achieved for example using the {\em deflation}
procedure~\cite{Shev:secondary-stw} (see also~\cite{Li-Ush:neg-sq}).
Alternatively, one can construct $\omega'$ using Gompf fiber-sum
surgery~\cite{Go:fiber-sum-surgery} with respect to $\Sigma \subset X$
and the diagonal in $\mathbb{C}P^1 \times \mathbb{C}P^1$: $$(Y,
\omega'') = (X, \omega_0) \, _\Sigma\#_{\textnormal{diag}} \,
(\mathbb{C}P^1 \times \mathbb{C}P^1, a \omega_{\mathbb{C}P^1} \oplus a
\omega_{\mathbb{C}P^1}),$$ where $a = \tfrac{1}{2} \int_{\Sigma}
\omega_0$, and $S^2$ is symplectically embedded in $X$ as $\Sigma$ and
in $\mathbb{C}P^1 \times \mathbb{C}P^1$ as the diagonal. Since the
anti-diagonal $\overline{\Delta}$ is a Lagrangian sphere in
$\mathbb{C}P^1 \times \mathbb{C}P^1$ which is disjoint from the
diagonal it gives rise to a Lagrangian sphere $L'' \subset Y$.
Finally observe that the surgery has not changed the diffeomorphism
type of $X$, namely there exists a diffeomorphism $\phi: Y
\longrightarrow X$ and moreover $\phi$ can be chosen in such a way
that $\phi(L'')=\Sigma$. Take now $\omega' = \phi_* \omega''$. To
obtain a symplectic deformation between $\omega'$ and $\omega_0$ one
can perform the preceding surgery in a suitable one-parametric family,
where the symplectic form on $\mathbb{C}P^1 \times \mathbb{C}P^1$ is
rescaled so that the area of one of the factors becomes smaller and
smaller and the area of the other increases so that the area of the
diagonal stays constant.

Having replaced the form $\omega_0$ by $\omega'$ we have a Lagrangian
sphere in the desired homology class $H-E_1-E_2-E_3$ but the form
$\omega'$ might not be in the cohomology class of $c_1$. We will now
correct that using inflation.

After a normalization we can assume that $[\omega'] = h - \lambda'_1
e_1 - \lambda'_2 e_2 - \lambda'_3 e_3$. Since $\Sigma$ is Lagrangian
with respect to $\omega'$ we have $\lambda'_1 + \lambda'_2 +
\lambda'_3 = 1$. Recall also that the surfaces $C_1, C_2, C_3$ are
symplectic with respect to $\omega'$, hence $\lambda'_i < 1$ for
every $i$. Moreover, by construction, the surfaces $C_1, C_2, C_3$ can
be made simultaneously $J$-holomorphic for some $\omega'$-compatible
almost complex structure $J$. Since the $C_i$'s are disjoint from
$\Sigma$ we can find neighborhoods $U_i$ of $C_i$ such that the
$U_i$'s are disjoint from $\Sigma$. We now perform inflation
simultaneously along the three surfaces $C_1, C_2, C_3$. More
specifically, by the results of~\cite{Bi:connected-pack,
  Bi:connected-pack-arxiv} there exist closed $2$-forms $\rho_i$
supported in $U_i$, representing the Poincar\'{e} dual of $[C_i]$
(i.e. $[\rho_i] = h-e_i$) and such that the $2$-form
$$\omega_{t_1, t_2, t_3} = \omega' + t_1
\rho_1 + t_2 \rho_2 + t_3 \rho_3$$ is symplectic for every $t_1, t_2,
t_3 \geq 0$. See Lemma~2.1 in~\cite{Bi:connected-pack} and
Proposition~4.3 in~\cite{Bi:connected-pack-arxiv} (see
also~\cite{La:isotopy-balls, LM:classif-1, LM:classif-2, McD:From,
  McD-Op:sing-inf}.) The cohomology class of $\omega'_t$ is:
$$[\omega'_t] = (1 + t_1 + t_2 + t_3)h - (\lambda'_1+t_1)e_1 - 
(\lambda'_2+t_2)e_2 - (\lambda'_3 + t_3)e_3.$$ Choosing $t^0_i =
1-\lambda'_i$ we have $t^0_i>0$ and $1+t_1^0 + t_2^0 + t_3^0 = 4 -
(\lambda'_1 + \lambda'_2 + \lambda'_3) = 3$, hence:
$$[\omega'_{t^0_1, t_2^0, t_3^0}] = 3h - e_1 - e_2 - e_3 = c_1.$$
Due to the support of the forms $\rho_i$ the surface $\Sigma$ remains
Lagrangian for $\omega'_{t^0_1, t^0_2, t^0_3}$. Finally note that
$\omega'_{t^0_1, t^0_2, t^0_3}$ is in the same symplectic deformation
class of $\omega_0$ hence by standard results $(X, \omega'_{t^0_1,
  t^0_2, t^0_3})$ is symplectomorphic to $M_3$.

\subsubsection{Calculation of the discriminant for $M_k$, $2 \leq k
  \leq 6$}

We now give more details on the calculation of the discriminant
$\Delta_L$ for each of the examples in Table~\ref{tb:classes-intro}.
In what follows, for a symplectic manifold $M$, we denote by $p \in
H_0(M)$ the homology class of a point. As before we write $QH(M)$ for
the quantum homology ring of $M$ with coefficients in $R =
\mathbb{Z}[q^{-1}, q]$ where $|q| = -2$. The calculations below make
use of the ``multiplication table'' of the quantum homology of the
$M_k$'s which can be found in~\cite{Cra-Mir:QH}.

Recall that for $M_k$ with $4 \leq k \leq 6$ the group of
symplectomorphisms of $M_k$ acts transitively on the set of classes
that can be represented by Lagrangian
spheres~\cite{Demazure:del-pezzo-1, Li-Wu:lag-spheres}. Therefore, for
$k \geq 4$ we will perform explicit calculations only for Lagrangians
in the class $E_1 - E_2$.

Before we go on we remark that all the calculations for the $M_k$'s
below extend without any change in case we endow $M_k$ with a
non-monotone symplectic structure (provided that a Lagrangian sphere
in the respective class still exists). This is special to dimension
$4$ and is explained in detail in~\S\ref{s:non-monotone}.

\subsubsection{2-point blow-up of $\mathbb{C}P^2$} \label{sbsb:M_2}
\cntrsb $QH(M_2)$ has the following ring structure:
\begin{align*}
& p * p  =  H  q^3 + [M_2]q^4 \\
& p * H  =  (H - E_1) q^2 + (H - E_2)  q^2 + [M_2] q^3 \\
& p * E_i  =  (H - E_i) q^2  \\
& H * H  =  p + (H - E_1 - E_2) q + 2[M_2] q^2 \\
& H * E_i  =  (H - E_1 - E_2) q + [M_2] q^2  \\
& E_1 * E_2  =  (H - E_1 - E_2) q   \\
& E_1 * E_1  =  -p + (H - E_2) q + [M_2] q^2 \\
& E_2 * E_2  =  -p + (H - E_1) q + [M_2] q^2.
\end{align*}
Consider Lagrangian spheres $L \subset M_2$ in the class $E_1-E_2$. A
straightforward calculation shows that:
\[
(E_1 - E_2)^{*3} - 5 (E_1 - E_2) q^2 = 0,
\]
and thus we obtain $\Delta_L = 5$. Multiplication of $c_1$ with $[L]$
gives: $c_1 * (E_1 - E_2) = (-1)(E_1 - E_2)q$, hence $\lambda_L = -1$.
The associated ideal (see~\S\ref{sb:eigneval}) $\mathcal{I}_L \subset
QH_*(M_2)$ is:
\[
\mathcal{I}(E_1 - E_2) = R (-2p + (E_1 + E_2)q + 2 [M_2] q^2) \oplus R
(E_1 - E_2).
\]

We now turn to Theorem~\ref{t:cubic_eq-ccl} and calculate explicitly
the coefficients $\sigma_{c,L}$, $\tau_{c,L}$ from
equation~\eqref{eq:cubic_eq_ccL}. Consider a general element $c = dH -
m_1 E_1 - m_2 E_2\in H_2(M_2)$, where $d, m_1, m_2 \in \mathbb{Z}$.
Then $\xi := c \cdot [L] = m_1 - m_2$ and we assume that $m_1 \neq
m_2$. A straightforward calculation gives:
$$\sigma_{c,L} = - \frac{m_1+m_2}{m_1-m_2}, \quad
\tau_{c,L} = \frac{m_1^2 - 3m_1 m_2 + m_2^2}{(m_1-m_2)^2}.$$ One can
easily check that $\sigma_{c,L}^2 + 4 \tau_{c,L} = 5$.

\subsubsection{3-point blow-up of $\mathbb{C}P^2$}
$QH(M_3)$ has the following ring structure:
\begin{align*}
   & p * p  =  (3H - E_1 - E_2 - E_3)q^3 + 3 [M_3]q^4 \\
   & p * H  =  (3H - E_1 - E_2 - E_3)q^2 + 3 [M_3] q^3 \\
   & p * E_i  =  (H - E_i)q^2 + [M_3]q^3 \\
   & H * H  =  p + (3H - 2E_1 - 2E_2 - 2E_3)q + 3 [M_3]q^2 \\
   & H * E_i  =  (2H - 2E_i - E_j - E_k)q + [M_3]q^2,
   \quad  i \neq j \neq k \neq i \\
   & E_i * E_i  =  -p + (2H - E_1 - E_2 - E_3)q + [M_3]q^2 \\
   & E_i * E_j = (H - E_i - E_j )q, \quad i \neq j.
\end{align*}
Consider Lagrangians $L, L' \subset M_3$ in the classes $[L] = E_i -
E_j$ and $[L'] = H - E_1 - E_2 - E_3$. The corresponding Lagrangian
cubic equations are given by:
\begin{align*}
   & (E_i - E_j)^{*3} - 4 (E_i - E_j) q^2  =  0, \\
   & (H - E_1 - E_2 - E_3)^{*3} + 3 (H - E_1 - E_2 - E_3) q^2 =  0,
\end{align*}
and thus obtain $\Delta_L = 4$ and $\Delta_{L'} = -3$.  Multiplication
with $c_1$ gives:
\begin{align*}
   & c_1 * (E_i - E_j)  =  (-2)(E_i - E_j)t, \\
   & c_1 * (H - E_1 - E_2 - E_3) = (-3) (H - E_1 - E_2 - E_3)t,
\end{align*}
hence $\lambda_L = -2$ and $\lambda_{L'} = -3$. The associated ideals
in $QH(M_3)$ are:
\begin{align*}
   & \mathcal{I}_L =
   R (-2p + 2(H - E_3)t + 2[M_3]q^2) \oplus R (E_1 - E_2), \\
   & \mathcal{I}_{L'} = R(-2p + (3H - E_1 - E_2 - E_3)q + 4 [M_3]q^2 )
   \oplus R (H - E_1 - E_2 - E_3).
\end{align*}
The Lagrangian spheres in different homology classes of the type $E_i
- E_j$ in $M_3$ have the same discriminant and the same eigenvalue
$\lambda_L$. This is so because for every $i<j$ there is a
symplectomorphism $\varphi: M_3 \longrightarrow M_3$ such that
$\varphi_*(E_1-E_2) = E_i - E_j$. In contrast, note that there exists
no symplectomorphism of $M_3$ sending $E_1-E_2$ to $H - E_1 - E_2 -
E_3$.

\subsubsection{4-point blow-up of $\mathbb{C}P^2$} $QH(M_4)$ has the
following ring structure:
\begin{align*}
   & p * p  = (9H - 3E_1 - 3E_2 - 3E_3 - 3E_4)q^3 + 10  [M_4]q^4 \\
   & p * H  = (8H - 3E_1 - 3E_2 - 3E_3 - 3E_4)q^2 + 9[M_4] q^3 \\
   & p * E_i  = (3H - 2E_i - \sum\limits_{j \neq i}E_j )q^2 + 3[M_4]q^3 \\
   & H*H    = p + (6H - 3E_1 - 3E_2 - 3E_3 - 3E_4)q + 8[M_4] q^2  \\
   & H*E_i  =  (3H - 3E_i - \sum\limits_{j \neq i}E_j)q + 3[M_4] q^2 \\
   & E_i*E_i  =  -p + (3H - 2E_i - \sum\limits_{j \neq i}E_j)q + 2[M_4] q^2 \\
   & E_i*E_j  = (H - E_i - E_j)q + [M_4] q^2
\end{align*}
As explained above it is enough to calculate our invariants for
Lagrangians in the class $E_1-E_2$. A straightforward calculation
shows that: $$(E_1-E_2)^{*3} = (E_1-E_2)q^2, \quad c_1*(E_1 - E_2) =
-3 (E_1-E_2)q,$$ hence $\Delta_L = 1$ and $\lambda_L = -3$. The
associated ideals for Lagrangians $L$, $L'$ with $[L] = E_1-E_2$ and
$L' = H - E_1 - E_2 - E_3$ are:
\begin{align*}
   & \mathcal{I}_L = R (-2p + (4H - E_1 - E_2 - 2E_3 - 2E_4)q +
   2[M_4]q^2)
   \oplus R (E_1 - E_2), \\
   & \mathcal{I}_{L'} = R (-2p + (3H - E_1 - E_2 - E_3)q + 2[M_4]q^2)
   \oplus R (H - E_1 - E_2 - E_3).
\end{align*}

\subsubsection{5-point blow-up of $\mathbb{C}P^2$} $QH(M_5)$ has the
following ring structure:
\begin{align*}
   & p * p  = (36H - 12E_1 - 12E_2 - 12E_3 - 12E_4 - 12E_5)q^3 + 52[M_5]q^4  \\
   & p * H  = (25H - 9E_1 - 9E_2 - 9E_3 - 9E_4 - 9E_5)q^2 + 36[M_5]q^3  \\
   & p * E_i  = (9H - 5E_i - 3\sum\limits_{j \neq i}E_j)q^2 + 12[M_5]q^3 \\
   & H*H      = p + (18H - 8E_1 - 8E_2 - 8E_3 - 8E_4 - 8E_5)q + 25[M_5]q^2  \\
   & H*E_i  = (8H - 6E_i - 3\sum\limits_{j \neq i}E_j)q + 9[M_5]q^2 \\
   & E_i*E_i  = -p + (6H - 4E_i - 2\sum\limits_{j \neq i}E_j)q + 5[M_5]q^2 \\
   & E_i*E_j = (3H - 2E_i - 2E_j - \sum\limits_{k \neq i,j}E_k)q +
   3[M_5]q^2
\end{align*}
As before, it is enough to consider only the case $[L] = E_1 - E_2$.
A direct calculation gives:
$$(E_1-E_2)^{*3} = 0, \quad c_1*(E_1-E_2) =  -4(E_1 - E_2)q,$$
hence $\Delta_L = 0$, $\lambda_L = -4$. 

The associated ideals for Lagrangians $L$, $L'$ with $[L] = E_1 - E_2$
and $[L'] = H - E_1 - E_2 - E_3$ are:
\begin{align*}
   & \mathcal{I}_L = R (-2p + 
   (6H - 2E_1 - 2E_2 - 2E_3 - 2E_4 - 2E_5)q + 4[M_5]q^2) \oplus R (E_1 - E_2), \\
   & \mathcal{I}_{L'} = R (-2p + (6H - 2E_1 - 2E_2 - 2E_3 - 2E_4 - 2 E_5)q + 
   4[M_5]q^2) \oplus R (H - E_1 - E_2 - E_3).
\end{align*}

\subsubsection{6-point blow-up of $\mathbb{C}P^2$} $QH(M_6)$ has the
following the ring structure:
\begin{align*}
   & p * p = (252H - 84 E_1 - 84 E_2 - 84 E_3 - 84 E_4 - 84 E_5 -
   84 E_6)q^3 + 540[M_6]q^4 \\
   & p * H = (120 H - 42 E_1 - 42 E_2 - 42 E_3 - 42 E_4 - 42 E_5 -
   42 E_6)q^2 + 252 [M_6] q^3 \\
   & p * E_i  =  ( 42H - 20 E_i - 14\sum\limits_{j\neq i} E_j )q^2 + 84 [M_6]q^3 \\
   & H*H = p + (63H - 25 E_1 - 25 E_2 - 25 E_3 - 25 E_4 - 25 E_5 -
   25 E_6)q + 120[M_6]q^2 \\
   & H*E_i  =  (25H - 15 E_i - 9\sum\limits_{j\neq i} E_j)q + 42 [M_6] q^2 \\
   & E_i*E_i  =  -p + (15H - 9E_i - 5\sum\limits_{j\neq i} E_j )q + 20 [M_6] q^2 \\
   & E_i*E_j = (9H - 5 E_i - 5 E_j - 3 \sum\limits_{k\neq i,j} E_j)q +
   14 [M_6] q^2
\end{align*}
Again, we may assume without loss of generality that $[L] = E_1 -
E_2$. A direct calculation gives:
$$(E_1-E_2)^{*3} = 0, \quad c_1*(E_1-E_2) =  -6(E_1 - E_2)q,$$
hence $\Delta_L = 0$, $\lambda_L = -6$.

Interestingly, the associated ideals $\mathcal{I}_L$ for Lagrangians
$L$ in any of the classes: $E_i - E_j$, $2H - E_i - E_j - E_l$, $2H -
E_1 - E_2 - E_3 - E_4 - E_5 - E_6$ all coincide:
$$\mathcal{I}_L =  
R ( -2p + (12H - 4 \sum_{j=1}^6 E_j)q + 12[M_6]q^2) \bigoplus R (2H -
\sum_{j=1}^6 E_j).$$

\begin{rem} \label{r:1-point}
   Note that all Lagrangian spheres in each of $M_4$, $M_5$ and $M_6$
   have the same discriminant and the same holds for the Lagrangian
   spheres in $M_3$ in the classes $E_1 - E_2$, $E_2-E_3$ and $E_1 -
   E_3$.  This follows of course from the fact that all these classes
   belong to the same orbit of the action of the symplectomorphism
   group (on each of the $M_k$'s). However, here is a different
   potential explanation which might give more insight. Consider for
   example the classes $E_1 - E_2$ and $E_2 - E_3$ in $M_3$. It seems
   reasonable to expect that there exist Lagrangian spheres $L_1, L_2
   \subset M_3$ with $[L_1] = E_1 - E_2$, $[L_2] = E_2 - E_3$ such
   that $L_1$ and $L_2$ intersect transversely at exactly one point.
   (We have not verified the details of that, but this seems plausible
   in view of the constructions outlined at the beginning
   of~\S\ref{sbsb:constructions}). The fact that $\Delta_{L_1} =
   \Delta_{L_2}$ would now follow from Corollary~\ref{c:del_1=del_2}.
   Similar arguments should apply to many other pairs of classes on
   $M_4$, $M_5$ and $M_6$. This would also explain why in all these
   cases the discriminants turn out to be perfect squares.
\end{rem}

\subsection{Lagrangian spheres in hypersurfaces of
  ${\mathbb{C}}P^{n+1}$} \cntrsb \label{sb:exp-hypsurf} 

Let $M^{2n} \subset \mathbb{C}P^{n+1}$ be a Fano hypersurface of
degree $d$, where $n \geq 3$. We endow $M$ with the symplectic
structure induced from ${\mathbb{C}}P^{n+1}$. It is easy to check that
$M$ is monotone and that the minimal Chern number is $C_M = n+2 - d$.

We view the homology $H_*(M;\mathbb{Q})$ as a ring, endowed with the
intersection product which we denote by $a \cdot b$ for $a, b \in
H_*(M;\mathbb{Q})$. Write $h \in H_{2n-2}(M;\mathbb{Q})$ for the class
of a hyperplane section. The homology $H_*(M;\mathbb{Q})$ is generated
as a ring by the class $h$ and the subspace of primitive classes,
denoted by $H_n(M; \mathbb{Q})_0$.  (Recall that the latter is by
definition the kernel of the map $H_n(M; \mathbb{Q}) \longrightarrow
H_{n-2}(M;\mathbb{Q})$, $a \longmapsto a \cdot h$).

Assume that $d \geq 2$. Then by Picard-Lefschetz theory $M$ contains
Lagrangian spheres (that can be realized as vanishing cycles of the
Lefschetz pencil associated to the embedding $M \subset
{\mathbb{C}}P^{n+1}$). 

Let $L \subset M$ be a Lagrangian sphere and assume further that $d
\geq 3$. To calculate $[L]^{*3}$ we appeal to the work of
Collino-Jinzenji~\cite{Co-Ji:QH-hpersurf} (see
also~\cite{Gi:equivariant, Beau:quant, Ti:qh-assoc} for related
results). We set $x := h + d![M]q$ if $C_M = 1$, and $x := h$, if $C_M
\geq 2$. Specifically, we will need the following:
\begin{thm}[Collino-Jinzenji~\cite{Co-Ji:QH-hpersurf}]
   \label{t:qh-hypersuf} 
   In the quantum homology ring of $M$ with coefficients in
   $\mathbb{Q}[q]$ we have the following identities:
   \begin{enumerate}
     \item $x * a = 0$ for every $a \in H_n(M; \mathbb{Q})_0$.
     \item $a * b = \frac{1}{d} \#(a \cdot b) (x^{*n} - d^d x^{*(d-2)}
      q^{n+2-d})$ for every $a, b \in H_n(M; \mathbb{Q})_0$.
   \end{enumerate}
\end{thm}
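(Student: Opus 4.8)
The plan is to combine monodromy invariance of genus $0$ Gromov--Witten invariants with the representation theory of the Lefschetz monodromy, reducing everything to one explicit structure constant. Write $V := H_n(M;\mathbb{Q})_0$ and recall the decomposition $H_*(M;\mathbb{Q}) = H_*(M;\mathbb{Q})_{amb}\oplus V$, where $H_*(M;\mathbb{Q})_{amb}$ is spanned by $p,\,h,\,h\cdot h,\dots$ (the image of $H_*(\mathbb{C}P^{n+1})$). Consider the universal family of smooth degree $d$ hypersurfaces in $\mathbb{C}P^{n+1}$; its monodromy fixes every ambient class and acts on $V$ as the vanishing cohomology representation. I will use two classical inputs. \textbf{(a)} By Lefschetz theory (see Beauville) this representation is irreducible and its image is Zariski dense in $O(V)$ when $n$ is even and in $Sp(V)$ when $n$ is odd, the relevant form being the intersection pairing; hence the monodromy invariant bilinear forms on $V$ are one dimensional, spanned by the intersection pairing, and $\mathrm{Hom}_{\mathrm{mon}}(\mathrm{Sym}^2 V,V)=0$ (resp. $\mathrm{Hom}_{\mathrm{mon}}(\wedge^2 V,V)=0$). \textbf{(b)} Genus $0$ GW invariants are deformation invariants, hence monodromy invariant, so with the ambient insertions held fixed the maps $a\mapsto x*a$ and $(a,b)\mapsto a*b$ are monodromy equivariant.

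\textbf{Step 1: $x*a=0$ for $a\in V$.} Since $V$ is by definition the kernel of $a\mapsto a\cdot h$, the classical ($q^0$) part $h\cdot a$ of $h*a$ vanishes. Degree bookkeeping (the quantum product lands in $QH_{i+j-2n}$ and $|q|=-2$) forces the $q^\delta$ part of $h*a$ into $H_{n-2+2\delta C_M}(M)\,q^\delta$. If $C_M\ge 2$, no $\delta\ge 1$ hits degree $n$, so $h*a$ is ambient valued; being the equivariant image of the irreducible nontrivial module $V$ it vanishes by Schur, and here $x=h$, so we are done. If $C_M=1$, only $\delta=1$ can hit degree $n$; Schur forces this term to be $c_\ell\,q\,a$ for a scalar $c_\ell$ (its ambient component vanishes as before) and kills all higher terms, so $h*a=c_\ell\,q\,a$ with $c_\ell=\langle a,b\rangle_{0,2,\ell}/\#(a\cdot b)$ the $2$--pointed line invariant. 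As $[M]$ is the unit of the quantum product, $[M]q*a=q\,a$, so $x*a=(c_\ell+d!)\,q\,a$; it remains to compute the single number $c_\ell=-d!$ (e.g. via quantum Lefschetz).

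\textbf{Step 2: $a*b=\tfrac1d\#(a\cdot b)\bigl(x^{*n}-d^d x^{*(d-2)}q^{n+2-d}\bigr)$.} Split $a*b$ in each $q$--degree into its $V$--valued and ambient valued parts. The $V$--valued part is a monodromy equivariant map out of $\mathrm{Sym}^2 V$ (or $\wedge^2V$ for $n$ odd), hence $0$ by (a). Pairing the ambient valued part against ambient classes gives a monodromy invariant bilinear form on $V$, so by (a) it equals $\#(a\cdot b)$ times a universal ambient element; thus $a*b=\#(a\cdot b)\,\Phi$ with $\Phi\in QH_*(M)_{amb}$ independent of $a,b$. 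Its classical part is $a\cdot b=\tfrac1d\#(a\cdot b)(h\cdot\dots\cdot h)$, so $\Phi$ begins with $\tfrac1d h^{*n}$. Picking $a,b\in V$ with $\#(a\cdot b)\ne 0$ (possible since the intersection form is nondegenerate on $V$), Step 1 gives $0=x*(a*b)=(x*a)*b=\#(a\cdot b)(x*\Phi)$, so $x*\Phi=0$; in the ambient quantum subring, which by Givental's quantum Lefschetz theorem (equivalently the Collino--Jinzenji presentation) is $\mathbb{Q}[q][x]/\bigl(x^{*(n+1)}-d^d x^{*(d-1)}q^{n+2-d}\bigr)$, the annihilator of $x$ is the rank one module generated by $x^{*n}-d^d x^{*(d-2)}q^{n+2-d}$ (note $x*(x^{*n}-d^d x^{*(d-2)}q^{n+2-d})=x^{*(n+1)}-d^d x^{*(d-1)}q^{n+2-d}=0$). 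Matching the classical term then identifies $\Phi$ and gives the formula.

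\textbf{Main obstacle.} The structural part — the splitting of $a*b$ and the vanishing of $x*a$ up to scalars — is formal once the monodromy input is granted. The real difficulty is arithmetic: the constant $d^d$, the corrected class $x=h+d![M]q$ in the case $C_M=1$, and the number $c_\ell=-d!$. These are exactly the content of Givental's quantum Lefschetz hyperplane theorem, or, hypersurface--specifically, of Collino--Jinzenji's torus--localization computation on spaces of stable maps to $\mathbb{C}P^{n+1}$; I would import this as a black box rather than reprove it, since reproving it is a substantial undertaking in its own right. One should also check the handful of small $(n,d)$ where the primitive homology or the monodromy image could degenerate, but for $d\ge 3$, $n\ge 3$ the Lefschetz theory applies.
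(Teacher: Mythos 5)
The paper does not actually prove this statement: Theorem~\ref{t:qh-hypersuf} is quoted from Collino--Jinzenji~\cite{Co-Ji:QH-hpersurf} and used as a black box, so there is no internal proof to compare yours against. What you have written is, in essence, a reconstruction of the argument by which the result is established in the cited literature: the monodromy/Schur reduction of the quantum product on primitive classes to a single universal ambient element is Beauville's argument from~\cite{Beau:quant}, and the remaining arithmetic input --- the ambient relation $x^{*(n+1)} = d^d x^{*(d-1)}q^{n+2-d}$, the corrected class $x = h + d!\,[M]q$ when $C_M=1$, and the two-point line invariant giving $c_\ell=-d!$ --- is precisely the content of Givental's quantum Lefschetz theorem and of the Collino--Jinzenji computation, which you sensibly import rather than reprove. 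The structural steps are sound: the degree bookkeeping showing that only $\delta C_M=1$ can produce a primitive-valued correction to $h*a$; the vanishing of $\mathrm{Hom}_{\mathrm{mon}}(\mathrm{Sym}^2V,V)$ (resp.\ of $\mathrm{Hom}_{\mathrm{mon}}(\wedge^2V,V)$ for $n$ odd) together with the one-dimensionality of the space of invariant pairings, which, as you note, require Zariski density of the monodromy image and not mere irreducibility --- valid for $n\ge 3$, $d\ge 3$, the range in which the paper invokes the theorem; the computation of the annihilator of $x$ in $\mathbb{Q}[q][x]/\bigl(x^{*(n+1)}-d^dx^{*(d-1)}q^{n+2-d}\bigr)$, which is correct because $\mathbb{Q}[q][X]$ is a domain; and the normalization $\Phi=\tfrac1d\bigl(x^{*n}-d^dx^{*(d-2)}q^{n+2-d}\bigr)$ read off from the classical term $h^{\cdot n}=d\,p$. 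So the proposal is a correct proof outline, with the caveat you state yourself: its genuinely hard quantitative content is the same external computation that the paper, too, simply cites.
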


Coming back to our Lagrangian spheres $L \subset M$, we clearly have
$[L] \in H_n(M; \mathbb{Q})_0$. Therefore we obtain from
Theorem~\ref{t:qh-hypersuf}:
\begin{equation} \label{eq:L*3-hypersuf} [L]*[L]*[L] = \frac{1}{d}
   \#([L]\cdot [L]) (x^{*n}*[L] - d^d x^{*(d-2)}*[L] q^{n+2-d}) = 0,
\end{equation}
where in the last equality we have used that $d>2$ (hence
$x^{*(d-2)}*[L]=0$). 

If we also assume that $2C_M | n$, then the Lagrangian spheres $L
\subset M$ have minimal Maslov number $N_L = 2C_M$ and it is easy to
see that they satisfy Assumption~$\mathscr{L}$ (see e.g.
Proposition~\ref{p:criterion}). Therefore in this case the
discriminant $\Delta_L$ is defined and we clearly have $\Delta_L = 0$.
(Note that when $2C_M | n$ we must have $d>2$.)

Finally, we discuss the case $d=2$. A straightforward calculation
based on the quantum homology ring structure of the quadric (see
e.g.~\cite{Beau:quant}) shows that Lagrangian spheres $L \subset M$
satisfy $[L]^{*3} = (-1)^{\frac{n(n-1)}{2}+1}4 [L] q^n$
if $n=$~even and $[L]=0$ (hence
$[L]^{*2}=0$) if $n=$~odd.

\subsubsection{An example which is not a sphere}
\label{sbsb:prod-spheres} All our examples so far were for Lagrangians
that are spheres. However, our theory is more general and applies to
other topological types of Lagrangians (see e.g.
Assumption~$\mathscr{L}$, Proposition~\ref{p:criterion} and
Theorem~\ref{t:cubic-eq}). Here is such an example with $L \approx
S^{m} \times S^{m}$.

Let $Q \subset \mathbb{C}P^{m+1}$ be the complex $n$-dimensional
quadric $Q = \{ [z_0: \ldots : z_{m+1}] \,|\, -z_0^2 + \ldots +
z_{m+1}^2 = 0 \}$ endowed with the symplectic structure induced from
${\mathbb{C}}P^{m+1}$. Then $S := \{ [z_0: \ldots : z_{m+1}] \,|\,
-z_0^2 + \ldots + z_{m+1}^2 = 0, z_i \in \mathbb{R} \}$ is a
Lagrangian sphere. The first Chern class $c_1$ of $Q$ equals the
Poincar\'e dual of $m h$, where $h$ is a hyperplane section of $Q$
associated to the projective embedding $Q \subset
{\mathbb{C}}P^{m+1}$.  The minimal Chern number is $C_Q = m$ and $S$
has minimal Maslov number $N_S = 2m$. Note that $S$ does not satisfy
Assumption~$\mathscr{L}$ (since $N_S$ does not divide $m$). {\em
  Henceforth we will assume that $m=$ even.}

Put $M = Q \times Q$ endowed with the split symplectic structure
induced from both factors and consider the Lagrangian submanifold $L
\subset M$ which is the product of two copies of $S$:
$$L := S \times S \subset Q \times Q.$$
Put $2n = \dim_{\mathbb{R}} M$ so that $\dim L = n = 2m$. 

The symplectic manifold $Q \times Q$ has minimal Chern number $C_M =
m$ and the minimal Maslov number of $L$ is $N_{L} = 2m = n$.  By
Proposition~\ref{p:criterion}, $L$ satisfies Assumption~$\mathscr{L}$.

For our calculations the following identities in the quantum homology
ring of $Q$ will be relevant (see e.g.~\cite{Beau:quant}):
\begin{enumerate}
  \item $h * [S] = 0$.
  \item $a * b = \frac{1}{2} \#(a \cdot b) (h^{*m} - 4 [Q]q^m)$ for
   every $a, b \in H_m(Q;\mathbb{Q})_0$.
\end{enumerate}
To calculate $\Delta_L$ we compute $[L]^{*3}$ in $QH(Q \times Q)$.  By
the K\"{u}nneth formula in quantum homology~\cite{McD-Sa:jhol} we have
$QH(Q \times Q; \mathbb{Z}[q]) \cong QH(Q; \mathbb{Z}[q])
\otimes_{\mathbb{Z}[q]} QH(Q; \mathbb{Z}[q])$. Together with the
previous identities (with $a=b=[S]$) this gives:
$$[L]*[L] = ([S]*[S]) \otimes ([S]*[S]) =(h^{*m} - 4[Q]q^m)
\otimes (h^{*m} - 4[Q]q^m),$$ and therefore
$$[L]^{*3} = (h^{*m}*[S] - 4[S]q^m) \otimes (h^{*m}*[S] - 4[S]q^m) = 
16 [S] \otimes [S] q^{2m} = 16[L]q^{2m}.$$ It follows that $\sigma_L =
0$ and $\tau_L = 1$ (in the notation of Theorem~\ref{t:cubic-eq}),
hence $\Delta_L = 4\tau_L = 4$.



\section{Finer invariants over the positive group ring} \cntrs
\label{s:coeffs}

Much of the theory developed in the previous sections can be enriched
so that the discriminant $\Delta_L$ and the cubic equation take into
account the homology classes of the holomorphic curves involved in
their definition. The result is clearly a finer invariant.

We now briefly explain this generalization. Let $L \subset (M,
\omega)$ be a monotone Lagrangian submanifold. Denote by $H_2^D(M,L)
\subset H_2(M,L;\mathbb{Z})$ the image of the Hurewicz homomorphism
$\pi_2(M,L) \longrightarrow H_2(M,L;\mathbb{Z})$. We abbreviate $H_2^D
= H_2^D(M,L)$ when $L$ is clear from the discussion.

We will use here the ring $\widetilde{\Lambda}^+$, introduced
in~\cite{Bi-Co:rigidity}, which is the most general ring of
coefficients for Lagrangian quantum homology. It can be viewed as a
positive version (with respect to $\mu$) of the group ring over
$H_2^D$. Specifically, denote by $\widetilde{\Lambda}^+$ the following
ring:
\begin{equation} \label{eq:lambda-tilde+}
   \widetilde{\Lambda}^+ = \biggl \{ p(T) \mid
   p(T) = c_0 + \sum_{\substack{A \in H_2^D \\ \mu(A)>0}} c_A T^{A},
   \quad c_0, c_A \in \mathbb{Z}\biggr \}.
\end{equation}
We grade $\widetilde{\Lambda}^+$ by assigning to the monomial $T^A$
degree $|T^A| = -\mu(A)$. Note that the degree-$0$ component of
$\widetilde{\Lambda}^+$ is just $\mathbb{Z}$ (not linear combinations
of $T^{A}$ with $\mu(A)=0$). As explained in~\cite{Bi-Co:rigidity} we
can define $QH(L;\widetilde{\Lambda}^+)$, and in fact $QH(L;
\mathcal{R})$ for rings $\mathcal{R}$ which are
$\widetilde{\Lambda}^+$-algebras.

Similarly to $\widetilde{\Lambda}^+$ we associate to the ambient
manifold the ring $\widetilde{\Gamma}^+$. This ring is defined in the
same way as $\widetilde{\Lambda}^+$ but with $H_2^D$ replaced by
$H_2^S := \textnormal{image\,} (\pi_2(M) \longrightarrow
H_2(M;\mathbb{Z}))$ and with $\mu(A)>0$ replaced by $\langle c_1, A
\rangle > 0$ in~\eqref{eq:lambda-tilde+}. To avoid confusion we will
denote the formal variable in $\widetilde{\Gamma}^+$ with $S$ and we
grade $|S^A| = -2\langle c_1, A \rangle$. Similarly to $QH(L;
\widetilde{\Lambda}^+)$ we can define the ambient quantum homology
$QH(M; \widetilde{\Gamma}^+)$ with coefficients in
$\widetilde{\Gamma}^+$ and in fact with coefficients in any ring
$\mathcal{A}$ which is a $\widetilde{\Gamma}^+$-algebra.  In
particular, since the map $H_2^S \longrightarrow H_2^D$ gives
$\widetilde{\Lambda}^+$ the structure of an
$\widetilde{\Gamma}^+$-algebra and we can define $QH(M;
\widetilde{\Lambda}^+) = QH(M; \widetilde{\Gamma}^+)
\otimes_{\widetilde{\Gamma}^+} \widetilde{\Lambda}^+$.

Assume for simplicity that $L$ satisfies the assumptions of
Proposition~\ref{p:criterion}. Then the conclusion of
Proposition~\ref{p:criterion} holds with $HF(L,L)$ replaced by
$QH(L;\widetilde{\Lambda}^+)$ in the sense that
\pbred{$\textnormal{rank}_{\mathbb{Z}} \, QH_0(L;
  \widetilde{\Lambda}^+)/\widetilde{\Lambda}^+_{-n} e_L = 1$, where
  $\widetilde{\Lambda}^+_{-n} \subset \widetilde{\Lambda}^+$ stands
  for the subgroup generated by the homogeneous elements of degree
  $-n$.}  Assume further that $L$ is oriented and spinable. Again, the
main example satisfying all these assumptions is $L$ being a
Lagrangian sphere in a monotone symplectic manifold $M$ with $2 C_M |
\dim L$.

The definition of the discriminant $\Delta_L$ carries over to this
setting as follows. Pick an element $x \in QH_0(L;
\widetilde{\Lambda}^+)$ which lifts $[\textnormal{point}] \in H_0(L)$
as in~\S\ref{sbsb:Q=QH_n}. Write $$x*x = \widetilde{\sigma} x +
\widetilde{\tau} e_L,$$ where $\widetilde{\sigma}, \widetilde{\tau}
\in \widetilde{\Lambda}^+$ are elements of degrees
$|\widetilde{\sigma}| = - n$ and $|\widetilde{\tau}| = -2n$
respectively.  As before, the elements $\widetilde{\sigma}$ and
$\widetilde{\tau}$ depend on $x$.  Define
$$\widetilde{\Delta}_L = \widetilde{\sigma}^2 + 4 \widetilde{\tau} \in
\widetilde{\Lambda}^+.$$ The same arguments as
in~\S\ref{sb:more-discr} show that $\widetilde{\Delta}_L$ is
independent of the choice of $x$.

Theorems~\ref{t:cubic-eq-sphere},~\ref{t:cubic-eq} continue to hold
but the cubic equation~\eqref{eq:cubic-L} now has the form:
\begin{equation} \label{eq:cubic-L-full-ring} [L]^{*3} - \varepsilon
   \chi \widetilde{\sigma}_L [L]^{*2} - \chi^2 \widetilde{\tau}_L [L]
   = 0,
\end{equation}
where $\widetilde{\sigma}_L \in \tfrac{1}{\chi^2}
\widetilde{\Lambda}^+$, $\widetilde{\tau}_L \in \tfrac{1}{\chi^3}
\widetilde{\Lambda}^+$ are uniquely determined. (Note that
in~\eqref{eq:cubic-L-full-ring} we do not have the variable $q$
anymore since the elements $\chi^2 \widetilde{\sigma}_L, \chi^3
\widetilde{\tau}_L$ are assumed in advance to be in the ring
$\widetilde{\Lambda}^+$.) As for identity~\eqref{eq:GW-LLL}, it now
becomes:
\begin{equation} \label{eq:GW-LLL-full-ring} \widetilde{\sigma}_L =
   \frac{1}{\chi^2} \sum_A GW_{A,3}([L],[L],[L])T^{j(A)},
\end{equation}
where $j: H_2^S \longrightarrow H_2^D$ is the map induced by
inclusion.

Analogous versions of Theorem~\ref{t:cubic_eq-ccl} hold over
$\widetilde{\Lambda}^+$ too.

Denoting by $\bar{L}$ the Lagrangian $L$ with the opposite orientation,
it is easy to check that
\begin{equation} \label{eq:phi-action}
   \widetilde{\sigma}_{\bar{L}} =
   -\widetilde{\sigma}_L, \quad \widetilde{\tau}_{\bar{L}} =
   \widetilde{\tau}_L, \quad \widetilde{\Delta}_{\bar{L}} =
   \widetilde{\Delta}_L.
\end{equation}

We now discuss the action of symplectic diffeomorphisms on these
invariants. Let $\varphi: M \longrightarrow M$ be a symplectomorphism.
The action $\varphi^M_*: H_2^S \longrightarrow H_2^S$ of $\varphi$ on
homology induces an isomorphism of rings $\varphi_{\Gamma}:
\widetilde{\Gamma}^+ \longrightarrow \widetilde{\Gamma}^+$. Put $L' =
\varphi(L)$. Instead of the preceding ring $\widetilde{\Lambda}^+$ we
now have two rings $\widetilde{\Lambda}_L^+$ and
$\widetilde{\Lambda}_{L'}^+$ associated to $L$ and to $L'$
respectively. The action $\varphi^{(M,L)}_*: H_2^D(M,L) \longrightarrow
H_2^D(M,L')$ of $\varphi$ on homology induces an isomorphism of rings
$\varphi_{\Lambda}: \widetilde{\Lambda}_L^+ \longrightarrow
\widetilde{\Lambda}_{L'}^+$.  Moreover, writing an
$\mathcal{R}$-algebra $\mathcal{A}$ as $_{\mathcal{R}}\mathcal{A}$,
the pair of maps $(\varphi_{\Lambda}, \varphi_{\Gamma})$ gives rise to
an isomorphism of algebras
$_{\widetilde{\Gamma}^+}\widetilde{\Lambda}_L^+ \longrightarrow \,
_{\widetilde{\Gamma}^+} \widetilde{\Lambda}_{L'}^+$.

Turning to quantum homologies, standard arguments together with the
previous discussion yield two ring isomorphisms (both denoted
$\varphi_Q$ by abuse of notation): $$\varphi_Q: QH(L;
\widetilde{\Lambda}^+_{L}) \longrightarrow QH(L';
\widetilde{\Lambda}^+_{L'}), \quad \varphi_Q: QH(M;
\widetilde{\Lambda}^+_{L}) \longrightarrow QH(M;
\widetilde{\Lambda}^+_{L'}),$$ which are linear over
$\widetilde{\Gamma}^+$ via $\varphi_{\Gamma}$ and also
$(\widetilde{\Lambda}_L^+, \widetilde{\Lambda}_{L'}^+)$ linear via
$\varphi_{\Lambda}$. Most of the theory from~\S\ref{sb:HF} extends,
with suitable modifications, to the present setting.

The following follows immediately from the preceding discussion
and~\eqref{eq:phi-action} above:
\begin{thm} \label{t:phi-action}
   Let $\varphi: M \longrightarrow M$ be a symplectomorphism. Then:
   $$\widetilde{\sigma}_{\varphi(L)} =
   \varphi_{\Lambda}(\widetilde{\sigma}_L), \quad
   \widetilde{\tau}_{\varphi(L)} =
   \varphi_{\Lambda}(\widetilde{\tau}_L), \quad
   \widetilde{\Delta}_{\varphi(L)} =
   \varphi_{\Lambda}(\widetilde{\Delta}_L).$$ In particular
   $\widetilde{\tau}_L$ and $\widetilde{\Delta}_L$ are invariant under
   the action of the group $\textnormal{Symp}(M,L)$ of
   symplectomorphisms $\varphi: (M, L) \longrightarrow (M,L)$ and
   $\widetilde{\sigma}_L$ is invariant under the action of the
   subgroup $\textnormal{Symp}^+(M,L) \subset \textnormal{Symp}(M,L)$
   of those $\varphi$'s that preserve the orientation on $L$. If
   $\varphi \in \textnormal{Symp}(M,L)$ reverses orientation on $L$
   then $\varphi_{\Lambda}(\widetilde{\sigma}_L) =
   -\widetilde{\sigma}_L$.
\end{thm}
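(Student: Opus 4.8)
The plan is to deduce Theorem~\ref{t:phi-action} from the functoriality of Lagrangian quantum homology under symplectomorphisms, recorded in the discussion preceding the theorem, together with the uniqueness of the coefficients in~\eqref{eq:cubic-L-full-ring} and the sign relations~\eqref{eq:phi-action}. Write $L' = \varphi(L)$. First I would spell out the precise naturality of the maps $\varphi_Q$ (already implicit in ``most of the theory from~\S\ref{sb:HF} extends''): they are unital ring isomorphisms $QH(L;\widetilde{\Lambda}^+_L) \to QH(L';\widetilde{\Lambda}^+_{L'})$ and $QH(M;\widetilde{\Lambda}^+_L) \to QH(M;\widetilde{\Lambda}^+_{L'})$, linear over $\varphi_\Lambda$ and $\varphi_\Gamma$, intertwining the quantum inclusion maps ($\varphi_Q \circ i_L = i_{L'} \circ \varphi_Q$) and the augmentations ($\widetilde{\epsilon}_{L'} \circ \varphi_Q = \varphi_\Lambda \circ \widetilde{\epsilon}_L$); in particular $\varphi_Q(e_L) = e_{L'}$, hence $\varphi_Q([L]) = \varphi_Q(i_L(e_L)) = i_{L'}(e_{L'}) = [L']$, and $\chi(L') = \chi(L)$ because $\varphi|_L$ is a diffeomorphism.

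Next I would transport the two defining relations across $\varphi_Q$. For $\widetilde{\Delta}_L$: pick a lift $x \in QH_0(L;\widetilde{\Lambda}^+_L)$ of $[\textnormal{point}] \in H_0(L)$ with $x*x = \widetilde{\sigma}\,x + \widetilde{\tau}\,e_L$; then $x' := \varphi_Q(x)$ is again a lift of $[\textnormal{point}] \in H_0(L')$ (since $\varphi_Q$ intertwines the augmentations and $\varphi|_L$ carries the point class to the point class), and applying $\varphi_Q$ gives $x'*x' = \varphi_\Lambda(\widetilde{\sigma})\,x' + \varphi_\Lambda(\widetilde{\tau})\,e_{L'}$, whence, using independence of $\widetilde{\Delta}$ of the chosen lift and that $\varphi_\Lambda$ is a ring homomorphism, $\widetilde{\Delta}_{L'} = \varphi_\Lambda(\widetilde{\sigma})^2 + 4\varphi_\Lambda(\widetilde{\tau}) = \varphi_\Lambda(\widetilde{\sigma}^2 + 4\widetilde{\tau}) = \varphi_\Lambda(\widetilde{\Delta}_L)$. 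For $\widetilde{\sigma}_L, \widetilde{\tau}_L$: apply the ambient $\varphi_Q$ to~\eqref{eq:cubic-L-full-ring} and use $\varphi_Q([L]) = [L']$, $\chi(L') = \chi(L)$; this yields a cubic equation of the same shape for $L'$ with coefficients $\varphi_\Lambda(\widetilde{\sigma}_L)$, $\varphi_\Lambda(\widetilde{\tau}_L)$, and the uniqueness of the coefficients in~\eqref{eq:cubic-L-full-ring} for $L'$ forces $\widetilde{\sigma}_{\varphi(L)} = \varphi_\Lambda(\widetilde{\sigma}_L)$ and $\widetilde{\tau}_{\varphi(L)} = \varphi_\Lambda(\widetilde{\tau}_L)$.

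Finally I would read off the ``in particular'' clause. If $\varphi \in \textnormal{Symp}(M,L)$ and $\varphi$ preserves the orientation of $L$, then $\varphi(L) = L$ as \emph{oriented} Lagrangians and $\widetilde{\Lambda}^+_{L'} = \widetilde{\Lambda}^+_L$, so the three identities become $\varphi_\Lambda(\widetilde{\sigma}_L) = \widetilde{\sigma}_L$, $\varphi_\Lambda(\widetilde{\tau}_L) = \widetilde{\tau}_L$, $\varphi_\Lambda(\widetilde{\Delta}_L) = \widetilde{\Delta}_L$, i.e. the claimed invariance; if $\varphi$ reverses orientation, then $\varphi(L) = \bar{L}$ as oriented Lagrangians, and combining the general formulas with~\eqref{eq:phi-action} gives $\varphi_\Lambda(\widetilde{\sigma}_L) = \widetilde{\sigma}_{\bar{L}} = -\widetilde{\sigma}_L$ (while still $\varphi_\Lambda(\widetilde{\tau}_L) = \widetilde{\tau}_L$ and $\varphi_\Lambda(\widetilde{\Delta}_L) = \widetilde{\Delta}_L$).

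The routine bookkeeping above is harmless; the real content sits in the functoriality input, namely that pushing all auxiliary data along $\varphi$ --- an $\omega$-compatible almost complex structure, a Morse datum, and a spin structure on $L$ --- identifies the pearl complex of $L$ with that of $L'$ compatibly with products, units, $i_L$, $\widetilde{\epsilon}_L$ and with $\varphi_\Lambda, \varphi_\Gamma$ on the coefficient rings. This includes tracking the coherent orientations on the pearl moduli spaces, which is precisely the mechanism by which an orientation-reversing $\varphi$ ends up acting on $\widetilde{\sigma}_L$ by a sign; granting this (as the preceding discussion does), the theorem follows from the manipulations above.
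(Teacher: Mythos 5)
Your proposal is correct and follows exactly the route the paper intends: the paper gives no written proof beyond ``follows immediately from the preceding discussion and~\eqref{eq:phi-action}'', and your argument is precisely an unpacking of that discussion --- transporting the defining relations through the isomorphisms $\varphi_Q$ (linear over $\varphi_\Lambda$, $\varphi_\Gamma$), invoking uniqueness of the coefficients in~\eqref{eq:cubic-L-full-ring}, and using~\eqref{eq:phi-action} for the orientation-reversing case. Nothing in your write-up deviates from or adds to what the authors take as given.
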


Next we have the following analogue of Corollary~\ref{c:sig=0}:
\begin{cor} \label{c:sig_L-sphere} Let $L \subset M$ be a Lagrangian
   sphere, where $M$ is a monotone symplectic manifold with $2C_M |
   \dim L$. Then $\widetilde{\sigma}_L = 0$. In particular,
   $\widetilde{\Delta}_L = 4 \widetilde{\tau}_L$.
\end{cor}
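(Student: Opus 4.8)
The plan is to imitate the proof of Corollary~\ref{c:sig=0}, taking for the symplectomorphism the generalized Dehn twist along $L$, but now feeding it into Theorem~\ref{t:phi-action} rather than into a scalar cubic equation. First I record the elementary consequences of the hypotheses: since $2C_M \mid \dim L$ and $2C_M$ is even, we must have $C_M < \infty$ and $n := \dim L$ even; hence $\chi(L) = \chi(S^n) = 2 \neq 0$, and by Proposition~\ref{p:criterion} the sphere $L$ satisfies Assumption~$\mathscr{L}$, so that $\widetilde{\sigma}_L \in \tfrac14 \widetilde{\Lambda}^+$, $\widetilde{\tau}_L \in \tfrac18 \widetilde{\Lambda}^+$ and $\widetilde{\Delta}_L$ are all well defined.

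Next I would let $\varphi := \tau_L \colon M \longrightarrow M$ be the generalized Dehn twist along the Lagrangian sphere $L$. Being supported in a Weinstein neighborhood of $L$ modeled on $DT^{*}S^{n}$, it is a symplectomorphism with $\varphi(L) = L$ as a subset, and its restriction to the zero section $L \cong S^{n}$ is the antipodal map, of degree $(-1)^{n+1} = -1$ since $n$ is even. Thus $\varphi \in \textnormal{Symp}(M,L)$ and $\varphi$ reverses the orientation of $L$, so Theorem~\ref{t:phi-action} applies and yields $\varphi_{\Lambda}(\widetilde{\sigma}_L) = -\widetilde{\sigma}_L$ in $\tfrac14\widetilde{\Lambda}^+$ (and also $\varphi_\Lambda(\widetilde\tau_L) = \widetilde\tau_L$, $\varphi_\Lambda(\widetilde\Delta_L) = \widetilde\Delta_L$, which we will not need).

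The key remaining point, and the one I expect to be the main obstacle, is to show that the ring automorphism $\varphi_{\Lambda}$ of $\widetilde{\Lambda}^+$ is the identity, equivalently that $\varphi$ acts trivially on $H_2^{D}(M,L) \subset H_2(M,L;\mathbb{Z})$. For $n \geq 3$ one has $H_2(M,L) \cong H_2(M)$ (as $H_1(L) = H_2(L) = 0$), and the Picard--Lefschetz formula shows $\varphi_{*} = \textnormal{id}$ on $H_k(M)$ for $k \neq n$, in particular on $H_2(M)$. For $n = 2$ (so $C_M = 1$) one has $H_2(M,L) = H_2(M)/\mathbb{Z}[L]$, and the Picard--Lefschetz action $x \mapsto x - (x \cdot [L])[L]$ on $H_2(M)$ descends to the identity on this quotient. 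In either case $\varphi$ acts trivially on $H_2^{D}(M,L)$, hence $\varphi_{\Lambda} = \textnormal{id}_{\widetilde{\Lambda}^+}$.

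Combining the last two steps gives $\widetilde{\sigma}_L = \varphi_{\Lambda}(\widetilde{\sigma}_L) = -\widetilde{\sigma}_L$, so $2\widetilde{\sigma}_L = 0$; since $\widetilde{\Lambda}^+$, and hence $\tfrac14\widetilde{\Lambda}^+$, is torsion-free, this forces $\widetilde{\sigma}_L = 0$. Finally, by the version over $\widetilde{\Lambda}^+$ of Theorem~\ref{t:cubic_eq-ccl} (taken with $c = [L]$), i.e.\ the $\widetilde{\Lambda}^+$-analogue of Theorem~\ref{t:rel-to-discr}, one has $\widetilde{\Delta}_L = \widetilde{\sigma}_L^2 + 4\widetilde{\tau}_L$, whence $\widetilde{\Delta}_L = 4\widetilde{\tau}_L$. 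As an alternative to the middle two paragraphs one could instead apply the ring map $\varphi_Q$ directly to the cubic equation~\eqref{eq:cubic-L-full-ring}, using $\varphi_Q([L]) = -[L]$ together with $\varphi_{\Lambda} = \textnormal{id}$ exactly as in the proof of Corollary~\ref{c:sig=0}, to obtain $\widetilde{\sigma}_L [L]^{*2} = 0$ and then $\widetilde{\sigma}_L = 0$; but this needs the extra input that $[L]^{*2}$ is not a zero divisor for homogeneous elements of negative degree in $QH(M;\widetilde{\Lambda}^+)$, so the route through Theorem~\ref{t:phi-action} is cleaner.
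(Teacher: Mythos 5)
Your proposal is correct and follows essentially the same route as the paper: apply the Dehn twist along $L$, note that it reverses orientation on $L$ since $n$ is even, invoke Theorem~\ref{t:phi-action} to get $\varphi_{\Lambda}(\widetilde{\sigma}_L) = -\widetilde{\sigma}_L$, and then show $\varphi_{\Lambda} = \textnormal{id}$ by checking triviality of the action on $H_2(M,L)$ separately for $n>2$ (where $H_2(M,L)\cong H_2(M)$) and $n=2$ (where the Picard--Lefschetz action descends to the identity on $H_2(M)/\mathbb{Z}[L]$). The only cosmetic difference is that for $n>2$ the paper pushes a $2$-cycle off the support of the Dehn twist rather than citing Picard--Lefschetz in degrees $\neq n$; both give the same conclusion.
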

\begin{proof}
   Denote by $\varphi : M \longrightarrow M$ the Dehn-twist associated
   to the Lagrangian sphere $L$. Since $n=\dim L=$ even, the
   restriction $\varphi|_L$ reverses orientation on $L$. By
   Theorem~\ref{t:phi-action},
   $\varphi_{\Lambda}(\widetilde{\sigma}_L) = -\widetilde{\sigma}_L$.
   Thus the corollary would follow if we show that $\varphi_{\Lambda}
   = \textnormal{id}$. To show the latter we need to prove that the
   map induced by $\varphi$ on homology $\varphi^{(M,L)}_*: H_2(M, L)
   \longrightarrow H_2(M, L)$ is the identity.

   Assume first that $n>2$. Then the map induced by inclusion $H_2(M)
   \to H_2(M,L)$ is an isomorphism. Moreover, for every $A \in H_2(M)$
   we can find a a cycle $C$ representing $A$ which lies in the
   complement of the support of $\varphi$. This shows that
   $\varphi^{M}_*(A)=A$ hence $\varphi^{(M,L)}_* = \textnormal{id}$.

   Assume now that $n=2$. Then we have $H_2(M,L) \cong
   H_2(M)/\mathbb{Z}[L]$. By the Picard-Lefschetz formula, the action
   of $\varphi^{M}_*$ on $H_2(M)$ is given by:
   $$\varphi^{M}_*(A) = A + \#(A \cdot [L])[L].$$ It immediately follows that
   $\varphi^{(M,L)}_*: H_2(M,L) \longrightarrow H_2(M,L)$ is trivial.
\end{proof}

\subsection{Other rings of interest} \cntrsb \label{sb:other-rings}
The results in this section continue to hold if we replace the ring
$\widetilde{\Lambda}^+$ by any $\widetilde{\Lambda}^+$-algebra
$\mathcal{R}$ (graded or not). See Section 2.1.2
of~\cite{Bi-Co:rigidity} for the precise definitions (in the graded
case). Such a structure is defined e.g. by specifying a ring
homomorphism $\eta: \widetilde{\Lambda}^+ \longrightarrow
\mathcal{R}$. The most natural examples are:
\begin{enumerate}
  \item $\mathcal{R} = \mathbb{Z}$, $\mathbb{Q}$ or $\mathbb{C}$,
   where $\eta(T^A) = 1$.
  \item $\mathcal{R} = \mathbb{Z}[t^{-1}, t]$, where $\eta(T^{A}) =
   t^{\mu(A)/N_L}$.
  \item $\mathcal{R} = \mathbb{C}$, with $\eta(T^A) = \rho(A)$, where
   $\rho: H_2^D \longrightarrow \mathbb{C}^*$ is a given group
   homomorphism. This is sometime referred to as {\em twisted
     coefficients}.
  \item $\mathcal{R} = \Lambda_{\textnormal{Nov}}$ is the Novikov
   ring (say in the variable $u$), and $\eta(T^A) = u^{\omega(A)}$.
  \item Combinations of~(3) with any of the other possibilities.
  \item \label{i:rings-quotient} $\mathcal{R}$ is defined similarly to
   $\widetilde{\Lambda}^+$ but instead of taking powers $T^A$ of with
   $A \in H_2^D$ we take $A \in H_2^D/K$, where $K \subset \ker \mu$.
   See Remark~\ref{r:cob-full-ring} for such an example.  (Of course
   we can take quotients by a subgroup $K \subset H_2^D$ with $\mu|_K
   \neq 0$. Then we can still define an
   $\widetilde{\Lambda}^+$-algebra $\mathcal{R}$ by taking all linear
   combinations of $T^A$ with $A \in H_2^D/K$.)
\end{enumerate}
In all cases the Lagrangian cubic equation will hold with coefficients
in $\mathcal{R}$ and the coefficients $\sigma^{\mathcal{R}}_L,
\tau^{\mathcal{R}}_L$ and discriminant $\Delta^{\mathcal{R}}_L$ will
now be elements of $\mathcal{R}$.  Moreover if $\eta:
\widetilde{\Lambda}^+ \to \mathcal{R}$ is the ring homomorphism
defining the $\widetilde{\Lambda}^+$-algebra structure on
$\mathcal{R}$ then $\eta$ induces ring homomorphisms $QH(L;
\widetilde{\Lambda}^+) \longrightarrow QH(L; \mathcal{R})$ and
$\eta_{Q}: QH(M; \widetilde{\Lambda}^+) \longrightarrow QH(M;
\mathcal{R})$. Applying $\eta_Q$ to the cubic
equation~\eqref{eq:cubic-L-full-ring} we obtain the cubic equation
over $\mathcal{R}$. Similarly
$$\eta(\widetilde{\sigma}_L) =
\sigma^{\mathcal{R}}_L,\quad \eta(\widetilde{\tau}_L) =
\tau^{\mathcal{R}}_L,\quad \eta (\widetilde{\Delta}_L) =
\Delta^{\mathcal{R}}_L.$$ Of course if we take $\mathcal{R} =
\mathbb{Z}$ or $\mathbb{Q}$ with $\eta(T^A) = 1$ then $\eta_Q$ sends
equation~\eqref{eq:cubic-L-full-ring} to the original cubic
equation~\eqref{eq:cubic-L} with $q = 1$ and
$\eta(\widetilde{\sigma}_L) = \sigma_L$, $\eta(\widetilde{\tau}_L) =
\tau_L$, $\eta(\widetilde{\Delta}_L) = \Delta_L$.

\begin{rem} \label{r:cob-full-ring} Analogues of Theorem~\ref{t:cob}
   and Corollary~\ref{c:del_1=del_2} should carry over to the present
   setting if we replace $\widetilde{\Lambda}^+$ by the
   $\widetilde{\Lambda}^+$-algebra $\mathcal{R}$ defined as in
   point~\eqref{i:rings-quotient} of the above list where we quotient
   $H_2^D$ by the subgroup $K = \ker \bigl( H^D_2(M,\partial V)
   \longrightarrow H_2(\mathbb{R}^2 \times M, V) \bigr)$.
\end{rem}

\subsection{Examples revisited} \cntrsb \label{sb:examples-revisited}

Here we briefly present the outcome of the calculation of our
invariants $\widetilde{\tau_L}$ and $\widetilde{\Delta}_L$ for
Lagrangian spheres on blow-ups of ${\mathbb{C}}P^2$ at $2 \leq k \leq
6$ points. (As for $\widetilde{\sigma}_L$, recall that it vanishes
when $L$ is a sphere.) We use similar notation as
in~\S\ref{sbsb:intro-exp-blcp2}.  For simplicity we denote by $u \in
H_4(M_k)$ the fundamental class viewed as the unity of $QH(M_k)$. As
before we appeal to~\cite{Cra-Mir:QH} for the calculation of the
quantum homology of the ambient manifolds. Since the explicit
calculations in $QH(M_k)$ turn out to be very lengthy we often omit
the details and present only the end results (full details can be
found in~\cite{Mem:PhD-thesis}).  We recall again that in $QH(M;
\widetilde{\Gamma}^+)$ the quantum variables are denoted now by
$S^{A}$ where $A \in H_2^S$.
\subsubsection{2-point blow-up of $\mathbb{C}P^2$}
$QH(M_2; \widetilde{\Gamma}^+)$ has the following ring structure:
\begin{align*}
   & p * p  =  H S^H + uS^{2H - E_1- E_2} \\
   & p * H  =  (H - E_1)S^{H-E_1} + (H - E_2)S^{H - E_2} + u S^H \\
   & p * E_1  =  (H - E_1) S^{H - E_1} \\
   & p * E_2 = (H - E_2) S^{H - E_2} \\
   & H*H = p + (H - E_1 - E_2) S^{H - E_1 - E_2} +
   u(S^{H - E_1} + S^{H - E_2} ) \\
   & H * E_1 =  (H - E_1 - E_2) S^{H - E_1 - E_2} + u S^{H - E_1} \\
   & H * E_2  =  (H - E_1 - E_2) S^{H - E_1 - E_2} + u S^{H - E_2} \\
   & E_1 * E_1 = -p + (H - E_1 - E_2) S^{H - E_1 - E_2} + E_1
   S^{E_1} + u S^{H - E_1} \\
   & E_2 * E_2 = -p + (H - E_1 - E_2) S^{H - E_1 - E_2} + E_2
   S^{E_2} + u S^{H - E_2} \\
   & E_1 * E_2 = (H - E_1 - E_2) S^{H - E_1 - E_2}.
\end{align*}

Let $L \subset M_2$ be a Lagrangian sphere in the class $[L]=E_1-E_2$.
Then $H_2^D = H_2(M,L) \cong H_2(M) / H_2(L)$ and as a basis for
$H_2^D$ we can choose $\{H, E\}$, where $E$ stands for the image of
both $E_1$ and $E_2$ in $H_2(M) / H_2(L)$. (Thus in
$\widetilde{\Lambda}^+$ we have $S^{E_1} = S^{E_2} = T^{E}$.)

A straightforward calculation gives:
$$(E_1 - E_2)^{*3} = (T^{2E} + 4 T^{H - E}) (E_1 - E_2), \quad
\widetilde{\Delta}_L = 4\widetilde{\tau}_L = T^{2E} + 4 T^{H}.$$

\subsubsection{3-point blow-up of $\mathbb{C}P^2$}
The multiplication table for $QH(M_3; \widetilde{\Gamma}^+)$ is rather
long hence we omit it here (see~\cite{Mem:PhD-thesis} for these
details).

Consider first Lagrangian spheres $L \subset M_3$ in the class $[L] =
E_1 - E_2$.  We choose $\{H, E, E_3\}$ for a basis for $H_2^D$ where
$E$ stands for the image of both of $E_1$ and $E_2$ in $H_2^D$. A
straightforward calculation using the Lagrangian cubic equation gives
$$\widetilde{\Delta}_L = 4 \widetilde{\tau}_L =
4 T^{H - E} + T^{2E} - 2 T^{H - E_3} + T^{2H - 2E - 2 E_3}.$$

As explained in Remark~\ref{r:1-point}, we expect that there exist
Lagrangian spheres $L_1$, $L_2$ with $[L_1] =E_1 - E_2, [L_2] = E_2 -
E_3$ such that $L_1$ and $L_2$ intersect transversely at exactly one
point. By Remark~\ref{r:cob-full-ring} we should have
$$\widetilde{\Delta}_{L_1} = \widetilde{\Delta}_{L_2} =
\textnormal{perfect square},$$ if we replace the ring
$\widetilde{\Lambda}^+$ by a quotient of it where $T^{E_1}, T^{E_2},
T^{E_3}$ are all identified. The discriminant of both of $L_1$ and
$L_2$ (which now denote $\widetilde{\Delta}'$) becomes in this
setting:
$$\widetilde{\Delta}' =
2 T^{H - E} + T^{2E} + T^{2H - 4E} = (T^{H-2E} + T^E)^2,$$ where we
have written here $T^{E}$ for the $T^{E_i}$'s.  Similar calculations
should apply to the examples discussed
in~\S\ref{sbsb:M_4-full-ring}~--~\S\ref{sbsb:M_6-full-ring}.

Next we consider Lagrangian $L \subset M_3$ with $[L] = H - E_1 - E_2
- E_3$. We work with the basis $\{E_1, E_2, E_3\}$ for $H_2^D$. Direct
calculation gives
$$\widetilde{\Delta}_L = 4\widetilde{\tau}_L = T^{2 E_1} + T^{2 E_2} +
T^{2 E_3} - 2 T^{E_1 + E_2} - 2 T^{E_1 + E_3} - 2 T^{E_2 + E_3}.$$

\subsubsection{4-point blow-up of $\mathbb{C}P^2$}
\label{sbsb:M_4-full-ring} Consider Lagrangian spheres in the class
$[L] = E_1 - E_2$ and work with the basis $\{H, E, E_3, E_4\}$, where
$E = [E_1]=[E_2] \in H_2^D$. Omitting the details of a rather long
calculation we obtain:
$$\widetilde{\Delta}_L = 4 \widetilde{\tau}_L = T^{2E} + 4 T^{H - E} -
2T^{H - E_3} - 2T^{H - E_4} + T^{2H - 2E - 2E_3} +T^{2H - 2E - 2E_4} -
2T^{2H - 2E - E_3 - E_4}.$$

For Lagrangian spheres in the class $[L] = H - E_1 - E_2 - E_3$ we
obtain: $$\widetilde{\Delta}_L = 4 \widetilde{\tau}_L = T^{2E_1} +
T^{2E_2} + T^{2E_3} - 2T^{E_1+E_2} - 2T^{E_1+E_3} - 2T^{E_2+E_3} + 4
T^{E_1+E_2+E_3-E_4},$$ where we have worked here with the basis
$\{E_1, E_2, E_3, E_4\}$ for $H_2^D$.

\subsubsection{5-point blow-up of $\mathbb{C}P^2$}
Consider Lagrangian spheres in the class $[L] = E_1 - E_2$ and work
with the basis $\{H, E, E_3, E_4, E_5\}$, where $E = [E_1]=[E_2] \in
H_2^D$. Omitting the details of a rather long calculation we obtain:

\begin{align*}
   \widetilde{\Delta}_L = 4 \widetilde{\tau}_L = \, & T^{2E} + 4 T^{H
     - E} -
   2T^{H - E_3} - 2T^{H - E_4} - 2T^{H - E_5} \\
   & + T^{2H - 2E - 2E_3} + T^{2H - 2E - 2E_4} + T^{2H - 2E - 2E_5}  \\
   & - 2T^{2H - 2E - E_3 - E_4} - 2T^{2H - 2E - E_3 - E_5} -
   2T^{2H - 2E - E_4 - E_5} \\
   & + 4 T^{2H - E - E_3 - E_4 - E_5}.
\end{align*}

Consider now a Lagrangian sphere in the class $[L] = H - E_1 - E_2 -
E_3$. We work with the basis $\{E_1, E_2, E_3, E_4, E_5\}$ for
$H_2^D$. We obtain:
\begin{align*}
   \widetilde{\Delta}_L = 4 \widetilde{\tau}_L = \, & T^{2E_1} +
   T^{2E_2} + T^{2E_3} -
   2T^{E_1+E_2} - 2T^{E_1+E_3} - 2T^{E_2+E_3} \\
   & + 4T^{E_1 + E_2 + E_3 - E_4} + 4T^{E_1 + E_2 + E_3 - E_5} +
   T^{2(E_1 + E_2 + E_3 - E_4 - E_5)} \\
   & - 2 T^{2E_1 + E_2 + E_3 - E_4 - E_5} - 2 T^{E_1 + 2E_2 + E_3 -
     E_4 - E_5} - 2 T^{E_1 + E_2 + 2E_3 - E_4 - E_5}.
\end{align*}

\subsubsection{6-point blow-up of $\mathbb{C}P^2$}
\label{sbsb:M_6-full-ring} Due to the complexity of the calculation we
restrict here to Lagrangians in the class $[L] = E_1 - E_2$. We work
with the basis $\{H, E, E_3, E_4, E_5, E_5, E_6\}$ for $H_2^D$, where
$E = [E_1] = [E_2]$.

\begin{align*}
   \widetilde{\Delta}_L = \, & T^{2E} + 4 T^{H - E} - 2T^{H - E_3} -
   2T^{H - E_4} - 2T^{H - E_5} - 2T^{H - E_6} \\
   & + T^{2H - 2E - 2E_3} +T^{2H - 2E - 2E_4} + T^{2H - 2E - 2E_5} +
   T^{2H - 2E - 2E_6} \\
   & - 2T^{2H - 2E - E_3 - E_4} - 2T^{2H - 2E - E_3 - E_5} -
   2T^{2H - 2E - E_3 - E_6} \\
   & - 2T^{2H - 2E - E_4 - E_5} - 2T^{2H - 2E - E_4 - E_6} -
   2T^{2H - 2E - E_5 - E_6} \\
   & -2 T^{2H - E_3 - E_4 - E_5 - E_6} + 4 T^{2H - E - E_3 - E_4 -
     E_5} +
   4 T^{2H - E - E_3 - E_4 - E_6}  \\
   & + 4 T^{2H - E - E_3 - E_5 - E_6} +
   4 T^{2H - E - E_4 - E_5 - E_6} \\
   & - 2 T^{3H - 2E - 2E_3 - E_4 - E_5 - E_6} -
   2 T^{3H - 2E - E_3 - 2E_4 - E_5 - E_6} \\
   & - 2 T^{3H - 2E - E_3 - E_4 - 2E_5 - E_6} -
   2 T^{3H - 2E - E_3 - E_4 - E_5 - 2E_6} \\
   & + 4 T^{3H - 3E - E_3 - E_4 - E_5 - E_6} + T^{4H -2E - 2E_3 -
     2E_4 - 2E_5 - 2E_6}
\end{align*}


\section{Relations to enumerative geometry of holomorphic disks}
\cntrs \label{s:enum}

Let $L^n \subset M^{2n}$ be an $n$-dimensional oriented Lagrangian
sphere in a monotone symplectic manifold $M$ with $n=$~even and $C_M =
\tfrac{n}{2}$. Note that $L$ satisfies Assumption~$\mathscr{L}$ hence
we can define its discriminant $\Delta_L \in \mathbb{Z}$ by the recipe
in~\S\ref{sb:discr-intro} or more generally $\widetilde{\Delta}_L \in
\widetilde{\Lambda}^+$ as described in~\S\ref{s:coeffs}.

The purpose of this section is to give an interpretation of the
discriminant in terms of enumeration of holomorphic disks with
boundary on $L$. A related previous result was established
in~\cite{Bi-Co:lagtop} for $2$-dimensional Lagrangian tori and the
same arguments from that paper easily generalize to our setting.

We will use below the notation from~\S\ref{s:coeffs}. Let $A \in
H_2^D$ and $J$ an almost complex structure compatible with the
symplectic structure of $M$. Denote by $\mathcal{M}_p(A, J)$ the space
of simple $J$-holomorphic disks with boundary on $L$ in the class $A$
and with $p$ marked points on the boundary (the space is defined
modulo parametrization by the group $Aut(D) \cong PSL(2,\mathbb{R})$
of biholomorphisms of the disk $D$. See Section~A.1.11
in~\cite{Bi-Co:lagtop} for the precise definitions). Denote by $ev_i:
\mathcal{M}_p(A, J) \longrightarrow L$ the evaluation at the $i$'th
marked point, where $1 \leq i \leq p$.

Fix three points $P, Q, R \in L$. Choose an oriented smooth path
$\overrightarrow{PQ}$ in $L$ starting at $P$ and ending at $Q$.
Similarly choose another two oriented paths $\overrightarrow{QR}$ and
$\overrightarrow{RP}$.

Let $A \in H_2^D$ with $\mu(A) = n$. Define $n_P(A) \in \mathbb{Z}$ to
be the number of $J$-holomorphic disks in the class $A$ whose boundaries
pass through both the path $\overrightarrow{QR}$ and the point $P$. In
other words we count the number of disks $u: (D, \partial D)
\longrightarrow (M,L)$ in the class $A$ with two marked points $z_1,
z_2 \in \partial D$ such that $u(z_1) \in \overrightarrow{QR}$ and
$u(z_2) = P$. (The disks with marked points $(u, z_1, z_2)$ are
considered modulo parametrization by $Aut(D)$ of course.) Standard
arguments show that for a generic choice of $J$ the number $n_P(A)$ is
finite.

The count $n_P(A)$ should take into account the orientations of all
the spaces involved. To this end we will use here the orientation
conventions from~\cite{Bi-Co:lagtop} and describe $n_P(A)$ via a fiber
product.  More precisely we use the spin structure on $L$ to orient
$\mathcal{M}_2(A,J)$ and define: $$n_P(A) = \#
\bigl(\overrightarrow{QR} \times_L \mathcal{M}_2(A,J) \times_L \{P\}
\bigr),$$ where the left fiber product is defined using $ev_1$, the
right one using $ev_2$ and $\#$ stands for the total number of points
in an oriented finite set, counted with signs.

Similarly, set:
\begin{align*}
   & n_Q(A) := \# \bigl(\overrightarrow{RP} \times_L
   \mathcal{M}_2(A,J) \times_L  \{Q\} \bigr), \\
   & n_R(A) := \# \bigl(\overrightarrow{PQ} \times_L
   \mathcal{M}_2(A,J) \times_L \{R\} \bigr).
\end{align*}
Define now $$n_P := \sum n_P(A) T^A \in \widetilde{\Lambda}^+,$$ where
the sum runs over all $A \in H_2^D$ with $\mu(A)=n$. Similarly define
$n_Q, n_R \in \widetilde{\Lambda}^+$.

Next, let $B \in H_2^D$ with $\mu(B) = 2n$. We would like to count the
number of $J$-holomorphic disks in the class $B$ with boundary passing
through $P,Q,R$ (in this order!). The precise definition goes as
follows. Consider the map
$$ev_{1,2,3} = ev_1 \times ev_2 \times ev_3: \mathcal{M}_3(B,J)
\longrightarrow L \times L \times L.$$ Standard arguments imply that
for a generic choice of $J$, $(ev_{1,2,3})^{-1}(P, Q, R)$ is a finite
oriented set. Consider the number of points in that set, namely
define:
$$n_{PQR}(B) := \# (ev_{1,2,3})^{-1}(P,Q,R),$$
where the count takes orientations into account.  Finally define
$$n_{PQR} : = \sum n_{PQR}(B) T^B \in \widetilde{\Lambda}^+,$$where the
sum is taken over all classes $B \in H_2^D$ with $\mu(B) = 2n$.

We remark that the numbers $n_P(A)$ (as well as the element $n_P \in
\widetilde{\Lambda}^+$) are not invariant in the sense that they
depend on the choices of the points $P,Q,R$ and of $J$. The same
happens with $n_Q, n_R$ and presumably with $n_{PQR}$ too.

\begin{thm}[c.f. Theorem~6.2.2 in~\cite{Bi-Co:lagtop}]
   \label{t:enum}
   Let $L \subset M$ be as above. Then 
   \begin{equation} \label{eq:discr-enum} \widetilde{\Delta}_L= 4
      n_{PQR} + n_P^2 + n_Q^2 + n_R^2 - 2n_P n_Q - 2n_Q n_R - 2n_R
      n_P.
   \end{equation}
\end{thm}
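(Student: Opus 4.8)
The plan is to read off the two coefficients $\widetilde{\sigma}$ and $\widetilde{\tau}$ in the defining relation $x*x = \widetilde{\sigma}\,x + \widetilde{\tau}\,e_L$ in $QH_0(L;\widetilde{\Lambda}^+)$ — for a lift $x$ of $[\textnormal{point}]$ tailored to the three points $P,Q,R$ and the chosen paths — directly from the pearl (Lagrangian quantum) complex, and then to substitute them into $\widetilde{\Delta}_L = \widetilde{\sigma}^2 + 4\widetilde{\tau}$, which holds over $\widetilde{\Lambda}^+$ by the discussion in \S\ref{s:coeffs}. Note that here $N_L = 2C_M = n$, so $\nu = n/N_L = 1$ and all disk Maslov indices are multiples of $n$; this is what makes the bookkeeping clean. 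The argument is a line-by-line adaptation of the proof of Theorem~6.2.2 in~\cite{Bi-Co:lagtop} for Lagrangian tori, the only differences being the dimension $n$ and the fact that we retain the disk classes through the coefficient ring $\widetilde{\Lambda}^+$ rather than collapsing them.

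Concretely, I would fix a generic $\omega$-compatible $J$ together with a perfect Morse--Smale datum on $L\cong S^n$, realizing $P,Q,R$ as the minima of three small generic perturbations of a Morse function with one minimum and one maximum, and using $\overrightarrow{PQ},\overrightarrow{QR},\overrightarrow{RP}$ as the $1$-chains implementing the comparison maps between these three representatives. Orienting all pearl moduli spaces by means of the spin structure on $L$ exactly as in~\cite{Bi-Co:lagtop}, the chain-level product of the three point-cycles expands in the pearl basis of the output complex as $\widetilde{\sigma}\,x + \widetilde{\tau}\,e_L$. The pearl differential vanishes here for degree reasons, and the homological degree forces the number of disks entering each contributing configuration: thus $\widetilde{\sigma}\in\widetilde{\Lambda}^+_{-n}$ is a signed count of rigid pearly configurations carrying exactly one disk, necessarily of Maslov index $n$, while $\widetilde{\tau}\in\widetilde{\Lambda}^+_{-2n}$ is a signed count of configurations of total Maslov index $2n$, i.e.\ either a single Maslov-$2n$ disk or a two-level configuration glued out of two Maslov-$n$ disks.

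The heart of the matter is the oriented identification of these counts with the enumerative data in the statement. Carrying out the same dimension bookkeeping used in the paragraphs defining $n_P,n_Q,n_R$ and $n_{PQR}$, one finds that the Maslov-$n$ configurations are in oriented bijection with disks through one of the three paths and the opposite point, the three choices of which point is special producing the three summands, so that $\widetilde{\sigma} = \pm(n_P + n_Q + n_R)$. The single-Maslov-$2n$-disk configurations contribute precisely $n_{PQR}$; the rest of $\widetilde{\tau}$ comes from the two-level configurations, and a compactness-and-gluing analysis of the one-dimensional moduli space whose boundary points are such broken configurations — together with the propagation of the spin-induced orientations through the gluing, the step I expect to be the most delicate and which is identical to~\cite{Bi-Co:lagtop} — shows that they contribute $-(n_Pn_Q + n_Qn_R + n_Rn_P)$, whence $\widetilde{\tau} = n_{PQR} - n_Pn_Q - n_Qn_R - n_Rn_P$. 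The relative signs and the normalization are pinned down by the transformation rules $\widetilde{\sigma}(x+\gamma e_L) = \widetilde{\sigma}(x) + 2\gamma$ and $\widetilde{\tau}(x+\gamma e_L) = \widetilde{\tau}(x) - \widetilde{\sigma}(x)\gamma - \gamma^2$ from \S\ref{sbsb:discr-well-def}, together with the integrality of $\widetilde{\tau}$.

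Substituting these values gives
\[
\widetilde{\Delta}_L = \widetilde{\sigma}^2 + 4\widetilde{\tau} = (n_P + n_Q + n_R)^2 + 4n_{PQR} - 4(n_Pn_Q + n_Qn_R + n_Rn_P),
\]
which upon expanding the square is exactly the claimed identity~\eqref{eq:discr-enum}. Finally, applying the ring map $\widetilde{\Lambda}^+\to\mathbb{Z}$, $T^A\mapsto 1$, recovers the corresponding integer identity for $\Delta_L$ and the counts obtained by forgetting the disk classes, exactly as in the torus case. The remaining work is thus entirely the verification of the orientation conventions and the gluing signs, for which I would cite~\cite{Bi-Co:lagtop} since nothing new is needed beyond its dimension $2$ treatment.
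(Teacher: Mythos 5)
Your proposal follows exactly the route the paper intends: the paper omits the proof of Theorem~\ref{t:enum} entirely, deferring to the proof of Theorem~6.2.2 in~\cite{Bi-Co:lagtop}, and your sketch is a faithful adaptation of that argument --- reading off $\widetilde{\sigma}=\pm(n_P+n_Q+n_R)$ and $\widetilde{\tau}=n_{PQR}-n_Pn_Q-n_Qn_R-n_Rn_P$ at the pearl-chain level for a lift of the point class and substituting into $\widetilde{\Delta}_L=\widetilde{\sigma}^2+4\widetilde{\tau}$, which indeed expands to~\eqref{eq:discr-enum}. The degree bookkeeping (here $N_L=n$, so $\widetilde{\sigma}$ and $\widetilde{\tau}$ sit in degrees $-n$ and $-2n$ and are carried by Maslov-$n$ and Maslov-$2n$ configurations respectively) and the final algebra check out, and, like the paper itself, you appropriately defer the orientation and gluing-sign analysis to~\cite{Bi-Co:lagtop}.
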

We omit the proof since it is a straightforward generalization of the
proof of the analogous theorem in~\cite{Bi-Co:lagtop} (see
Section~6.2.3 in that paper).

In view of the Lagrangian cubic equation~\eqref{eq:cubic-L-full-ring}
from page~\pageref{eq:cubic-L-full-ring} and
Corollary~\ref{c:sig_L-sphere} we can calculate the right-hand side
of~\eqref{eq:discr-enum} via the ambient quantum homology of $M$.

Note that if we choose the points $P,Q,R$ in specific positions
formula~\eqref{eq:discr-enum} might become simpler. For example, if we
fix the point $P$ then for a suitable (yet generic) choice of the
points $Q$ and $R$ we can make $n_P = 0$.
The formula then becomes $\widetilde{\Delta}_L= 4 n_{PQR} + (n_R-n_Q)^2$.

\begin{rem} \label{r:enum-full-ring} In contrast to
   Theorem~\ref{t:enum} the analogous statement
   from~\cite{Bi-Co:lagtop} (Theorem~6.2.2 in that paper) for
   Lagrangian tori does not work over $\widetilde{\Lambda}^+$. The
   reason is that Lagrangian tori are often not wide over
   $\widetilde{\Lambda}^+$ in the sense that for such Lagrangians
   $QH_*(L; \widetilde{\Lambda}^+)$ might not be isomorphic to $H_*(L;
   \widetilde{\Lambda}^+)$. For this reason Theorem~6.2.2
   in~\cite{Bi-Co:lagtop} is stated over the variety of
   representations $\rho: H_2^D \to \mathbb{C}^*$ for which the
   Lagrangian quantum homology $QH_*(L;\Lambda^{\rho})$ with $\rho$-twisted
   coefficients is isomorphic to $H_*(L)$. In contrast, if $L$ is an
   even dimensional Lagrangian sphere then we always have $QH_*(L;
   \widetilde{\Lambda}^+) \cong H_*(L; \widetilde{\Lambda}^+)$ (though
   possibly not in a canonical way).
\end{rem}

\section{What happens in the non-monotone case} \cntrs
\label{s:non-monotone}

Here we briefly outline how to extend, in certain situations, part of
the results of the paper to non-monotone Lagrangians.

Let $L^n \subset M^{2n}$ be a Lagrangian submanifold, which is not
necessarily monotone. Under such general assumptions, the Lagrangian
Floer and Lagrangian quantum homologies might not be well defined, at
least not in a straightforward way. There are several problems with the
definition.  The main one has to do with transversality related to
spaces of pseudo-holomorphic disks which cannot be controlled easily
(see~\cite{FO3:book-vol1, FO3:book-vol2} for a sophisticated general
approach to deal with this problem). The other problem (which is very
much related to the first one) comes from bubbling of holomorphic
disks with non-positive Maslov index. This leads to complications in
the algebraic formalism of Lagrangian Floer theory.

Nevertheless, the theory does work sufficiently well in dimension $4$
and we can still push some of our results to this case. Henceforth we
assume that $\dim M = 2n = 4$. We denote the symplectic structure of
$M$ by $\omega$. For simplicity assume that $L$ is a Lagrangian
sphere.  We fix for the rest of the section an orientation and spin structure on
$L$.

We first introduce the coefficient ring $\widetilde{\Lambda}^+_{nov}$
which is a hybrid between the Novikov ring and
$\widetilde{\Lambda}^+$. More precisely, we define
$\widetilde{\Lambda}^+_{nov}$ to be the set of all elements $p(T)$ of
the form $$p(T) = a_0 + \sum_A a_A T^A, \quad a_0, a_A \in
\mathbb{Z},$$ satisfying the following conditions. The sum is allowed
to be infinite (in contrast to $\widetilde{\Lambda}^+$) and is taken
over all $A \in H_2^D(M,L)$ satisfying both $\mu(A)>0$ and
$\omega(A)>0$. In addition we require that for every $S \in
\mathbb{R}$ the number of non-trivial coefficients $a_A \neq 0$ in
$p(T)$ with $\omega(A) < S$ is finite. It is easy to see that
$\widetilde{\Lambda}^+_{nov}$ is a commutative ring with respect to
the usual operations. We endow $\widetilde{\Lambda}^+_{nov}$ with the
same grading as $\widetilde{\Lambda}^+$, i.e. $|T^A| = -\mu(A)$.

Similarly to the monotone case, we define the minimal Chern number
$C_M$ of $(M, \omega)$ as follows. Let $H_2^S = \textnormal{image\,}
(\pi_2(M) \longrightarrow H_2(M))$ be the image of the Hurewicz
homomorphism.  Define: $C_M = \min \bigl\{ \langle c_1, A \rangle \mid
A \in H_2^S, \, \langle c_1, A \rangle >0, \; \langle [\omega], A
\rangle > 0 \bigr\}$.

The following version of Theorem~\ref{t:cubic-eq-sphere} continues to
hold for all Lagrangian $2$-spheres, whether monotone or not, provided
we work over the ring $\widetilde{\Lambda}^+_{nov}$ \cmred{in 
$QH(M)$.}
\begin{thm} \label{t:non-monotone} Let $L^2 \subset M^4$ be a
   Lagrangian $2$-sphere (without any monotonicity assumptions).  Then
   there exists $\widetilde{\gamma}_L \in \widetilde{\Lambda}^+_{nov}$
   such that $[L]^{*3} = \widetilde{\gamma}_L [L]$. If $C_M = 2$ then
   $\widetilde{\gamma}_L$ is divisible by $4$. Moreover, all the
   calculations made in~\S\ref{sb:examples-revisited} continue to hold
   without any changes in this setting.
\end{thm}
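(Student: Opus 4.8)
The plan is to reproduce, in real dimension $4$, the proof of Theorem~\ref{t:cubic-eq-sphere-gnrl} (in the form given for the ring $\widetilde{\Lambda}^+$ in~\S\ref{s:coeffs}) together with the orientation‑reversal trick of Corollary~\ref{c:sig_L-sphere}, systematically replacing the coefficient ring by $\widetilde{\Lambda}^+_{nov}$. The single genuinely new ingredient is foundational: one must first know that for a Lagrangian $2$‑sphere $L\subset M^4$ the pearl complex of $L$ with coefficients in $\widetilde{\Lambda}^+_{nov}$ is well defined and supports all the structures of~\S\ref{s:floer-setting} --- the quantum product on $QH(L;\widetilde{\Lambda}^+_{nov})$, its structure as a module over $QH(M;\widetilde{\Lambda}^+_{nov})$, the quantum inclusion $i_L$ with $i_L(e_L)=[L]$, the augmentation $\widetilde{\epsilon}_L$, and Poincar\'e duality (in particular the surjectivity of $\widetilde{\epsilon}_L$). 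The usual obstructions --- transversality for moduli of $J$‑holomorphic disks and bubbling of disks of non‑positive Maslov index --- are harmless when $\dim L=2$: for generic $J$ a simple $J$‑holomorphic disk with boundary on $L$ lies in an unparametrized moduli space of dimension $\mu-1$, and a simple $J$‑holomorphic sphere in a class $A$ in one of dimension $2\langle c_1,A\rangle-6$, so disks of Maslov index $\leq 0$ and spheres of Chern number $\leq 0$ do not occur, and Lazzarini's decomposition of disks together with the Novikov filtration then control the remaining compactness and gluing exactly as in the monotone theory (compare~\cite{FO3:book-vol1, FO3:book-vol2, Bi-Co:lagtop}).

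Granting this, choose a perfect Morse function on $S^2$. The pearl differential vanishes for degree reasons, so $QH_*(L;\widetilde{\Lambda}^+_{nov})$ is the free graded $\widetilde{\Lambda}^+_{nov}$‑module of rank $2$ on the point class $p$ (degree $0$) and the unit $e_L$ (degree $2$); since $\widetilde{\epsilon}_L$ is surjective and annihilates $e_L$ for degree reasons, one obtains the exact analogue of~\eqref{eq:quad-alg-QHn-2},
\[
0 \longrightarrow \widetilde{\Lambda}^+_{nov}\,e_L \longrightarrow QH_*(L;\widetilde{\Lambda}^+_{nov}) \xrightarrow{\;\;\widetilde{\epsilon}_L\;\;} H_0(L;\widetilde{\Lambda}^+_{nov}) \longrightarrow 0.
\]
Since $[L]\cdot[L]=\varepsilon\chi=-2\neq 0$, I would then run the argument in the proof of Theorem~\ref{t:cubic_eq-ccl} with $c=[L]$ and $\xi=-2$: put $x:=-\tfrac12\,[L]*e_L$, so that $\widetilde{\epsilon}_L(x)=[\textnormal{point}]$ and $\{x,e_L\}$ is a basis of $QH_*(L;\widetilde{\Lambda}^+_{nov}\otimes\mathbb{Q})$; write $x*x=\widetilde{\sigma}_L\,x+\widetilde{\tau}_L\,e_L$ and apply $i_L$, using $i_L(x*x)=\tfrac14[L]^{*3}$, $i_L(x)=-\tfrac12[L]^{*2}$, $i_L(e_L)=[L]$ and~\eqref{eq:alg-identity}. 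This produces the Novikov analogue of~\eqref{eq:cubic-L-full-ring}, and the integrality argument of Theorem~\ref{t:cubic_eq-ccl} (with the square‑free $\xi=-2$) gives $\widetilde{\sigma}_L\in\tfrac12\widetilde{\Lambda}^+_{nov}$, $\widetilde{\tau}_L\in\tfrac14\widetilde{\Lambda}^+_{nov}$. To kill the $[L]^{*2}$‑term I would invoke the Dehn twist $\varphi$ along $L$ as in Corollary~\ref{c:sig_L-sphere}: as $n=2$, $\varphi|_L$ reverses orientation, while the Picard--Lefschetz formula makes $\varphi$ act trivially on $H_2(M,L)=H_2(M)/\mathbb{Z}[L]$, so $\varphi_{\Lambda}=\textnormal{id}$, and Theorem~\ref{t:phi-action} then forces $\widetilde{\sigma}_L=0$. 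Hence $[L]^{*3}=\widetilde{\gamma}_L[L]$ with $\widetilde{\gamma}_L=\chi^2\widetilde{\tau}_L=4\widetilde{\tau}_L\in\widetilde{\Lambda}^+_{nov}$.

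For the divisibility, suppose $C_M=2$; since $\pi_1(L)=1$ the minimal Maslov number is $N_L=2C_M=4>2$, so $\widetilde{\Lambda}^+_{nov}$ has no homogeneous elements of degree $-2$, the degree‑$0$ component of $QH_0(L;\widetilde{\Lambda}^+_{nov})$ is just $\mathbb{Z}p$, and $\widetilde{\epsilon}_L$ is an isomorphism there. Therefore $z:=[L]*e_L$ equals $-2p$, hence is divisible by $2$, so $z*z=([L]*[L])*e_L$ is divisible by $4$; as $i_L(z*z)=[L]^{*3}$ and $[L]$ is primitive and non‑torsion (because $[L]\cdot[L]=-2$), it follows that $\widetilde{\gamma}_L$ is divisible by $4$, i.e.\ $\widetilde{\tau}_L\in\widetilde{\Lambda}^+_{nov}$. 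Finally, the computations of~\S\ref{sb:examples-revisited} carry over verbatim: the entries of the multiplication tables of $QH(M_k;\widetilde{\Gamma}^+)$ used there are genus‑$0$ three‑point Gromov--Witten invariants of $M_k$, hence unchanged under symplectic deformation within the relevant family, so the same tables --- with the formal variables now read in the Novikov completion $\widetilde{\Gamma}^+_{nov}$ of $\widetilde{\Gamma}^+$ --- compute $QH(M_k;\widetilde{\Lambda}^+_{nov})$; since every identity in~\S\ref{sb:examples-revisited} is a finite sum, the natural ring maps $\widetilde{\Gamma}^+\to\widetilde{\Gamma}^+_{nov}$ and $\widetilde{\Lambda}^+\to\widetilde{\Lambda}^+_{nov}$ transport each computation, and the preceding steps identify the resulting coefficient of $[L]$ with $\widetilde{\gamma}_L=4\widetilde{\tau}_L=\widetilde{\Delta}_L$.

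The main obstacle is the foundational step: verifying that the full package of pearl‑homology structures for $L$ --- unit, module structure over the ambient quantum homology, quantum inclusion $i_L$, and the duality yielding the exact sequence above --- survives the loss of monotonicity in dimension $4$ over $\widetilde{\Lambda}^+_{nov}$. Everything after that is a mechanical transcription of arguments already carried out in~\S\ref{sb:prf-cubic-eq-S} and~\S\ref{s:coeffs}.
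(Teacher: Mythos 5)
Your proposal is correct and follows essentially the same route as the paper: establish that in dimension $4$ the pearl complex of $L$ over $\widetilde{\Lambda}^+_{nov}$ is well defined for a fixed generic $J$ (because disks of Maslov index $\leq 0$ and spheres of non-positive Chern number are generically absent), and then transcribe the monotone arguments of Theorem~\ref{t:cubic-eq-sphere-gnrl}, the Dehn-twist trick, and the divisibility argument. The one point the paper makes explicit that you leave implicit is that the ring $QH(L;\widetilde{\Lambda}^+_{nov};J)$ may genuinely depend on $J$ (continuation maps can fail across the walls $\mathcal{J}_{\mu\leq 0}$), which is harmless here precisely because your argument uses a single generic $J$ and the conclusion lives in the $J$-independent ambient quantum homology.
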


We will now outline the main points in the proof of the theorem,
paying attention to the main difficulties in the non-monotone case.

Recall that the proof of Theorem~\ref{t:cubic-eq-sphere} made use of
both the ambient quantum homology $QH(M)$ and the Lagrangian one
$QH(L)$, as well as the relations between them, e.g. the quantum
inclusion map $i_L : QH(L) \longrightarrow QH(M)$.

The ambient quantum homology $QH(M)$ can be defined (over
$\widetilde{\Lambda}^+_{nov}$) in the semi-positive case
(see~\cite{McD-Sa:jhol}) in a very similar way as in the monotone
case. This covers our case since $4$-dimensional symplectic manifolds
are always semi-positive. As for the Lagrangian quantum homology
things are less straightforward, and we explain the difficulties next.

Denote by $\mathcal{J}$ the space of almost complex structures
compatible with $\omega$. Then for generic $J \in \mathcal{J}$ there are no
non-constant $J$-holomorphic disks $u:(D, \partial D) \to (M, L)$ with
Maslov index $\mu(u) \leq 0$. This follows from the fact that the
spaces of such disks have negative virtual dimentions, together with
standard transversality arguments from the theory of
pseudo-holomorphic curves (see~\cite{McD-Sa:jhol, Laz:discs,
  Laz:decomp, Kw-Oh:discs}). From this it follows by the theory
from~\cite{Bi-Co:qrel-long, Bi-Co:rigidity} that for a generic choice
of $J$ (and other auxiliary data) the associated pearl complex is well
defined and its homology $QH(L;\widetilde{\Lambda}^+_{nov}; J)$
satisfies all the algebraic properties described in~\S\ref{sb:HF} as
long as we work with coefficients in $\widetilde{\Lambda}^+_{nov}$.
The reason to work over $\widetilde{\Lambda}^+_{nov}$ comes from the
fact that there might be infinitely many pearly trajectories
connecting two critical points that all contribute to the differential
of the pearl complex. However, for any given $0< S \in \mathbb{R}$ the
number of such trajectories with disks of total area bounded above by
$S$ is finite, and therefore the differential of the pearl complex is
well defined over $\widetilde{\Lambda}^+_{nov}$. A detailed account on
this approach to the pearl complex in dimension $4$ has been carried
out in~\cite{Cha:uniruling-dim-4}.

Since $L$ is an even dimensional sphere, for degree reasons
$QH(L;\widetilde{\Lambda}^+_{nov}; J)$ is isomorphic (possibly in a
non-canonical way) to the singular homology $H_*(L;
\widetilde{\Lambda}^+_{nov})$. However, it is not clear whether the
continuation maps $QH(L;\widetilde{\Lambda}^+_{nov}; J_0) \longrightarrow
QH(L;\widetilde{\Lambda}^+_{nov}; J_1)$ are well defined for every two
regular $J$'s, and moreover, it is a priori not clear whether the
quantum ring structure on $QH(L;\widetilde{\Lambda}^+_{nov}; J)$ is
independent of $J$.

To understand these problems better denote by $\mathcal{J}_{\mu \leq
  0} \subset \mathcal{J}$ the subspace of all $J$'s for which there
exists either a non-constant $J$-holomorphic disk with $\mu \leq 0$ or
a $J$-holomorphic rational curve with Chern number $\leq 0$.
\cmred{Roughly speaking the space $\mathcal{J}_{\mu \leq 0}$ has
  strata of codimension $1$ in $\mathcal{J}$.}  Denote by
$\mathcal{J}_{\mu>0} = \mathcal{J} \setminus \mathcal{J}_{\mu \leq 0}$
its complement. Let $J_0, J_1 \in \mathcal{J}_{\mu>0}$ be two regular
almost complex structures. If $J_0, J_1$ happen to belong to the same
path connected component of $\mathcal{J}_{\mu>0}$ then we have a
canonical isomorphism $QH(L;\widetilde{\Lambda}^+_{nov}; J_0)
\longrightarrow QH(L;\widetilde{\Lambda}^+_{nov}; J_1)$ which is in
fact a ring isomorphism.  However, for $J_0, J_1$ lying in different
path connected components of $\mathcal{J}_{\mu>0}$ this might not be
the case. The problem is that when joining $J_0$ with $J_1$ by a path
$\{J_t\}_{t \in [0,1]}$ there will be instances of $t$ where the path
goes through $\mathcal{J}_{\mu \leq 0}$, hence the spaces of pearly
trajectories used in defining the continuation maps might not be
compact due to bubbling of holomorphic disks with Maslov index $0$.
Under such circumstances ``wall crossing'' analysis is necessary in
order to try to rectify the situation.

Despite these difficulties, Theorem~\ref{t:non-monotone} still holds.
The point is that although the Lagrangian quantum homology does depend
on the choice of $J$, the ambient quantum homology
$QH(M;\widetilde{\Lambda}^+_{nov}; J)$ is independent of that choice.
Inspecting the proof of Theorem~\ref{t:cubic-eq-sphere} one can see
that the invariance of $QH(L;\widetilde{\Lambda}^+_{nov}; J)$ under changes
of $J$ does not play any role. The only important thing is that
$QH(M;\widetilde{\Lambda}^+_{nov};J)$ is independent of $J$ and that the
quantum inclusion map $i_L: QH(L;\widetilde{\Lambda}^+_{nov}; J)
\longrightarrow QH(M;\widetilde{\Lambda}^+_{nov}; J)$ is well defined and
satisfies the algebraic properties described in~\S\ref{sb:HF}.

The rest of the arguments proving Theorem~\ref{t:cubic-eq-sphere} go
through with mild modifications and yield
Theorem~\ref{t:non-monotone}. \qed

\begin{rem} \label{r:discr-determines-prod} Assume that $C_M = 1$.
   Change the ground ring from $\mathbb{Z}$ to $\mathbb{Q}$ and define
   $\widetilde{\Lambda}^+_{nov, \mathbb{Q}}$ in the same way as
   $\widetilde{\Lambda}^+_{nov}$ but over $\mathbb{Q}$.  It is easy to
   see that the discriminant $\widetilde{\Delta}_L =
   \widetilde{\gamma}_L \in \widetilde{\Lambda}^+_{nov}$ determines
   the isomorphism type of the ring $QH(L;\widetilde{\Lambda}^+_{nov,
     \mathbb{Q}}; J)$. Since the discriminant is independent of $J$ it
   follows that the ring isomorphism type of
   $QH(L;\widetilde{\Lambda}^+_{nov, \mathbb{Q}}; J)$ is in fact
   independent of $J$ too. However, as mentioned earlier, it is not
   clear if an isomorphism between the Lagrangian quantum homologies
   corresponding to $J$'s in different components of
   $\mathcal{J}_{\mu>0}$ can be realized via continuation maps.

   If $C_M = 2$ the situation is simpler. In this case there is no
   need to work over $\mathbb{Q}$, i.e. the isomorphism type of the
   Lagrangian quantum homology with coefficients in
   $\widetilde{\Lambda}^+_{nov}$ is determined by
   $\widetilde{\gamma}_L$.
\end{rem}

\appendix

\section{Calculations in Lagrangian quantum homology}
\label{s:app-calc} \cntrs

At several instances along the paper we have appealed to basic
techniques for calculating the Lagrangian quantum homology. The main
ingredient is a spectral sequence whose initial page is the singular
homology of a given Lagrangian and which converges to its quantum
homology. This is well known to specialists and most of the details
can be recollected from several references indicated below. For the
sake of readability we summarize below the main ingredients of this
method. We begin in~\S\ref{sb:spec-seq} with the general setup of the
spectral sequence and in~\S\ref{sb:HF-lag-spheres} specialize to the
case of Lagrangian spheres.

\subsection{A spectral sequence in Lagrangian quantum homology}
\label{sb:spec-seq} \cntrsb

This is a homological version of the spectral sequence that was
introduced in~\cite{Oh:spectral} and further elaborated
in~\cite{Bi:Nonintersections}, see
also~\cite{Bi-Co:Yasha-fest,Bi-Co:rigidity}.

Let $L \subset M$ be a monotone Lagrangian submanifold with minimal
Maslov number $N_L$ and denote $n = \dim L$. Let $K$ be a commutative
ring which will serve as the ground ring for the quantum homology. In
case the characteristic of $K$ is not $2$ we assume that $L$ is spin
(i.e.  endowed with a given spin structure).

Denote by $\Lambda_K = K[t^{-1},t]$ the ring of Laurent polynomials in
$t$, graded so that the degree of $t$ is $|t| = -N_L$. Let $f: L
\longrightarrow \mathbb{R}$ be a Morse function and fix in addition a
generic almost complex structure $J$, compatible with the symplectic
structure of $M$ and a generic Riemannian metric on $L$. With this
data fixed one can define the pearl complex $(\mathcal{C}, d)$ whose
homology $QH(L; \Lambda_K)$ is (by definition) the Lagrangian quantum
homology of $L$, and which turns out to be isomorphic to the
self-Floer homology of $L$ (See~\cite{Bi-Co:Yasha-fest,
  Bi-Co:rigidity, Bi-Co:qrel-long, Bi-Co:lagtop} for the foundations
of Lagrangian quantum homology.)

Consider now the graded free $K$-module $C$ whose basis is formed by
the critical points of $f$, where the degree $i$ part is generated by
the critical points of index $i$:
$$C_i := \bigoplus_{x \in \textnormal{Crit}_i(f)} Kx, \quad
C_* := \bigoplus_{i=0}^n C_i.$$ Morse
theory~\cite{Ba-Hu:Morse-homology-book, Au-Da:Morse-Floer-book,
  Au-Da:Morse-Floer-book-eng} gives rise to a differential
$\partial^{\textnormal{m}}: C_i \longrightarrow C_{i-1}$ on $C$ whose
homology $H_*(C,\partial^{\textnormal{m}})$ is canonically isomorphic
to the singular homology $H_*(L;K)$ of $L$.

Below it will be useful to write $\Lambda_K = \oplus_{i \in
  \mathbb{Z}} P_i$, where
\begin{equation*}
   P_i =
   \begin{cases}
      K t^{-i / N_L}, & \text{if } i \equiv
      0 \; (\mathrm{mod } \, N_L),\\
      0, & \text{otherwise}.
   \end{cases}
\end{equation*}

The pearl complex $(\mathcal{C}, d)$ is related to $C$ as follows.
Its underlying module is defined by $\mathcal{C}_* = C_* \otimes_K
\Lambda_K$, where the grading is induced from both factors in the
tensor product. Thus we have:
$$\mathcal{C}_l = \bigoplus_{k \in \mathbb{Z}} C_{l - k N_L} \otimes
P_{k N_L}, \quad \forall \, l \in \mathbb{Z}.$$

The differential $d$ can be written as a sum of $K$-linear operators
as follows:
\begin{equation} \label{eq:pearl-diff}
   d = \partial_0 \otimes 1 + \partial_1 \otimes t + \cdots +
   \partial_{\nu} \otimes t^{\nu},
\end{equation}
with $\partial_i: C_j \to C_{j + i N_L -1}$ and $\nu = \left[
   \tfrac{n+1}{N_L}\right]$.  Moreover, the first operator in this sum
coincides with the Morse differential, i.e.  $\partial_0 =
\partial^{\textnormal{m}}$. We refer the reader
to~\cite{Bi-Co:Yasha-fest, Bi-Co:rigidity, Bi-Co:qrel-long,
  Bi-Co:lagtop} for the precise definition of the operators
$\partial_i$. As far as this section is concerned, the only relevant
thing is the precise shift in grading for each $\partial_i$.

Consider now the following increasing filtration
$\mathcal{F}_{\bullet}\Lambda_K$ on $\Lambda_K$:
$$\mathcal{F}_p \Lambda_K :=
\Biggl\{ h(t) \in \Lambda_K \, \Big| \, h(t) = \sum_{-p \leq k} a_k
t^k\Biggr\} = \bigoplus_{j \leq p} P_{j N_L}.$$ This filtration
induces an increasing filtration on the chain complex $(\mathcal{C},
d)$ by setting $\mathcal{F}_p \mathcal{C} = \mathcal{C} \otimes
\mathcal{F}_p \Lambda_K$ or more specifically:
$$(\mathcal{F}_p \mathcal{C})_l = \bigoplus_{j \leq p} C_{l - j N_L}
\otimes P_{j N_L}, \quad \forall \, p, l \in \mathbb{Z}.$$ The fact
that the differential preserves the filtration follows
from~\eqref{eq:pearl-diff}. Note also that for degree reasons the
filtration $\mathcal{F}_{\bullet} \mathcal{C}$ is bounded.

According to standard spectral sequence
theory~\cite{Weibel:book-hom-alg} the filtration
$\mathcal{F}_{\bullet}\mathcal{C}$ induces a spectral sequence $\{
E^r_{p, q}, d^r \}_{_{r \geq 0}}$ which converges to $H_*(\mathcal{C},
d) = QH_*(L; \Lambda_K)$.

The following theorem is an obvious homological adaptation of
Theorem~5.2.A from~\cite{Bi:Nonintersections}.
\begin{thm} \label{t:spectral-seq} The spectral sequence $\{ E^r_{p,
     q}, d^r \}$ has the following properties:
   \begin{enumerate}
     \item[(1)] $E^0_{p,q} = C_{p + q - pN_L} \otimes P_{p N_L}$, $d^0
      = \partial_0 \otimes 1$;
     \item[(2)] $E^1_{p,q} = H_{p + q - pN_L}(L;K) \otimes P_{p N_L}$,
      $d^1 = [\partial_1] \otimes t$, where
      \[
      [\partial_1]: H_{p + q - pN_L}(L; K) \longrightarrow H_{p + q -1
        - (p-1) N_L}(L; K)
      \]
      is induced by the map $\partial_1$.
     \item[(3)] $\{ E^r_{p, q}, d^r \}$ collapses at the $\nu + 1$
      step, namely $d^r = 0$ for every $r \geq \nu + 1$ (hence we
      denote $E^{\infty}_{p, q} = E^r_{p, q}$ for $r \geq \nu + 1$).
      Moreover, the sequence converges to $QH_*(L; \Lambda_K)$. In
      particular, when $K$ is a field we have:
      \[
      \bigoplus_{p+q = l} E^{\infty}_{p, q} \cong QH_l(L; \Lambda_K)
      \; \; \forall \, l \in \mathbb{Z}.
      \]
   \end{enumerate}
\end{thm}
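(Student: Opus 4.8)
The plan is to treat $(\mathcal{C},d)$ as nothing more than a bounded filtered chain complex and to run the standard spectral sequence of a filtered complex (see~\cite{Weibel:book-hom-alg}), then to read off the first two pages from the explicit form~\eqref{eq:pearl-diff} of the pearl differential, $d=\partial_0\otimes 1+\partial_1\otimes t+\cdots+\partial_\nu\otimes t^\nu$, together with the grading shifts $\partial_i\colon C_j\to C_{j+iN_L-1}$ and the identity $\partial_0=\partial^{\textnormal{m}}$. Granting this structural input -- which is the only place the geometry of pearly trajectories enters, and which is established in~\cite{Bi-Co:Yasha-fest, Bi-Co:rigidity, Bi-Co:qrel-long, Bi-Co:lagtop} -- everything that follows is purely formal homological algebra, exactly parallel to~\cite[Theorem~5.2.A]{Bi:Nonintersections}.

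First I would pass to the associated graded. Since $\mathcal{F}_p\mathcal{C}/\mathcal{F}_{p-1}\mathcal{C}\cong C_*\otimes P_{pN_L}$ and $|t|=-N_L$ forces $P_{pN_L}=Kt^{-p}$ to sit in degree $pN_L$, an element of $C_m\otimes P_{pN_L}$ has total degree $m+pN_L$; hence in the usual homological indexing ($\ell=p+q$) one gets $E^0_{p,q}=C_{p+q-pN_L}\otimes P_{pN_L}$. Among the summands $\partial_i\otimes t^i$ of $d$, only $\partial_0\otimes 1$ preserves the filtration degree (each $\partial_i\otimes t^i$ with $i\ge 1$ sends filtration level $p$ strictly below $p$), so on $E^0$ the induced differential is $d^0=\partial_0\otimes 1=\partial^{\textnormal{m}}\otimes 1$; taking homology in the $q$-direction and using that $(C_*,\partial^{\textnormal{m}})$ computes $H_*(L;K)$ yields $E^1_{p,q}=H_{p+q-pN_L}(L;K)\otimes P_{pN_L}$. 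The page-one differential $d^1$ is induced by the part of $d$ that lowers the filtration degree by exactly $1$, namely $\partial_1\otimes t$, which on Morse homology descends to $[\partial_1]\otimes t$ and has the stated source and target via the shift $C_j\to C_{j+N_L-1}$. This establishes (1) and (2).

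For (3), convergence is immediate from boundedness: because $C_i=0$ for $i\notin[0,n]$, in each total degree $\ell$ the group $\mathcal{F}_p\mathcal{C}_\ell$ vanishes for $p\ll 0$ and stabilizes to $\mathcal{C}_\ell$ for $p\gg 0$, so the spectral sequence converges to $H_*(\mathcal{C},d)=QH_*(L;\Lambda_K)$; over a field $K$ the resulting finite filtration on $QH_\ell(L;\Lambda_K)$ splits and one obtains $\bigoplus_{p+q=\ell}E^\infty_{p,q}\cong QH_\ell(L;\Lambda_K)$. Collapse at page $\nu+1$ is a dimension count: $E^r_{p,q}$ is a subquotient of $H_{p+q-pN_L}(L;K)$ and $E^r_{p-r,q+r-1}$ of $H_{p+q-pN_L-1+rN_L}(L;K)$, so $d^r$ shifts singular degree by $rN_L-1$; since the homology of $L^n$ is concentrated in degrees $[0,n]$, a nonzero $d^r$ forces $rN_L-1\le n$, i.e. $r\le\bigl[\tfrac{n+1}{N_L}\bigr]=\nu$, whence $d^r=0$ for all $r\ge\nu+1$.

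The main obstacle, such as it is, lies not in this formalism -- which is routine -- but upstream: one must know that the pearl differential genuinely decomposes as in~\eqref{eq:pearl-diff}, with the indicated Maslov-index grading shifts and with zeroth term equal to the Morse differential. That is a bona fide geometric statement, encoding transversality and a compactness/gluing analysis of pearly configurations with a prescribed number of disk components, and here it should simply be quoted from the foundational papers~\cite{Bi-Co:rigidity, Bi-Co:qrel-long, Bi-Co:lagtop}. With it in hand, Theorem~\ref{t:spectral-seq} is the straightforward homological adaptation of~\cite[Theorem~5.2.A]{Bi:Nonintersections} announced in its statement.
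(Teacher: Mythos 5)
Your argument is correct and is precisely the ``obvious homological adaptation of Theorem~5.2.A of~\cite{Bi:Nonintersections}'' that the paper invokes without writing out: the degree bookkeeping for $E^0$, $E^1$, $d^1$, the boundedness of the filtration in each total degree, and the collapse bound $rN_L-1\le n$ all check out. You also correctly isolate the only non-formal input, namely the decomposition~\eqref{eq:pearl-diff} of the pearl differential with $\partial_0=\partial^{\textnormal{m}}$, which is quoted from the foundational references exactly as the paper does.
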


\subsection{Quantum homology of Lagrangian spheres}
\label{sb:HF-lag-spheres} \cntrsb

\begin{prop} \label{p:HF-lag-spheres} Let $L \subset M$ be an
   $n$-dimensional monotone Lagrangian submanifold which is a
   $\mathbb{Q}$-homology sphere. Then:
   \begin{enumerate}[(i)]
     \item If $n$ is even then $QH_*(L; \Lambda_{\mathbb{Q}}) \cong
      H_*(L; \mathbb{Q}) \otimes \Lambda_{\mathbb{Q}}$.
     \item Assume $n$ is odd. If $N_L \centernot| n + 1$ or $N_L \,|\,
      n + 1$ and $[L] \neq 0$, then $QH_*(L; \Lambda_{\mathbb{Q}})
      \cong H_*(L; \mathbb{Q}) \otimes \Lambda_{\mathbb{Q}}$. If $N_L
      \,|\, n + 1$ and $[L] = 0$, then $QH_*(L; \Lambda_{\mathbb{Q}})$
      is either 0 or isomorphic to $H_*(L; \mathbb{Q}) \otimes
      \Lambda_{\mathbb{Q}}$.
   \end{enumerate}
\end{prop}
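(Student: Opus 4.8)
The plan is to deduce everything from the spectral sequence $\{E^r_{p,q},d^r\}$ of Theorem~\ref{t:spectral-seq}, applied to $L$ over the field $K=\mathbb{Q}$. Since $L$ is a $\mathbb{Q}$-homology sphere we have $H_k(L;\mathbb{Q})=\mathbb{Q}$ for $k\in\{0,n\}$ and $H_k(L;\mathbb{Q})=0$ otherwise, so by part~(2) of that theorem the $E^1$-page is supported exactly on the two families of bidegrees $(p,q)$ with $p+q-pN_L=0$ (the \emph{point strata}, each a copy of $\mathbb{Q}\,[\textnormal{point}]\,t^{-p}$) and $p+q-pN_L=n$ (the \emph{$[L]$ strata}, each a copy of $\mathbb{Q}\,[L]\,t^{-p}$), indexed by $p\in\mathbb{Z}$; moreover $\bigoplus_{p,q}E^1_{p,q}\cong H_*(L;\mathbb{Q})\otimes\Lambda_{\mathbb{Q}}$. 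A routine bookkeeping shows that $d^r\colon E^r_{p,q}\to E^r_{p-r,q+r-1}$ raises the internal homology degree by exactly $rN_L-1$. Since $rN_L-1\geq N_L-1\geq 1>0$ and the only nonzero terms sit in homology degrees $0$ and $n$, the differential $d^r$ can be nonzero only if it sends a point stratum to an $[L]$ stratum, i.e.\ only if $rN_L-1=n$, equivalently $N_L\,|\,n+1$ and $r=r_0:=(n+1)/N_L$.

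First I would dispose of the cases in which every $d^r$ vanishes. For part~(i), $n$ even: since $H_n(L;\mathbb{Q})=\mathbb{Q}\neq 0$ the manifold $L$ is $\mathbb{Z}$-orientable, hence $N_L$ is even (orientable Lagrangians have even minimal Maslov number), so the even number $N_L$ cannot divide the odd number $n+1$; therefore $d^r=0$ for all $r$, the sequence collapses at $E^1$, and — $\mathbb{Q}$ being a field — $QH_*(L;\Lambda_{\mathbb{Q}})\cong E^1\cong H_*(L;\mathbb{Q})\otimes\Lambda_{\mathbb{Q}}$ as graded $\Lambda_{\mathbb{Q}}$-modules. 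The same collapse argument settles the subcase of part~(ii) with $N_L\centernot|\,n+1$.

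It remains to treat part~(ii) with $N_L\,|\,n+1$. By the degree count, $d^r=0$ for every $r\neq r_0$, so $E^{r_0}=E^1$ and the only possibly nonzero differential is $d^{r_0}$. Matching bidegrees one checks that $d^{r_0}$ carries the point stratum at level $p$ into the $[L]$ stratum at level $p-r_0$, and that as $p$ runs over $\mathbb{Z}$ these targets exhaust all the $[L]$ strata. The one step requiring care is to see that $d^{r_0}$ is given by a single scalar: because the pearl differential $d$ is $\Lambda_{\mathbb{Q}}$-linear, multiplication by $t$ is a chain automorphism of $\mathcal{C}$ that lowers filtration degree by exactly $1$, hence induces for each $r$ an isomorphism $E^r_{p,q}\xrightarrow{\;\sim\;}E^r_{p-1,q-N_L+1}$ commuting with $d^r$; iterating this identifies all the scalar components of $d^{r_0}$ with a single $c\in\mathbb{Q}$. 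If $c=0$ the sequence collapses at $E^1$ and $QH_*(L;\Lambda_{\mathbb{Q}})\cong H_*(L;\mathbb{Q})\otimes\Lambda_{\mathbb{Q}}$; if $c\neq 0$ then $d^{r_0}$ is an isomorphism from the direct sum of all point strata onto the direct sum of all $[L]$ strata, so $E^{\infty}=0$ and $QH_*(L;\Lambda_{\mathbb{Q}})=0$. Finally, when $[L]\neq 0\in H_n(M;\mathbb{Q})$ the module $QH_*(L;\Lambda_{\mathbb{Q}})$ cannot vanish, since otherwise the unit $e_L$ would be $0$ and $[L]=i_L(e_L)=0$; hence $c=0$ in that case, and $QH_*(L;\Lambda_{\mathbb{Q}})\cong H_*(L;\mathbb{Q})\otimes\Lambda_{\mathbb{Q}}$.

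The main obstacle I anticipate is precisely the verification that $d^{r_0}$ is a single scalar — that is, the $t$-equivariance of the spectral sequence together with the bidegree matching that makes $d^{r_0}$ ``diagonal'' between the two strata. Everything else reduces to degree bookkeeping on the $E^1$-page and the two standard inputs ``$L$ orientable $\Rightarrow N_L$ even'' and ``$i_L(e_L)=[L]$''.
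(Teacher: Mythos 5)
Your proof is correct and follows essentially the same route as the paper's: both run the spectral sequence of Theorem~\ref{t:spectral-seq} over $\mathbb{Q}$, reduce everything to the observation that a nonzero differential forces $rN_L-1=n$ (hence $N_L\mid n+1$), and invoke $i_L(e_L)=[L]$ to exclude the vanishing outcome when $[L]\neq 0$. Your explicit $t$-equivariance argument identifying all components of $d^{r_0}$ with a single scalar --- which makes the final dichotomy (collapse at $E^1$ versus total cancellation) airtight --- is a point the paper leaves implicit, and is a worthwhile addition.
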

Note that the isomorphisms in~(i) might not be canonical in case $N_L
| n$ (for more on this phenomenon see~\S 4.5
in~\cite{Bi-Co:rigidity}).

\begin{proof}
   The proof is based on the spectral sequence of~\S\ref{sb:spec-seq}
   and on Theorem~\ref{t:spectral-seq}.

   Before we start recall that $N_L$ must be even since $L$ is
   orientable.

   Assume that $n$ is even. Then $E^1_{p,q} = 0$ if $p +q =$ odd,
   since $N_L$ is even. Thus for $r \geq 1$ the higher differentials
   $d^r: E^r_{p,q} \to E^r_{p-r,q + r - 1}$ all vanish, hence
   $E^1_{p,q} = E^{\infty}_{p,q}$. This gives us $QH_*(L;
   \Lambda_{\mathbb{Q}}) \cong H_*(L; \mathbb{Q}) \otimes
   \Lambda_{\mathbb{Q}}$.

   Assume now that $n$ is odd. If $p + q = $ odd, then the only
   non-trivial terms in $E^1_{p,q}$ are
   \[
   E^1_{p,q} = H_n(L; \mathbb{Q}) \otimes P_{p N_L},
   \]
   where $p + q = n + p N_L$. If $p + q =$ even, then the only
   non-trivial terms are
   \[
   E^1_{p,q} = H_0(L; \mathbb{Q}) \otimes P_{p N_L},
   \]
   where $p + q = p N_L$. Now for degree reasons the maps $d^r:
   E^r_{p,q} \to E^r_{p-r,q+r-1}$ are $0$ if $p + q =$ odd, since
   either $E^r_{p,q} = 0$ or $E^r_{p-r,q+r-1} = 0$ or both are
   trivial. It remains to consider the maps $d^r: E^r_{p,q} \to
   E^r_{p-r,q+r-1}$ for $p + q =$ even.

   We now assume that $N_L \centernot| n + 1$. Then $d^1: H_0(L;
   \mathbb{Q}) \otimes P_{pN_L} \to H_{N_L-1}(L; \mathbb{Q}) \otimes
   P_{(p-1)N_L}$ and the assumption implies that this operator is $0$.
   By the same reasoning the higher differentials $d^r$ vanish for all
   $r \geq 2$. Thus we obtain $QH_*(L; \Lambda_{\mathbb{Q}}) \cong
   H_*(L; \mathbb{Q}) \otimes \Lambda_{\mathbb{Q}}$.

   Assume $N_L \,|\, n + 1$ and $[L] \neq 0$. Since $i_L(e_L) = [L]
   \neq 0$, this implies that $e_L \in QH_n(L; \Lambda_{\mathbb{Q}})$
   is non-zero and hence not a boundary (we are using $\mathbb{Q}$ as
   our ground ring). Therefore the operators $d^r$ must vanish for all
   $r \geq 1$. We obtain the desired isomorphism.

   In the case $N_L \,|\, n + 1$ and $[L] = 0$ there exists either an
   $r \geq 1$ such that $d^r$ is non-zero or $d^r$ is always zero.
   This corresponds to both cases in the assertion.
\end{proof}

\begin{rem}
   In Proposition~\ref{p:HF-lag-spheres} the case $N_L \,|\, n + 1$
   and $[L] = 0$ leads to two possibilities for $QH(L;
   \Lambda_{\mathbb{Q}})$. One can distinguish between them by
   counting the algebraic number of pseudo-holomorphic disks of Maslov
   index $n + 1$ through two generic points of $L$. If this number is
   $0$ then $QH(L; \Lambda_{\mathbb{Q}}) \cong H_*(L;\mathbb{Q})
   \otimes \Lambda_{\mathbb{Q}}$, otherwise $QH(L;
   \Lambda_{\mathbb{Q}})$ vanishes.
\end{rem}

\def\cprime{$'$}


\end{document}